\newcommand{\arr}{\longrightarrow}
\newcommand{\mapdown}[1]%
{\Big\downarrow\rlap{$\vcenter{\hbox{$\scriptstyle#1$}}$}}
\newcommand{\wt}{\widetilde}
\newcommand{\Gr}{\mathfrak{G}}
\newcommand{\M}{\mathcal{M}}
\newcommand{\be}{\mathsf{s}}
\newcommand{\en}{\mathsf{r}}
\newcommand{\X}{\mathcal{X}}
\newcommand{\R}{\mathbb{R}}
\newcommand{\Z}{\mathbb{Z}}
\newcommand{\dad}{\mathop{\mathrm{dad}}}
\newcommand{\diam}{\mathop{\mathrm{diam}}}
\newcommand{\alb}{\mathsf{X}}
\newcommand{\xs}{\alb^*}
\newcommand{\xo}{\alb^\omega}
\newcommand{\xmo}{\alb^{-\omega}}
\newcommand{\nuke}{\mathcal{N}}
\newcommand{\img}[1]{\mathop{\mathrm{IMG}\left(#1\right)}}
\newcommand{\til}{\mathcal{T}}
\newcommand{\lims}[1][G]{\mathcal{J}_{#1}}
\newcommand{\limg}[1][G]{\mathcal{X}_{#1}}
\newcommand{\bim}{\mathfrak{B}}
\newcommand{\symm}{\mathsf{Symm}}
\newcommand{\si}{\mathsf{s}}
\newtheorem{theorem}{Theorem}[section]
\newtheorem{lemma}[theorem]{Lemma}
\newtheorem{proposition}[theorem]{Proposition}
\theoremstyle{definition}
\newtheorem{defi}[theorem]{Definition}
\newtheorem{example}[theorem]{Example}
\title{Self-similar groups and topological dimension}
\author{Volodymyr Nekrashevych}
\begin{document}
\begin{abstract}
We give various characterizations of the covering dimension of the limit space of a contracting self-similar group. In particular, we show that it is equal to the minimal dimension of a contracting affine model, to the asymptotic dimension of the orbital graphs of its action on the boundary of the tree, to the dynamical asymptotic dimension of its groupoid of germs, and give a combinatorial description of the dimension in terms of a partition of the levels of the tree. We analyze separately the one-dimensional example. In particular, we show that the limit space is one-dimensional if and only if the group is the faithful quotient of a self-similar contracting virtually free group.
\end{abstract}

\maketitle
\tableofcontents
\section{Introduction}

A faithful action of a group $G$ on the set $\xs$ of finite words over a finite alphabet $\alb$ is called \emph{self-similar} if for every $g\in G$ and every $v\in\xs$ there exists $h\in G$ such that 
\[g(vw)=g(v)h(w)\]
for all $w\in\xs$. The element $h$ is called the \emph{section} of $g$ in $v$ and is denoted $g|_v$.

We say that the group is \emph{contracting} if asymptotically (i.e., for long $g$ and long $v$) the element $g|_v$ is shorter than $g$ (e.g., in terms of the length with respect to a finite generating set of $G$).

Every locally expanding covering map $f:\mathcal{J}\arr\mathcal{J}$ of a compact metric space is uniquely encoded by the associated \emph{iterated monodromy group} $\img{f}$, which is a contracting self-similar group in a natural way.

Conversely, one can associate with every contracting self-similar group its \emph{limit dynamical system}, which is the original expanding covering map if the group is the iterated monodromy group. In general, one has to consider expanding covering maps of \emph{orbispaces}: spaces locally represented as quotients by actions of finite groups. Then we get a functorial bijection between expanding covering maps and contracting self-similar groups, up to natural equivalences. For more on the theory of iterated monodromy groups and expanding covering maps, see~\cite{nek:book,nek:dyngroups}.

Expanding covering maps have rich geometric structures (see~\cite{haisinskypilgrim}, for example). One of natural invariants is the topological  dimension of the space, which is always finite in this case. 
The aim of this paper is to show how the topological dimension of the limit dynamical system can be interpreted in terms of the self-similar group.

By the results of~\cite{nek:models}, every expanding covering map $f:\mathcal{J}\arr\mathcal{J}$ can be approximated by iterations of a \emph{contracting correspondence} $\tilde f, \iota:\Gamma_1\arr\Gamma$, where $\Gamma$ is a simplicial complex (\emph{complex of groups} in general), $\tilde f$ is a finite degree covering map, and $\iota$ is a piecewise affine map, such that $\tilde f$ is a local isometry and $\iota$ is strictly contracting. We define then the complex $\Gamma_n$ as the space of $n$-tuples $(x_1, x_2, \ldots, x_n)$ such that $\tilde f(x_i)=\iota(x_{i+1})$. It is shown in~\cite{nek:models} that if a naturally defined \emph{iterated monodromy group} of $\tilde f, \iota:\Gamma_1\arr\Gamma$ is the same as the iterated monodromy group of $f$, then the inverse limit of the maps $\iota_n:\Gamma_{n+1}\arr\Gamma_n: (x_1, x_2, \ldots, x_{n+1})\mapsto (x_1, x_2 \ldots, x_n)$ is homeomorphic to $\mathcal{J}$. Moreover, the map $f:\mathcal{J}\arr\mathcal{J}$ is topologically conjugate to the map on the limit induced by the maps $\tilde f_n:(x_1, x_2, \ldots, x_{n+1})\mapsto (x_2, x_3, \ldots, x_{n+1})$. In fact, the map $f:\mathcal{J}\arr\mathcal{J}$ is topologically conjugate to the limit dynamical system of the iterated monodromy group of the correspondence.
We call such a correspondence $\tilde f, \iota:\Gamma_1\arr\Gamma$ a \emph{contracting model} of the covering map.

The first result of the paper is Theorem~\ref{th:contractingmodeldim} stating that the minimal dimension of a contracting model of an expanding covering map $f:\mathcal{J}\arr\mathcal{J}$ is equal to the topological dimension of $\mathcal{J}$. 

The second result (Theorem~\ref{th:dad}) is equality between the topological dimension of the limit space of a contracting group $G$ and the \emph{dynamical asymptotic dimension} of its action on the boundary $\xo$ of the tree (more precisely of the groupoid of germs of the action). The notion of dynamical asymptotic dimension (d.a.d.) was developed in~\cite{GWY:dad} as a tool for studying cross-product $C^*$-algebras and their classification using $K$-theory.

The definition of the dynamical asymptotic dimension of the groupoid of germs of the action of $G$ on $\xo$ can be easily reformulated in terms of existence of a partition of a level $\alb^n$ of the tree $\xs$ into subsets satisfying a finiteness condition for the action of $G$. This reformulation is also included in the formulation of Theorem~\ref{th:dad}. It gives a purely combinatorial method of finding upper bounds on the topological dimension of the limit space.

The relation between the groupoid of germs of the action of $G$ on $\xo$ and the limit dynamical system of $G$ is an example of a \emph{hyperbolic duality}, see~\cite{nek:hyperbolic}.  One of its aspects is that large-scale properties of the action of $G$ on $\xo$ are related to the topological properties of the limit space. Third result of the paper (Theorem~\ref{th:asdym}) is an example of this philosophy: we show that the topological dimension of the limit space is equal to the asymptotic dimension of the orbits of the action of $G$ on $\xo$ (seen as graphs with respect to a finite generating set of $G$). It is analogous to the result of~\cite{BuyaloLebedeva} that topological dimension of the boundary of a Gromov-hyperbolic group is equal to the asymptotic dimension of the group minus one. The analog of a hyperbolic group in the context of contracting self-similar group $G$ is the groupoid of generated by the germs of the action of $G$ on the boundary $\xo$ of the tree $\xs$ and by the germs of the one-sided shift. (The shift is responsible for absence of ``minus one'' in our result compared to the case of hyperbolic groups.) 

Theorem~\ref{th:contractingmodeldim} implies that the condition of being contracting for a self-similar finitely generated group $G$ follows from finitely many relations, even when the group is not finitely presented. This was shown earlier in~\cite{nek:book}, but Theorem~\ref{th:contractingmodeldim} can be seen as a stronger version of this result. We call the corresponding finitely presented contracting group (such that $G$ is its quotient by the kernel of the action on the rooted tree $\xs$) a \emph{finitely presented contracting overgroup} of $G$.

A direct corollary of Theorem~\ref{th:contractingmodeldim} is the fact that in the case of a one-dimensional limit space the contracting overgroup can be chosen to be virtually free. We show the converse in Theorem~\ref{th:onedimdescription}, thus proving that the limit space is one-dimensional if and only if the group has a virtually free contracting overgroup. 

A corollary of Theorem~\ref{th:onedimdescription} is a complete description of contracting faithful \emph{finitely presented} groups with locally connected one-dimensional limit space: they are equivalent to iterated monodromy groups of $z^d$ for $|d|\ge 2$, or Chebyshev polynomials $\pm T_d$.

The only known examples of faithfully acting finitely presented contracting groups are the virtually nilpotent ones (iterated monodromy groups of expanding self-coverings of infra-nil-orbifolds, see~\cite{nek:book}).

The last section of the article presents some examples of contracting groups with one-dimensional limit space. It is interesting that all classical and well-studied self-similar groups (the Grigorchuk group~\cite{grigorchuk:80_en}, Gupta-Sidki group~\cite{gupta-sidkigroup}, Fabrikowski-Gupta group~\cite{gufabr}, Brunner-Sidki-Vieira group~\cite{bsv:jns}, Basilica group~\cite{zukgrigorchuk:3st}, e.t.c.) have one-dimensional limit space. Some higher-dimensional examples where studied in~\cite{nek:dendrites,nek:paperfold}.

The author acknowledges the support of the Institut Henri Poincar\'e and its special trimester ``Groups Acting on Fractals, Hyperbolicity and Self-similarity,'' where the paper was finished, and the NSF grant DMS2204379.

\section{Basic definitions and results}

We present in this section an introduction to the theory of self-similar contracting groups, in particular in order to fix the notations. For more, including proofs, see~\cite{nek:book,nek:dyngroups}.

\subsection{Self-similar groups}

\begin{defi}
A \emph{self-similar group} $(G, \bim)$ is a group $G$ together with a set $\bim$ on which $G$ acts by commuting left and right actions such that the right action is free and has a finite number of orbits. The set $\bim$ is called then the \emph{biset} of the self-similar group.
\end{defi}

Recall, that an action is called free if for any point $x$ and a group element $g$ an equality $x\cdot g=x$ implies that $g$ is the identity.

Two $G$-bisets $\bim_1, \bim_2$ are \emph{isomorphic} if there exists a bijection $F:\bim_1\arr\bim_2$ such that $F(g_1\cdot x\cdot g_2)=g_1\cdot F(x)\cdot g_2$ for all $x\in\bim_1$ and $g_1, g_2\in G$.

Two self-similar groups $(G_i, \bim_i)$, $i=1, 2$, are \emph{equivalent} if there exists a group isomorphism $\phi:G_1\arr G_2$ and a bijection $F:\bim_1\arr\bim_2$ such that $F(g_1\cdot x\cdot g_2)=\phi(g_1)\cdot F(x)\cdot \phi(g_2)$ for all $g_1, g_2\in G_1$ and $x\in\bim_1$.

A \emph{basis} of a biset $\bim$ is a set $\alb\subset\bim$ intersecting every right $G$-orbit exactly once. Then every element $y\in\bim$ can be written in a unique way as $y=x\cdot g$ for some $x\in\alb$ and $g\in G$.

Let $\alb\subset\bim$ be a basis. Then for every $g\in G$ there exists a permutation $\sigma_g\in\symm(\alb)$ and a map $\alb\arr G:x\mapsto g|_x$ such that
\[g\cdot x=\sigma_g(x)\cdot g|_x\]
for every $x\in\alb$. It is easy to check that the map $g\mapsto (\sigma_g, (g|_x)_{x\in\alb})$ is a homomorphism from $G$ to the wreath product $\symm(\alb)\ltimes G^{\alb}$. We call it the \emph{wreath recursion} associated with the biset and its basis. Every homomorphism from $G$ to the wreath product $\symm(\alb)\ltimes G^{\alb}$ arises in this way and thus defines a self-similar group. Changing the basis corresponds to composing the wreath recursion with an inner automorphism of the wreath product.

If $\bim_1, \bim_2$ are two $G$-bisets (i.e., sets with commuting right and left actions of $G$), then we denote by $\bim_1\otimes\bim_2$ the direct product $\bim_1\times\bim_2$ modulo the left action of $G$ given by $(x_1, x_2)\mapsto (x_1\cdot g^{-1}, g\cdot x_2)$. We write the class (the orbit) of an element $(x_1, x_2)$ by $x_1\otimes x_2$. The set $\bim_1\otimes\bim_2$ is naturally a biset with respect to the actions $g_1\cdot (x_1\otimes x_2)\cdot g_2=(g_1\cdot x_1)\otimes (x_2\cdot g_2)$. It is also easy to show that the bisets $(\bim_1\otimes\bim_2)\otimes\bim_3$ and $\bim_1\otimes(\bim_2\otimes\bim_3)$ are isomorphic.

In particular, if $(G, \bim)$ is a self-simiar group, then for every natural number $n$ we have a biset $\bim^{\otimes n}$, i.e., a self-similar group $(G, \bim^{\otimes n})$. We also define $\bim^{\otimes 0}=G$ with the natural left and right $G$-actions.

If $\alb$ is a basis of $\bim$, then $\alb^n=\{x_1\otimes x_2\otimes\cdots\otimes x_n\;:\;x_i\in\alb\}$ is a basis of $\bim^{\otimes n}$. We get therefore the associated wreath recursion $G\arr\symm(\alb^n)\ltimes G^{\alb^n}$, and hence an action of $G$ on $\alb^n$ (via the homomorphism from $\symm(\alb^n)\ltimes G^{\alb^n}$ to $\symm(\alb^n)$). These actions, taken together, define an action of $G$ on the \emph{rooted tree} $\xs=\bigcup_{n\ge 0}\alb^n$. We consider the set of finite words $\xs$ to be a tree with the root equal to the empty word (the basis of $\bim^{\otimes 0}$) in which a word $v$ is connected by an edge to the words of the form $vx$ for $x\in\alb$.

One can define the tree $\xs$ and the $G$ action on it also without choosing a basis. Namely, the tree $\xs$ is isomorphic to the tree with the set of vertices equal to the set of orbits $\bim^*/G$ of the right action of $G$ on the disjoint union $\bim^*$ of the bisets $\bim^{\otimes n}$ for all $n\ge 0$. The left action of $G$ on $\bim^*$ induces then an action on $\bim^*/G$. The tree structure is induced from the right divisibility relation on the semigroup $\bim^*$: the orbit of $v\in\bim^{\otimes n}$ is connected to the orbits of elements of the form $v\otimes x\in\bim^{\otimes n}$ for $x\in\bim$.

We will denote by $g(v)$, for $v\in\xs$ and $g\in G$, the image of $v$ under the action of $G$ on $\xs$. We also denote by $g|_v$ the unique element of $G$ such that 
\[g\cdot v=g(v)\cdot g|_v.\]

The elements $g|_v$ are called \emph{sections} of $g$. (They depend on the choice of $\alb$.) They satisfy the following conditions:
\[g|_{v_1v_2}=(g|_{v_1})|_{v_2},\qquad (g_1g_2)|_v=g_1|_{g_2(v)}g_2|_v.\]

For a given $x\in\bim$, the \emph{associated virtual endomorphism} $\phi_x$ is the homomorphism given by the condition
\[g\cdot x=x\cdot \phi_x(g)\]
defined on the subgroup of elements of $G$ such that $g\cdot x$ and $x$ belong to the same right $G$-orbit. If $x$ is an element of a basis $\alb$, then the domain of $\phi_x$ is the stabilizer of the vertex $x$ of the rooted tree $\xs$, and the virtual endomorphism is the section map $g\mapsto g|_x$. The virtual endomorphism $\phi_x$ (unlike the section map) depends only on $x$.

\begin{defi}
The \emph{faithful quotient} of a self-similar group $(G, \bim)$ is the group $(G/K, \bim/K)$, where $K\lhd G$ is the kernel the action of $G$ on the tree $\xs$, and $\bim/K$ is the set of right $K$-orbits with the natural structure of a $G/K$-biset.
\end{defi}

\begin{defi}
We say that $(G, \bim)$ is \emph{level-transitive} if the action of $G$ on each level of the associated rooted tree is transitive. i.e., if the left action of $G$ on the set of right orbits $\bim^{\otimes n}/G$ is transitive for every $n$.

We say that a self-similar group $(G, \bim)$ is \emph{self-replicating} if the left action of $G$ on $\bim$ is transitive. Equivalently, it is self-replicating if the action of $G$ on the first level of the tree $\xs$ is transitive, and for any $x\in\alb$ the virtual endomorphism $g\mapsto g|_x$ from the stabilizer of $x$ to $G$ is surjective.
\end{defi}

Every self-replicating self-similar group is level-transitive.

\begin{example}
Consider the wreath recursion $a\mapsto\sigma(1, b), b\mapsto (1, a)$ over the free group generated by $a$ and $b$. Here $\sigma$ is the non-trivial element $(01)$ of the symmetric group of permutations of the alphabet $\{0, 1\}$. We do not write trivial elements of the symmetric groups (for example, $b$ acts trivially on the alphabet). We denote by $1$ the trivial element of the self-similar group. We have $a^2\mapsto (b, b)$, $b\mapsto (1, a)$, $a^{-1}ba\mapsto (a, 1)$. It follows that the group is self-replicating.

The faithful quotient of this self-similar group is not free. For example, the elements $b$ and $a^{-1}ba$ commute in the faithful quotient, since  the commutator is contained in the kernel of the wreath recursion. We usually write $a=\sigma(1, b)$, $b=(1, a)$ when defining the faithful quotient, thus identifying the group with its image in the wreath product. (The wreath recursion is injective for faithful self-similar groups.)
\end{example}

\subsection{Contracting groups and their models}

\begin{defi}
A self-similar group $(G, \bim)$ is said to be \emph{contracting} if for some basis $\alb\subset\bim$ there exists a finite set $\nuke\subset G$ such that for every $g\in G$ there exists $n$ such that $g|_v\in\nuke$ for all $v\in\xs$ of length at least $n$.
\end{defi}

One can show, see~\cite{nek:book}, that if a self-similar group is contracting with respect to some basis $\alb$, then it is contracting with respect to every basis.
The smallest set $\nuke$ satisfying the above definition is called the \emph{nucleus} of the group (it depends on the choice of the basis $\alb$).

It is also shown that if $G$ is finitely generated, then it is contracting if and only if there exist $C>1$ and $\lambda\in (0, 1)$ such that for all $g\in G$ and all long enough words $v\in\xs$ we have
\[\|g|_v\|\le C\lambda^{|v|}\|g\|,\]
where $\|g\|$ denotes the length of $g$ with respect to a fixed finite generating set of $G$.

Suppose that $G$ is generated by a finite set $S$. Then a finite set $N\ni 1$ contains the nucleus if and only if there exists $n$ such that for every $g\in N, s\in S, v\in\alb^n$ the section $(gs)|_v$ belongs to $N$. (We can also replace $(gs)|_v$ by $(sg)|_v$ in this condition.)

Let $(G, \bim)$ be a contracting self-similar group, and suppose that $\X$ is a topological space on which $G$ acts (from the right) properly and co-compactly by homeomorphisms.

Define $\X\otimes\bim$ as the quotient of the direct product $\X\times\bim$ by the action $g:(\xi, x)\mapsto (\xi\cdot g^{-1}, g\cdot x)$ of $G$, where $\bim$ has discrete topology. Then $\X\otimes\bim$ is also a proper co-compact right $G$-space.

We are interested in $G$-equivariant continuous maps $I:\X\otimes\bim\arr\X$.

Any such a map is uniquely determined by the maps $I_x:\X\arr\X$, for $x\in\alb$, given by $I_x(\xi)=I(\xi\otimes x)$.
The maps $I_x$ agree with the associated wreath recursion in the following way:
\begin{equation}
\label{eq:Ixcompatible}
I_x(\xi\cdot g)=I_{g(x)}(\xi)\cdot g|_x
\end{equation}
for all $x\in\alb$, $g\in G$, $\xi\in\X$.

Conversely, any collection of continouous maps $I_x:\X\arr\X$ satisfying the above condition defines an equivariant map $I:\X\otimes\bim\arr\X$ by the condition $I(\xi\otimes x\cdot g)=I_x(\xi)\cdot g$ for $x\in\alb$ and $g\in G$.

Let $I:\X\otimes\bim\arr\X$ be a $G$-equivariant map. Denote by $\M$ and $\M_1$ the spaces of orbits $\X/G$ and $(\X\otimes\bim)/G$, respectively. Then the correspondences:
\[\xi\otimes x\mapsto\xi\]
and
\[\xi\otimes x\mapsto I(\xi\otimes x)\]
are well defined continuous maps $\M_1\arr\M$, which we will denote by $f$ and $\iota$, respectively. 

\begin{defi}
A \emph{covering correspondence} is a pair of continuous maps $f, \iota:\M_1\arr\M$, where $f$ is a finite degree covering. We call $f$ and $\iota$ the \emph{covering} and the \emph{reduction maps}, respectively.
\end{defi}

If $f, \iota:\M_1\arr\M$ is a covering correspondence and $\M$ is path-connected, then we can construct a natural biset over the fundamental group $\pi_1(\M)$ in the following way. Choose a basepoint $t\in\M$, and define $\bim$ as the set of pairs $(z, [\ell])$, were $z\in f^{-1}(t)$, and $\ell$ is the homotopy class of a path $\ell$ in $\M$ starting in $t$ and ending in $\iota(z)$. 

The right action of $\pi_1(\M, t)$ on $\bim$ is defined just by appending loops: $(z, [\ell])\cdot [\gamma]=(z, [\ell\gamma])$, where we multiply paths in an ``unnatural way'' as functions ($\gamma$ is passed before $\ell$ in the concatenation $\ell\gamma$).

The left action is defined by lifting loops by the covering $f$ and then mapping them back to $\M$ by the reduction map $\iota$:
\[[\gamma]\cdot (z_1, [\ell])=(z_2, [\iota(\gamma_{z_1})\ell]),\]
where $\gamma_{z_1}$ is the lift of $\gamma$ by $f$ starting in $z_1$, and $z_2$ is the end of $\gamma_{z_1}$.

It is easy to check that $(\pi_1(\M, t), \bim)$ is a self-similar group. The associated virtual endomorphism is $\iota_*\circ f^*$ (defined up to inner automorphisms of the fundamental group).

\begin{defi}
The faithful quotient of $(\pi_1(\M, t), \bim)$ is called the \emph{iterated monodromy group} of the correspondence $f, \iota:\M_1\arr\M$.
\end{defi}

A particular case of the above definition is when the reduction map $\iota$ is the identity homeomorphism or an identical embedding. Then we do not mention the map $\iota$ and talk about the iterated monodromy group of the (partial) self-covering $f:\M_1\arr\M$ for $\M_1\subseteq\M$.

Let $f, \iota:\M_1\arr\M$ be a covering correspondence.
Suppose that $\X$ is the universal covering of $\M$. It can be seen as the space of homotopy classes of paths starting in $t$. The right action of $G$ on $\X$ is the usual one: by appending the loops at the beginning of the paths. It is checked directly then that the map
\[[\delta]\otimes (z, [\ell])\mapsto \iota(\delta_z)\ell\]
is a well defined equivariant map from $\X\otimes\bim$ to $\X$. Here $\delta_z$ is the lift of $\delta$ by $f$ to a path starting in $z$. Moreover, the corresponding maps $f, \iota:\X\otimes\bim\arr\X$, defined using this equivariant map (see above), coincide with the original maps $f, \iota:\M_1\arr\M$.

The general situation (of non-free actions of $G$ on $\X$) is interpreted in a similar way using the notion of \emph{orbispaces}, see~\cite{nek:book,nek:dyngroups}.

Suppose that $(G, \bim)$ is a self-similar group, and let $I:\X\otimes\bim\arr\X$ be a $G$-equivariant map, where $\X$ is a right proper co-compact $G$-space.

We define the spaces $\X\otimes\bim^{\otimes n}$ and the corresponding maps
\[I_n:\X\otimes\bim^{\otimes (n+1)}\arr\X\otimes\bim^{\otimes n}\]
inductively by $I_n(\xi\otimes x_1\otimes x_2\otimes\cdots\otimes x_n)=I(\xi\otimes x_1)\otimes x_2\otimes\cdots\otimes x_n$.

We denote by $I^n:\X\otimes\bim^{\otimes n}\arr\X$ the composition $I_{n-1}\circ\cdots\circ I_1\circ I$.

In the case of a $\pi_1(\M)$-equivariant map $I:\X\otimes\bim\arr\X$ associated with a covering correspondence $f, \iota:\M_1\arr\M$, the spaces of orbits $\M_n=(\X\otimes\bim^{\otimes n})/G$ are naturally homeomorphic to the space of sequences $(x_1, x_2, \ldots, x_n)\in\M_1^n$ satisfying $f(x_i)=\iota(x_{i+1})$. 

In particular, if $f:\M_1\arr\M$ is a partial self-covering (i.e., $\iota:\M_1\arr\M$ is the identical embedding of a subset $\M_1\subseteq\M$), then $\M_n$ is the domain of the $n$th iteration of $f$.

\subsection{Contracting models}

Suppose now that $\X$ is a metric space on which $G$ acts properly from the right by isometries. We allow the metric to take infinite value.

\begin{defi}
We say that a $G$-equivariant map $I:\X\otimes\bim\arr\X$ is \emph{contracting} if there exist $L>1$ and $n\ge 1$ such that for all $v\in\bim^{\otimes n}$ and $\xi_1, \xi_2\in\X$ we have
\[d(I^n(\xi_1\otimes v), I^n(\xi_2\otimes v))\le L^{-1}d(\xi_1, \xi_2).\]
\end{defi}

Since the action of $G$ on $\X$ is by isometries, it is enough to check the contraction conditions for the maps $\xi\mapsto I^n(\xi\otimes v)$, for $v\in\alb^n$, only.

\begin{example}
Let $I:\X\otimes\bim\arr\X$ be the $\pi_1(\M)$-equivariant map associated with a covering correspondence $f, \iota:\M_1\arr\M$, where $\M$ and $\M_1$ are  compact metric space, $f$ is a local isometry, and $\iota$ is a locally contracting map, as above. Then $I$ is contracting. In particular, if $\iota$ is the identity homeomorphism, and $f$ is expanding with respect to the same metric on $\M$ and $\M_1$ (identified with each other by $\iota$), then the corresponding map $I$ is contracting.
\end{example}

The following theorems are proved in~\cite{nek:models,nek:dyngroups}. 

\begin{theorem}
\label{th:limgiscontracting}
Let $(G, \bim)$ be a contracting finitely generated group. Then there exists a metric space $\limg$, a proper co-compact right action of $G$ on it by isometries, and a contracting $G$-equivariant \emph{homeomorphism} $I:\limg\otimes\bim\arr\limg$. Moreover, such space is unique: if $\X'$ and $I'$ is another space and a homeomorphism, then there exists a $G$-equivariant homeomorphism $\Phi:\limg\arr\X'$ such that $\Phi(I(\xi\otimes x))=I'(\Phi(\xi)\otimes x)$ for all $\xi\in\limg$ and $x\in\bim$. We call this space the \emph{limit $G$-space} of $(G, \bim)$.
\end{theorem}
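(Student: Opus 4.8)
The plan is to construct $\limg$ as a quotient of a space of ``backward itineraries'' together with geometric data from the group, mimicking the classical construction of the limit space $\lims$ but keeping track of the $G$-action. Concretely, I would start from \emph{any} metric space $\X_0$ with a proper cocompact right $G$-action by isometries (e.g.\ a Cayley graph of $G$, or the Rips complex of $G$ thickened into a length space) and \emph{any} $G$-equivariant continuous map $I_0:\X_0\otimes\bim\arr\X_0$; such a map exists by equivariantly extending an arbitrary choice of the finitely many values $I_0(\xi_0\otimes x)$ for $\xi_0$ in a compact fundamental domain, using the compatibility relation~\eqref{eq:Ixcompatible} to propagate the definition. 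The contraction hypothesis on $(G,\bim)$ should then force $I_0$ to be \emph{eventually} contracting in the sense above, after possibly replacing $\bim$ by $\bim^{\otimes n}$; this is where the nucleus is used, exactly as in the proof that the word metric contracts under sections.

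Next I would form the inverse system of maps $I_{0,n}:\X_0\otimes\bim^{\otimes(n+1)}\arr\X_0\otimes\bim^{\otimes n}$ introduced in the excerpt, and set $\limg$ to be (a suitable completion of) the inverse limit $\varprojlim(\X_0\otimes\bim^{\otimes n}, I_{0,n})$. The right $G$-action on each $\X_0\otimes\bim^{\otimes n}$ is by isometries for a rescaled metric $L^{-n}d$, these rescaled metrics are compatible up to the contraction estimate, and so they assemble into a genuine metric on the inverse limit; properness and cocompactness of the $G$-action pass to the limit because they hold uniformly at each finite stage (the fundamental domains are uniformly bounded after rescaling). The maps $I_n$ induce a map on the inverse limit which, by construction of the system, is the shift $(\xi\otimes x_1\otimes x_2\otimes\cdots)\mapsto(I_0(\xi\otimes x_1)\otimes x_2\otimes\cdots)$ read on coordinates, and one checks directly that this map $I:\limg\otimes\bim\arr\limg$ is a $G$-equivariant homeomorphism — injectivity and surjectivity are immediate from the inverse-limit description, and bicontinuity follows because $I$ and $I^{-1}$ are, up to the fixed scaling factor $L$, coordinate shifts. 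Contractivity of $I$ is inherited from the contractivity of $I_0$ on $\bim^{\otimes n}$.

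For uniqueness, suppose $\X'$, $I'$ is another such pair. The idea is that a $G$-equivariant continuous map $\X'\arr\X_0$ exists by the same ``extend off a fundamental domain'' argument, and then the assignment $\xi'\mapsto\bigl(\,\text{the $I_0^n$-images of }(\xi')\text{'s }I'^{-n}\text{-preimages}\,\bigr)$ produces a $G$-equivariant map $\Phi:\X'\arr\limg$ intertwining $I'$ and $I$; contractivity makes this well defined and independent of the initial choice, and running the symmetric construction gives a two-sided inverse, so $\Phi$ is the desired homeomorphism. The main obstacle I expect is \emph{not} the formal inverse-limit bookkeeping but rather showing that the metrics at the finite stages are mutually compatible enough to descend to a metric on $\limg$ that is simultaneously (i) finite-valued on each $G$-orbit's closure in the right places, (ii) proper, and (iii) makes the $G$-action cocompact — i.e.\ controlling how the (possibly infinite-valued) metric on $\X_0$ interacts with the rescalings $L^{-n}d$ as $n\to\infty$. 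This is precisely the point where one must invoke, rather than reprove, the detailed estimates of~\cite{nek:models,nek:dyngroups}; accordingly I would cite those papers for the metric analysis and present here only the structural construction and the uniqueness argument.
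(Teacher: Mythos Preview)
The paper does not prove this theorem in the text; it cites \cite{nek:models,nek:dyngroups} and supplements only with (i) an explicit description of the metric via the tile-level function $\ell$, and (ii), a few paragraphs later, the direct \emph{synthetic} construction of $\limg$ as the quotient of $\xmo\times G$ by the asymptotic equivalence relation. That quotient construction is what you should compare against: it defines $\limg$ outright, with the homeomorphism $I$ induced by the shift $\ldots x_2x_1\cdot g\otimes x\mapsto\ldots x_2x_1y\cdot h$, and only afterwards (Theorem~\ref{th:modelGspace}) is $\limg$ identified with the inverse limit of an arbitrary contracting model.

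Your plan reverses this logical order, building $\limg$ as an inverse limit of $\X_0\otimes\bim^{\otimes n}$, and this requires already having a contracting $I_0$ on $\X_0$. The gap is the sentence ``the contraction hypothesis \ldots\ should then force $I_0$ to be eventually contracting.'' For the natural choice $\X_0=\Delta_S$ (the Cayley complex) with $I_0$ induced by the section map $g\mapsto g|_x$, this is false: the section map is simplicial, hence $1$-Lipschitz, but it is \emph{not} strictly contracting even after passing to $\bim^{\otimes n}$ --- adjacent vertices lying in the nucleus can stay adjacent under all sections. The paper records exactly this subtlety: the unlabelled theorem quoted from \cite{nek:models} says only that $I^m$ is $G$-equivariantly \emph{homotopic} to a contracting map, and producing that homotopy is genuine work (in this paper it reappears as the proof of Theorem~\ref{th:contractingmodeldim}, which already uses $\limg$ and Lemma~\ref{lem:contractioncompact}). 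So your inverse-limit route either needs an independent, non-circular construction of a contracting model on some $\X_0$, or must ultimately fall back on the synthetic quotient description the paper presents. Your uniqueness sketch, by contrast, is along the right lines and matches the standard argument.
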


Let us show how the metric on $\limg$ is defined, since the metric given in~\cite[Proposition~4.5.30]{nek:dyngroups} is defined only locally in the case when $\limg$ is not connected (i.e., it may be infinite between points in different connected components). Let $\ell(\xi_1, \xi_2)$ be the function $\limg\times\limg\to\{-\infty\}\cup\mathbb{N}$ given by the condition that $\ell(\xi_1, \xi_2)$ is the largest $n$ such that there exist tiles $\til\otimes v_1, \til\otimes v_2$ of level $n$ such that $\xi_i\otimes v_i\ni\xi_i$ and $\til\otimes v_1\cap\til\otimes v_2\ne\emptyset$. If such tiles do not exist, then we set $\ell(\xi_1, \xi_2)=-\infty$. Note that for any two points $\xi_1, \xi_2\in\limg$ there exists $n$ such that $\ell(\xi_1\otimes v, \xi_2\otimes v)>-\infty$ for all $v\in\bim^{\otimes n}$.

Fix also a word metric $d_S$ on $G$. 
Let $\alpha>0$ and let \[d_\alpha(\xi_1, \xi_2)=\min\{e^{-\alpha\ell(\xi_1, \xi_2)}, d_S(g_1, g_2)\},\] where $g_1, g_2$ are elements of $G$ such that $\xi_i\in\til\cdot g_i$ (we can choose an arbitrary pair of such elements for every pair of points). Then, for all $\alpha$ small enough, the function 
\[d(\xi, \zeta)=\min_{\eta_0=\xi, \eta_1, \ldots, \eta_n=\zeta}\sum_{i=0}^{n-1}d_\alpha(\eta_i, \eta_{i+1})\]
is a metric satisfying the conditions of Theorem~\ref{th:limgiscontracting}. 

\begin{theorem}
\label{th:modelGspace}
If $I:\X\otimes\bim\arr\X$ is an arbitrary contracting equivariant map, then the inverse limit of the spaces $\X\otimes\bim^{\otimes n}$ with respect to the maps $I_n$ is homeomorphic to the limit $G$-space $\limg$. Moreover, the homeomorphism of $\lim_{\longleftarrow}\X\otimes\bim^{\otimes n}$ with $\limg$ conjugates the limit of the natural homeomorphisms $\X\otimes\bim^{\otimes n}\otimes\bim\arr\X\otimes \bim^{\otimes (n+1)}$ with the canonical homemorphism $\limg\otimes\bim\arr\limg$.
\end{theorem}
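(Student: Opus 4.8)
The plan is to compare the inverse limit $\varprojlim(\X\otimes\bim^{\otimes n}, I_n)$ with $\limg$ by building a continuous equivariant map between them using the universal property of $\limg$ furnished by Theorem~\ref{th:limgiscontracting}, and then showing this map is a homeomorphism by a direct contraction/compactness argument. First I would recall that a point of the inverse limit is a sequence $(\xi_0\otimes v_0, \xi_1\otimes v_1, \ldots)$ — more precisely a coherent sequence where the $n$-th term lies in $\X\otimes\bim^{\otimes n}$ and $I_n$ sends the $(n+1)$-st term to the $n$-th. Since the $G$-action on $\X$ is proper and cocompact and the $I_n$ are equivariant, the inverse limit carries a natural proper cocompact right $G$-action, and the shift maps $(\X\otimes\bim^{\otimes n})\otimes\bim\arr\X\otimes\bim^{\otimes(n+1)}$ assemble into an equivariant self-homeomorphism $J$ of the inverse limit. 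The first real step is to check that $J$ is \emph{contracting} in the required sense: this is where the hypothesis that $I$ is contracting gets used, because the contraction estimate $d(I^n(\xi_1\otimes v), I^n(\xi_2\otimes v))\le L^{-1}d(\xi_1,\xi_2)$ passes to the inverse limit (with an appropriate metric on it, e.g.\ $\sum 2^{-n} d_n$ truncated à la the construction preceding the theorem, or the analogous $\ell$-function description) and forces the tail of a coherent sequence to determine the point exponentially fast.

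Once the inverse limit is equipped with a proper cocompact isometric $G$-action and a contracting equivariant homeomorphism $J$ onto itself, the uniqueness clause of Theorem~\ref{th:limgiscontracting} applies verbatim: there is a $G$-equivariant homeomorphism $\Phi:\limg\arr\varprojlim\X\otimes\bim^{\otimes n}$ intertwining $I$ with $J$, i.e.\ $\Phi(I(\xi\otimes x))=J(\Phi(\xi)\otimes x)$. This simultaneously gives the homeomorphism asserted in the first sentence of the theorem and the conjugacy of the canonical homeomorphism $\limg\otimes\bim\arr\limg$ with the limit of the shift maps asserted in the second sentence, since the latter is exactly $J$ by construction. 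So modulo the verification that the inverse limit genuinely satisfies the hypotheses of Theorem~\ref{th:limgiscontracting}, the whole statement is a formal consequence of that theorem's uniqueness.

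The main obstacle is therefore entirely in the first step: producing a \emph{metric} on $\varprojlim\X\otimes\bim^{\otimes n}$ for which (i) the $G$-action is by isometries, (ii) the action is proper and cocompact, and (iii) $J$ is contracting. Cocompactness should follow from cocompactness of each $\X\otimes\bim^{\otimes n}$ together with the contraction (so that a compact fundamental domain at a single high level, pulled back, works); properness is more delicate when the metric on $\X$ takes infinite values between connected components, and I expect to handle it by mimicking the path-metric construction given just after Theorem~\ref{th:limgiscontracting} — define an $\ell$-type function on the inverse limit measuring the last level at which two coherent sequences have intersecting tiles, combine it with the word metric $d_S$ on $G$ via a $\min$, and then take the induced length metric, checking for $\alpha$ small that this is finite-valued within components and that $J$ scales the $\ell$-function by $+1$, hence is contracting by a factor $e^{-\alpha}$. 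An alternative route, avoiding building the metric by hand, is to quote~\cite{nek:models,nek:dyngroups} where Theorem~\ref{th:limgiscontracting} is proved: the limit space is typically \emph{constructed} there as precisely such an inverse limit for one particular choice of $(\X, I)$, so the content here is the independence of the inverse limit from the choice of contracting model, which again is exactly the uniqueness statement. Either way, no genuinely new argument is needed beyond bookkeeping with the tile/level structure and the contraction constant.
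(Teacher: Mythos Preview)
The paper does not actually prove Theorem~\ref{th:modelGspace}: it is stated, together with Theorem~\ref{th:limgiscontracting}, as a result imported from~\cite{nek:models,nek:dyngroups}, with no argument given in the paper itself. So there is no in-paper proof to compare your proposal against.

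That said, your strategy is the correct one and matches how the result is organized in the cited sources: once Theorem~\ref{th:limgiscontracting} (existence and uniqueness of $\limg$) is in hand, Theorem~\ref{th:modelGspace} is precisely the statement that any contracting model produces, via the inverse limit, another space satisfying the hypotheses of the uniqueness clause. Your identification of the shift map $J$ as the candidate contracting homeomorphism on $\varprojlim\X\otimes\bim^{\otimes n}$ is right, and it is indeed a homeomorphism because tensoring on the right by $x\in\bim$ gives a level-by-level identification of the truncated inverse system $(\X\otimes\bim^{\otimes n})_{n\ge 0}$ with $(\X\otimes\bim^{\otimes n})_{n\ge 1}$, and dropping an initial term of an inverse system does not change the limit.

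The only point where you should be a bit more careful is the metric. Your proposed $\ell$-function construction on the inverse limit works, but note that in the cited references the argument is usually run the other way around: one first constructs $\limg$ synthetically (as the quotient of $\alb^{-\omega}\times G$ described just after Theorem~\ref{th:modelGspace} in the paper), and then shows that for any contracting model $(\X,I)$ the map sending a coherent sequence to the corresponding point of $\limg$ is a well-defined equivariant homeomorphism. This avoids having to manufacture a metric on the inverse limit directly; the contraction hypothesis on $I$ is used only to show that the induced map from the inverse limit to $\limg$ is injective (two coherent sequences with the same image in $\limg$ must have their level-$n$ components at distance going to zero, hence equal). Your second ``alternative route'' paragraph essentially says this, and it is the cleaner way to finish.
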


In view of the above theorem, we call a contracting map $I:\X\otimes\bim\arr\X$ the \emph{model} of the limit $G$-space or the \emph{contracting model} of $(G, \bim)$.

The limit $G$-space $\limg$ can be constructed synthetically in the following way. Choose a basis $\alb$ of $\bim$, and let $\xmo$ be the space of left-infinite sequences $\ldots x_2x_1$ of elements of $\alb$ with the direct product topology. Consider the direct product $\xmo\times G$, where $G$ is discrete. We say that $\ldots x_2x_1\cdot g$ and $\ldots y_2y_1\cdot h$ are \emph{asymptotically equivalent}, if there exists a sequence $g_n$ of elements of a finite subset of $G$ such that 
\[g_n\cdot x_n\ldots x_2x_1\cdot g=y_n\ldots y_2y_1\cdot h\]
in $\bim^{\otimes n}$ for every $n$. Equivalently, we may assume that $g_n$ belong to the nucleus and $g_n\cdot x_n=y_n\cdot g_{n-1}$, for all $n\ge 1$, while $g_0g=h$.

The quotient of $\xmo\times G$ by this equivalence relation is the limit $G$-space $\limg$. The homeomorphism $I:\limg\otimes\bim\arr\bim$ is induced by the natural map
\[\ldots x_2x_1\cdot g\otimes x=\ldots x_2x_1y\cdot h\]
for $y\in\alb$ and $h\in G$ such that $g\cdot x=y\cdot h$. In view of this identification, we will usually omit $I$, identify $\limg\otimes\bim$ with $\limg$, and write $I(\xi\otimes x)$ just as $\xi\otimes x$ (but we will do it only for the limit $G$-space $\limg$).

We will denote the space of orbits $\limg/G$ by $\lims$. It follows from the above description of $\limg$ that $\lims$ can be defined as the quotient of $\xmo$ by the relation identifying $\ldots x_2x_1$ and $\ldots y_2y_1$ if there exists a sequence $g_n$ of elements of the nucleus such that $g_n(x_n\ldots x_2x_1)=y_n\ldots y_2y_1$ for all $n$.

The image of $\xmo\cdot 1\subset\xmo\times G$ in $\limg$ is denoted $\til$ and is called the \emph{tile} of the limit $G$-space. It is a compact set depending on the choice of the basis $\alb$. We have $\limg=\bigcup_{g\in G}\til\cdot g$.

The following theorem is proved in~\cite{nek:book}.

\begin{theorem}
\label{th:connectivity}
Let $(G, \bim)$ be a finitely generated contracting group.
The space $\lims$ is connected if and only if $(G, \bim)$ is level-transitive. The space $\limg$ is connected if and only if $(G, \bim)$ is self-replicating. If $(G, \bim)$ is self-replicating, then $\limg$ is locally connected.
\end{theorem}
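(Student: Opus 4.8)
The plan is to reduce the whole statement to combinatorics of the \emph{tiles}. Fix a basis $\alb\subset\bim$; let $\til\subset\limg$ be the tile, and for $v\in\alb^n$ let $\til\otimes v\subset\til$ be the corresponding level-$n$ sub-tile, a continuous image of $\til$. Then $\til=\bigcup_{v\in\alb^n}\til\otimes v$, $\limg=\bigcup_{g\in G}\til\cdot g$, each $\til\cdot g$ is isometric to $\til$, and by contraction the diameters of level-$n$ sub-tiles tend to $0$ uniformly as $n\to\infty$. From the description of $\lims=\xmo/{\sim}$ one reads off the intersection patterns: the images of $\til\otimes v,\til\otimes w$ in $\lims$ meet iff $g(v)=w$ for some $g\in\nuke$; inside $\limg$, $\til\otimes v\cap\til\otimes w\ne\emptyset$ iff $g\cdot v=w$ in $\bim^{\otimes n}$ for some $g\in\nuke$ (equivalently $g(v)=w$ and $g|_v=1$); and $\til\cdot g\cap\til\cdot h\ne\emptyset$ iff $hg^{-1}\in\nuke$. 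I will also use the following standard facts about the nucleus (see \cite{nek:book}): $\nuke$ generates $G$, it is closed under taking sections, and every element of $\nuke$ occurs as a section $g|_x$ of some $g\in\nuke$ (this last point is what lets one extend finite matching chains infinitely to the left, as is needed to justify the intersection patterns above).

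For $\lims$: the level-$n$ tiles form a finite closed cover of the compact space $\lims$ by sets of vanishing diameter. Hence a splitting $\lims=A\sqcup B$ into nonempty disjoint closed sets forces, once level-$n$ tiles have diameter smaller than $\operatorname{dist}(A,B)$, every level-$n$ tile into $A$ or into $B$; conversely, any partition of $\alb^n$ across which no two tiles meet yields such a splitting. Therefore $\lims$ is connected iff, for every $n$, the graph $T_n$ on $\alb^n$ with edges $\{v,g(v)\}$ ($g\in\nuke$) is connected. The components of $T_n$ are the orbits of $\langle\nuke\rangle=G$ on $\alb^n$, so $T_n$ is connected iff $G$ is transitive on $\alb^n$; ranging over $n$, this is exactly level-transitivity.

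For $\limg$: if $\til$ is connected, then $\limg=\bigcup_g\til\cdot g$ is a union of isometric copies of $\til$ whose intersection graph is the Cayley graph of $(G,\nuke)$ — connected since $\nuke$ generates $G$ — so $\limg$ is connected; and $\limg$ connected $\Rightarrow\lims$ connected $\Rightarrow G$ transitive on $\alb$. Running the same ``shrinking tiles'' argument inside the compact set $\til$, we get that $\til$ is connected iff for every $n$ the graph $\Delta_n$ on $\alb^n$ with an edge $\{v,w\}$ whenever $g\cdot v=w$ in $\bim^{\otimes n}$ for some $g\in\nuke$ is connected. It then remains to prove: (i) if $(G,\bim)$ is self-replicating then every $\Delta_n$ is connected, and (ii) if $\limg$ is connected then every virtual endomorphism $\phi_x$ is surjective; combined with the implication ``$\til$ connected $\Rightarrow\limg$ connected'' and with transitivity on $\alb$, these close the cycle self-replicating $\Rightarrow\til$ connected $\Rightarrow\limg$ connected $\Rightarrow$ self-replicating. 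I expect (i) to be the main obstacle. The intended proof is an induction on $n$: a connecting chain in $\Delta_n$ from $v$ to $w$ is lifted to a chain in $\Delta_{n+1}$ from $vx$ to $wy$ for appropriate last letters, and then one moves within a single fiber to correct the last letter; surjectivity of the $\phi_x$ is precisely what is used to realize the needed group elements as sections $g|_v$ of nucleus elements fixing $v$, after descending a bounded number of extra levels so that all auxiliary sections lie in $\nuke$. For (ii), assuming $\phi_x(\operatorname{Stab}_G(x))=H\subsetneq G$, the plan is to follow how the descendants $\til\otimes x\cdot g$ distribute over the cosets $H\backslash G$ and to extract from this a nonempty proper clopen subset of $\limg$.

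Finally, suppose $(G,\bim)$ is self-replicating, so $\til$ — and hence every level-$n$ sub-tile, as a continuous image of $\til$ — is connected. For $p\in\limg$ let $S_n(p)$ be the union of the level-$n$ sub-tiles containing $p$. It is connected (a union of connected sets through the common point $p$), has diameter at most twice that of a level-$n$ sub-tile, hence tending to $0$, and is a neighborhood of $p$: the union of the level-$n$ sub-tiles avoiding $p$ is a locally finite union of closed sets, hence closed, and avoids $p$, so its complement is an open neighborhood of $p$ contained in $S_n(p)$. Thus the sets $S_n(p)$ form a basis of connected neighborhoods and $\limg$ is locally connected.
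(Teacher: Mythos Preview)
The paper does not prove this theorem; it quotes it from \cite{nek:book}. Your tile-combinatorics outline is the standard route taken there, and the reductions ``$\lims$ connected $\Leftrightarrow$ every $T_n$ connected'' and ``$\til$ connected $\Leftrightarrow$ every $\Delta_n$ connected'', as well as the local-connectedness paragraph, are correct.

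There is, however, a genuine gap. You invoke as a ``standard fact'' that $\nuke$ generates $G$, but this needs the self-replicating hypothesis (it is \cite[Prop.~2.11.3]{nek:book}) and is false in general: if $a=\sigma(1,a)$ is the binary odometer and $f=\sigma(1,1)$ is the finitary involution, then $G=\langle a,f\rangle$ is finitely generated and contracting with nucleus $\{1,a,a^{-1}\}$, yet $f\notin\langle\nuke\rangle=\langle a\rangle$. (Compare the paper's own remark after Theorem~\ref{th:zerodimdescr} that adjoining finitary automorphisms leaves the nucleus unchanged.) Hence the step ``components of $T_n$ are the orbits of $\langle\nuke\rangle=G$'' is unjustified, and with it the implication ``$G$ level-transitive $\Rightarrow\lims$ connected''; you would still need to argue that level-transitivity of $G$ forces level-transitivity of $\langle\nuke\rangle$, which is an additional step your outline does not supply. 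Separately, items (i) and (ii) in your $\limg$ discussion are plans rather than proofs: in (i) the crucial mechanism---that every $g\in G$ occurs as $h|_v$ for some $h\in\nuke$ with $h(v)=v$---is exactly the content of \cite[Prop.~2.11.3]{nek:book} and is not an immediate consequence of surjectivity of $\phi_x$; in (ii) the coset idea is too vague to assess, as you do not say why the putative set is clopen, nonempty, and proper.
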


Let $(G, \bim)$ be a contracting group, and let $\alb$ be a basis of $\bim$. Suppose that $S$ is a finite symmetric generating set containing the nucleus and such that $g|_x\in S$ for every $g\in S$ and $x\in\alb$. Note that any finite generating set of a contracting group is a subset of a generating set satisfying these conditions. Consider the corresponding \emph{Cayley simplicial complex} with the set of vertices $G$ in which a subset $A\subset G$ is a simplex if and only if $gh^{-1}\in S$ for all $g, h\in A$. Let $\Delta_S$ be its geometric realization, where each $d$-dimensional simplex is realized as the standard affine simplex $\{(x_0, x_1, \ldots, x_d)\in\R^{d+1}\;:\;x_i\ge 0, \sum_ix_i=1\}$. We consider it with the $\ell^1$-metric, so that distance between $(x_0, x_1, \ldots, x_d)$ and $(y_0, y_1, \ldots, y_d)$ is $\sum_{i=0}^d|x_i-y_i|$.

The group $G$ acts naturally from the right on the Cayley complex, and hence it acts by isometries on $\Delta_S$. The action is obviously proper (as it is free on the set of vertices) and co-compact (the set of vertices is one orbit). We also have natural maps $I_x$ defined on the vertices by
\[I_x(g)=g|_x.\]
The maps $I_x$ are simplicial (since $S|_\alb\subset S$), and hence induce continous maps on $\Delta_S$. They satisfy the compatibility condition~\eqref{eq:Ixcompatible}, hence define a $G$-equivariant map $I:\Delta_S\otimes\bim\arr\Delta_S$.
Its $n$th iteration $I^n:\Delta_S\otimes\bim^{\otimes n}\arr\Delta_S$ is defined on the set of vertices by
\[I_v(g)=g|_v.\]

The following theorem was proved in~\cite{nek:models}.

\begin{theorem}
For all sufficiently large $m$ the $m$th iteration $I^m:\Delta_S\otimes\bim^{\otimes m}\arr\Delta_S$ is $G$-equivariantly homotopic to a contracting map.
\end{theorem}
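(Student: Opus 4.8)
The map $I^m$ is simplicial between the $\ell^1$-realisations of two simplicial complexes, hence it is automatically $1$-Lipschitz; the whole content of the statement is that inside its $G$-equivariant homotopy class there is a map which genuinely decreases distances after finitely many iterations. The plan is to transport the contracting homeomorphism $\widehat{I}\colon\limg\otimes\bim\arr\limg$ provided by Theorem~\ref{th:limgiscontracting} to the Cayley complex $\Delta_S$ along a comparison between $\Delta_S$ and the limit $G$-space $\limg$, and then to identify the resulting map with $I^m$ up to homotopy.

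\textbf{Step 1 (comparison maps).} First I would construct $G$-equivariant Lipschitz maps $p\colon\Delta_S\arr\limg$ and $q\colon\limg\arr\Delta_S$ with $p\circ q=\mathrm{id}_{\limg}$ on the nose. For $q$: cover $\limg$ by open neighbourhoods $U\cdot g$, $g\in G$, of the translates of the tile $\til$; since $S$ contains the nucleus, $\til\cdot g\cap\til\ne\emptyset$ only for $g$ in a fixed finite set, which after enlarging $S$ we may assume is contained in $S$, so the nerve of $\{U\cdot g\}$ embeds in $\Delta_S$. An equivariant partition of unity subordinate to this cover defines $q$, arranged so that $q(\til\cdot g)$ lies in a uniformly bounded neighbourhood of the vertex $g$. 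The map $p$ is built dually: send the vertex $g$ to a point $p(g)\in\til\cdot g$ and extend over simplices; $\limg$ is connected, and locally connected when $(G,\bim)$ is self-replicating, by Theorem~\ref{th:connectivity}, so after replacing $\bim$ by a tensor power $\bim^{\otimes k}$ (which only shrinks the tile) the vertices of any simplex get assigned points lying in an arbitrarily small connected subset, and the extension exists; the choices can be made so that $p\circ q=\mathrm{id}_{\limg}$ and $p,q$ are Lipschitz with a common constant $K$. Because $(G,\bim)$ is contracting, $p$ and $q$ are also coarse inverses of each other.

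\textbf{Step 2 (a contracting model on $\Delta_S$, and its homotopy class).} Put $J:=q\circ\widehat{I}^{\,m}\circ(p\otimes\mathrm{id}_{\bim^{\otimes m}})\colon\Delta_S\otimes\bim^{\otimes m}\arr\Delta_S$; it is $G$-equivariant and Lipschitz. Since $p\circ q=\mathrm{id}_{\limg}$ holds exactly, functoriality of $-\otimes\bim^{\otimes m}$ lets the iterates telescope: $J^{N}=q\circ\widehat{I}^{\,mN}\circ(p\otimes\mathrm{id}_{\bim^{\otimes mN}})$ for every $N$. Let $n_0$ and $L>1$ be as in the contracting property of $\widehat{I}$, so that $\widehat{I}^{\,n_0}$ is $L^{-1}$-Lipschitz; iterating, $\widehat{I}^{\,m n_0}$ is $L^{-m}$-Lipschitz, whence
\[d\bigl(J^{\,n_0}(\xi_1\otimes w),\,J^{\,n_0}(\xi_2\otimes w)\bigr)\ \le\ K^2 L^{-m}\,d(\xi_1,\xi_2)\]
for all $w\in\bim^{\otimes m n_0}$ and $\xi_1,\xi_2\in\Delta_S$. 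For $m$ large enough that $K^2L^{-m}<1$ this says exactly that $J$ is a contracting $G$-equivariant map (for the biset $\bim^{\otimes m}$). It remains to check $J\simeq_G I^m$. From the synthetic formula for $\widehat{I}$ on $\limg=(\xmo\times G)/{\sim}$ one reads off $\widehat{I}^{\,m}(\til\cdot g\otimes v)\subseteq\til\cdot g|_v$ for $v\in\alb^m$; hence on the vertex $(g,v)$ of $\Delta_S\otimes\bim^{\otimes m}$ the point $J(g,v)$ lies in $q(\til\cdot g|_v)$, which is within bounded distance of the vertex $g|_v=I^m(g,v)$ — and the analogous estimate holds on all cells. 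So $I^m$ and $J$ are uniformly boundedly close $G$-maps into $\Delta_S$, and I would build an equivariant homotopy between them cell by cell over the orbits of cells of $\Delta_S\otimes\bim^{\otimes m}$; largeness of $m$ enters here because $I^m$ restricted to a simplex has image of $\ell^1$-diameter at most $C\lambda^{m}$ times a constant, so for $m$ large the two maps are close in the strong sense needed to push the homotopy across each cell.

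\textbf{Main obstacle.} The crux is this last homotopy $I^m\simeq_G J$: the two maps are equivariant and boundedly close, but $\Delta_S$ need not be simply connected (the group need not be finitely presented), so a priori there is an obstruction to connecting them, and the assertion is precisely that it disappears once $m$ is large — one must turn ``$I^m$ is very collapsed'' into ``every obstruction cocycle is an equivariant coboundary''. A secondary difficulty is Step~1: realising $\limg$ as an honest $G$-equivariant retract of $\Delta_S$ (the identity $p\circ q=\mathrm{id}$ exactly, which is what makes the telescoping in Step~2 produce genuine — not merely coarse — metric contraction), which requires choosing the cover, the partition of unity, the vertex map $p$, and possibly the tensor power $\bim^{\otimes k}$, all compatibly.
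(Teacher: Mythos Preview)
The paper does not prove this theorem; it is quoted from \cite{nek:models} without argument. So there is no ``paper's own proof'' to compare against, and I can only assess your proposal on its merits.

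Your strategy --- transport the contraction on $\limg$ back to $\Delta_S$ via comparison maps --- is natural, but the execution has a structural gap you have not fully confronted. The demand $p\circ q=\mathrm{id}_{\limg}$ \emph{exactly} is the linchpin of your telescoping in Step~2, and it is almost certainly unachievable. The map $q$ lands in the interior of simplices of $\Delta_S$ (it is a partition-of-unity map), and $p$ is supposed to send that simplicial image back onto the typically fractal set $\limg$ hitting every point on the nose. That would make $\limg$ a retract of a polyhedron, which fails for most limit spaces (think of the Sierpi\'nski carpet example in the paper). Even the construction of $p$ alone is problematic: extending $p$ from the $1$-skeleton to higher simplices requires the relevant loops in $\limg$ to bound disks, i.e.\ local simple connectivity of $\limg$, which is not assumed and not generally true. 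You also invoke self-replication and pass to $\bim^{\otimes k}$, neither of which is in the hypotheses.

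If you drop the exact identity and settle for $p\circ q$ merely close to the identity, the telescoping breaks and you must control an accumulating error; this can be done (the errors are themselves contracted by $\widehat I$), but then you are effectively rebuilding the argument that appears in the proof of Theorem~\ref{th:contractingmodeldim} in this paper: cover $\limg$, take a nerve, and send vertices to barycenters of the simplices they hit after applying the dynamics, obtaining an explicit $\frac{|B|-1}{|B|}$ contraction. The route taken in \cite{nek:models} for $\Delta_S$ specifically is in this spirit --- it works directly with the combinatorics of sections $g\mapsto g|_v$ and the fact that for large $m$ every simplex of $\Delta_S$ is sent by $I^m$ into a single simplex (all sections land in the nucleus), then perturbs vertices to barycenters --- rather than routing through an auxiliary retraction onto $\limg$. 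Your acknowledged ``main obstacle'' (the homotopy $I^m\simeq_G J$ in a non-simply-connected $\Delta_S$) is then a symptom: the direct approach never leaves the simplex containing $I^m(\sigma\otimes v)$, so the straight-line homotopy is available and no $\pi_1$-obstruction arises.
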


This theorem implies that the limit space $\limg$ is an inverse limit of the simplicial complexes $\Delta_S\otimes\bim^{\otimes n}$.

\subsection{Topological dimension}

In this subsection, we give a short overview of definitions and basic facts about the topological dimension.

A \emph{cover} of a set $\X$ is a collection $\mathcal{U}$ of sets such that $\X\subseteq\bigcup_{A\in\mathcal{U}}A$. We also use notation $\bigcup\mathcal{U}=\bigcup_{A\in\mathcal{U}}A$.

If $\mathcal{U}$ and $\mathcal{V}$ are covers of a set $\X$, then we say that $\mathcal{V}$ is a \emph{refinement of} $\mathcal{U}$ if for every $V\in\mathcal{V}$ there exists $U\in\mathcal{U}$ such that $V\subset U$.

We say that a cover $\mathcal{U}$ of a set $X$ is of \emph{order} $n$ if every point of $X$ is contained in at most $n$ elements of $\mathcal{U}$.

\begin{defi}
We say that a topological space $\X$ has \emph{(Lebesgue) covering dimension} $\le d$ if for every open cover $\mathcal{U}$ of $\X$ there exists an open cover $\mathcal{V}$ of order $d+1$ refining $\mathcal{U}$.
\end{defi}

We can also use closed covers in the case of a metric space, since multiplicity of a closed cover is equal to the muliplicity of the cover by sufficiently small open neighborhoods of the covering sets.

If the space $\X$ is a separable metric space, then the covering dimension is also equal to (both versions of) the inductive dimension of $\X$, so we will just call it dimension, and denote it by $\dim\X$.

The following description of the covering dimension follows from Ostrand's theorem~\cite{ostrand:dimension}, see 
also~\cite[Proposition~1.6]{KirchbergWinter}.

\begin{proposition}
Let $\X$ be a metrizable space.
The covering dimension of $\X$ is at most $d$ if and only if for every open (resp.\  closed) cover $\mathcal{U}$ of $\X$ there exists an open (resp.\ closed) refinement $\mathcal{V}$ of $\mathcal{U}$ and a partition $\mathcal{V}=\mathcal{V}_0\cup\mathcal{V}_1\cup\ldots\cup\mathcal{V}_d$ such that for every $i=0, 1, \ldots, d$ the elements of $\mathcal{V}_i$ are pairwise disjoint.
\end{proposition}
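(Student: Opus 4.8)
The plan is to prove both implications, giving the open‑cover version in detail and then reducing the closed‑cover version to it. The ``if'' direction is immediate: if $\mathcal V=\mathcal V_0\cup\cdots\cup\mathcal V_d$ refines $\mathcal U$ and each $\mathcal V_i$ consists of pairwise disjoint sets, then every point of $\X$ lies in at most one member of each $\mathcal V_i$, hence in at most $d+1$ members of $\mathcal V$; so $\mathcal V$ is an open refinement of $\mathcal U$ of order $d+1$, which is exactly what the definition of $\dim\X\le d$ demands.

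For the ``only if'' direction, fix an open cover $\mathcal U$. By the definition of $\dim\X\le d$, choose an open refinement $\mathcal W=\{W_\alpha\}_{\alpha\in A}$ of $\mathcal U$ of order $\le d+1$. Let $N$ be the nerve of $\mathcal W$, with vertex set $\{v_\alpha:\alpha\in A\}$ and a simplex for each finite $B\subseteq A$ with $\bigcap_{\alpha\in B}W_\alpha\ne\emptyset$; since $\mathcal W$ has order $\le d+1$, every simplex of $N$ has at most $d+1$ vertices, so $\dim N\le d$. Since $\X$ is metrizable, hence paracompact, there is a (locally finite) partition of unity $\{\varphi_\alpha\}$ subordinate to $\mathcal W$, and it defines a continuous canonical map
\[\kappa:\X\arr|N|,\qquad \kappa(x)=\sum_{\alpha}\varphi_\alpha(x)\,v_\alpha,\]
with the property that $\varphi_\alpha(x)>0$ implies $x\in W_\alpha$.

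Now pass to the barycentric subdivision $N'$ of $N$: its vertices are the barycenters $\widehat\sigma$ of the simplices $\sigma$ of $N$, and $\widehat\sigma\mapsto\dim\sigma\in\{0,1,\dots,d\}$ is a proper coloring, because the vertices of any simplex of $N'$ are the barycenters of a strictly increasing flag of simplices of $N$ and so have pairwise distinct dimensions. Consider the open stars $\mathrm{St}_{N'}(\widehat\sigma)$ in $|N'|=|N|$: they form an open cover of $|N|$; two of them are disjoint whenever the two barycenters have the same color (distinct vertices of equal color never lie in a common simplex of $N'$); and for any vertex $v_\alpha$ of $\sigma$ one has $\kappa^{-1}(\mathrm{St}_{N'}(\widehat\sigma))\subseteq W_\alpha$. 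The last inclusion is the one computation: if $\kappa(x)$ lies in the interior of the $N'$‑simplex spanned by a flag $\sigma_0\subsetneq\cdots\subsetneq\sigma_k$ with $\sigma=\sigma_j$, then rewriting $\kappa(x)=\sum_it_i\,\widehat{\sigma_i}$ (all $t_i>0$) in the barycentric coordinates of $N$ shows that the $v_\alpha$‑coordinate of $\kappa(x)$, which equals $\varphi_\alpha(x)$, is at least $t_j/|\sigma_j|>0$, so $x\in W_\alpha$. Setting
\[\mathcal V_i=\bigl\{\kappa^{-1}(\mathrm{St}_{N'}(\widehat\sigma)):\sigma\in N,\ \dim\sigma=i\bigr\},\qquad \mathcal V=\mathcal V_0\cup\cdots\cup\mathcal V_d,\]
we obtain an open cover $\mathcal V$ of $\X$ (the open stars cover $|N|$) refining $\mathcal W$, hence $\mathcal U$, with each $\mathcal V_i$ pairwise disjoint, as required.

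For the closed‑cover version I would invoke the remark made above: given a closed cover $\mathcal U$, replace it by the open cover of sufficiently small open neighborhoods of its members, which has the same multiplicity; apply the open case; and then shrink the resulting open refinement to a closed cover (possible since it has order $\le d+1$ and $\X$ is normal), which preserves both the disjointness within each $\mathcal V_i$ and the refinement property. The two points I expect to need genuine care are this closed‑case reduction — arranging the closed shrinking to refine the original closed cover and not merely its neighborhoods — and the barycentric‑coordinate bookkeeping behind $\kappa^{-1}(\mathrm{St}_{N'}(\widehat\sigma))\subseteq W_\alpha$; the coloring and disjointness assertions are purely combinatorial. Since the statement is precisely Ostrand's characterization of covering dimension, one could alternatively just cite~\cite{ostrand:dimension,KirchbergWinter}.
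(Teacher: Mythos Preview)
The paper does not prove this proposition at all: it is stated with the preamble ``The following description of the covering dimension follows from Ostrand's theorem~\cite{ostrand:dimension}, see also~\cite[Proposition~1.6]{KirchbergWinter},'' and then used as a black box. Your closing remark that ``one could alternatively just cite~\cite{ostrand:dimension,KirchbergWinter}'' is therefore exactly what the paper does.

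Your argument for the open case is the standard nerve/barycentric-subdivision proof and is correct. The coloring of the vertices of $N'$ by $\dim\sigma$ is a proper coloring for the reason you give, the open stars of equicolored vertices are disjoint, and your computation showing $\kappa^{-1}(\mathrm{St}_{N'}(\widehat\sigma))\subseteq W_\alpha$ for $v_\alpha\in\sigma$ is right (the key point being that in the flag $\sigma_0\subsetneq\cdots\subsetneq\sigma_k$ every $\sigma_i$ with $i\ge j$ contains $v_\alpha$, so the $v_\alpha$-coordinate picks up at least $t_j/|\sigma_j|>0$). The ``if'' direction is immediate as you say.

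You correctly flag the closed-cover reduction as the place needing care. The paper's only contribution here is the one-line remark preceding the proposition, that in a metric space the multiplicity of a closed cover equals that of a cover by sufficiently small open neighborhoods; it does not spell out the shrinking step back to a closed refinement of the \emph{original} closed cover. If you want to complete that direction cleanly: given a closed cover $\mathcal{U}$, thicken to open $\varepsilon$-neighborhoods, run the open argument to get $\mathcal{V}=\bigcup_i\mathcal{V}_i$, then use normality (or a second Lebesgue-number argument) to shrink each $V\in\mathcal{V}$ to a closed set whose $\varepsilon$-neighborhood is still contained in $V$; the disjointness within each $\mathcal{V}_i$ and the refinement of the original closed $\mathcal{U}$ both survive.
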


\begin{defi}
Let $\limg$ be the limit $G$-space of a contracting group $(G, \bim)$. We say that a set $U$ is \emph{$G$-adapted} if it has compact closure and for every $g\in G$ we have either $U\cdot g=U$ or the closures of $U\cdot g$ and $U$ are disjoint. 
\end{defi}

\begin{lemma}
\label{lem:adapted}
For every neighborhood $W$ of a point $\xi\in\limg$ there exists a $G$-adapted open neighborhood $U$ of $\xi$ such that $U\subset W$. 
\end{lemma}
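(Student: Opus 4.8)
The plan is to use that the right action of $G$ on $\limg$ is proper (and that $\limg$ is locally compact), so that the stabilizer of $\xi$ is a finite subgroup and only finitely many elements of $G$ move a fixed compact neighborhood of $\xi$ onto a set meeting itself. First I would choose a neighborhood $W_0$ of $\xi$ with $\overline{W_0}$ compact and $W_0\subseteq W$. Put $F=\{g\in G:\overline{W_0}\cdot g\cap\overline{W_0}\ne\emptyset\}$; this set is finite by properness, and since $\xi\in\overline{W_0}$ every element fixing $\xi$ lies in $F$, so $H:=\mathrm{Stab}_G(\xi)$ is a finite subgroup contained in $F$.

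Second, I would separate $\xi$ from its images under the finitely many remaining elements $g\in F\setminus H$. For each such $g$ we have $\xi\cdot g\ne\xi$, so in the metric space $\limg$ there are open sets $A_g\ni\xi$ and $B_g\ni\xi\cdot g$ with $\overline{A_g}\cap\overline{B_g}=\emptyset$. Set
\[
V=W_0\cap\bigcap_{g\in F\setminus H}\bigl(A_g\cap B_g\cdot g^{-1}\bigr).
\]
Then $V$ is an open neighborhood of $\xi$ with $\overline V$ compact and $V\subseteq W$. By construction $\overline V\subseteq\overline{A_g}$ and $\overline V\cdot g\subseteq\overline{B_g}$, hence $\overline V\cdot g\cap\overline V=\emptyset$ for every $g\in F\setminus H$; and for $g\notin F$ we trivially have $\overline V\cdot g\cap\overline V\subseteq\overline{W_0}\cdot g\cap\overline{W_0}=\emptyset$.

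Third, I would make the neighborhood invariant under the finite group $H$: set $U=\bigcap_{h\in H}V\cdot h$. Since every $h\in H$ fixes $\xi$, each $V\cdot h$ is an open neighborhood of $\xi$, so $U$ is an open neighborhood of $\xi$, with $\overline U$ compact and $U\subseteq V\subseteq W$. Right-multiplication by an element of $H$ permutes the factors $V\cdot h$, so $U\cdot h'=U$ for all $h'\in H$. For $g\notin H$ we have $\overline U\cdot g\cap\overline U\subseteq\overline V\cdot g\cap\overline V=\emptyset$, by the previous step if $g\in F\setminus H$ and because $g\notin F$ otherwise. Thus $U$ is $G$-adapted and $\xi\in U\subseteq W$, as required.

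I do not expect a serious obstacle; the only point needing care is that $G$-adaptedness demands disjointness of the \emph{closures} of $U\cdot g$ and $U$, not merely that $U\cdot g$ avoids $\xi$ — this is why one separates $\xi$ and $\xi\cdot g$ by open sets with disjoint closures and builds $V$ from the preimages $A_g\cap B_g\cdot g^{-1}$ — together with checking that the final intersection making $U$ invariant under $H$ does not spoil the disjointness, which it does not because $U\subseteq V$. Local compactness of $\limg$, used to produce $W_0$, is standard: $\limg$ is a proper metric space, as each closed ball is covered by finitely many of the compact tiles $\til\cdot g$ occurring in $\limg=\bigcup_{g\in G}\til\cdot g$.
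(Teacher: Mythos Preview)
Your proof is correct and follows essentially the same three-step approach as the paper's: use local compactness and properness to reduce to finitely many group elements, separate $\xi$ from its non-trivial translates, then intersect over the finite stabilizer to obtain $H$-invariance. You are in fact slightly more careful than the paper in one respect: you explicitly choose $A_g$ and $B_g$ with disjoint \emph{closures}, which is what the definition of $G$-adapted actually requires, whereas the paper's proof only writes ``disjoint neighborhoods'' and would need the same small upgrade.
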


\begin{proof}
The space $\limg$ is locally compact, so there exists a compact neighborhood $U_1$ of $\xi$. Since the action of $G$ on $\limg$ is proper, the set $\{g_1, g_2, \ldots, g_n\}$ of elements $g\in G$ such that $U_1\cap U_1\cdot g\ne\emptyset$ is finite. For every $g_i$ such that $\xi\cdot g_i\ne \xi$, find disjoint neighborhoods $V_i\subset W$ of $\xi$ and $W_i$ of $\xi\cdot g_i$. Then $V_i'=V_i\cap W_i\cdot g_i^{-1}$ is a neighborhood of $\xi$ such that $V_i'\subset V_i$ and $V_i'\cdot g_i\subset W_i$ are disjoint. Take the intersection $U_2$ of all such neighborhoods $V_i$. Then $U_2\cdot g$ and $U_2$ are disjoint for all $g\in G$ such that $\xi\cdot g\ne\xi$.

The stabilizer of $\xi$ in $G$ is finite. It follows that the intersection $U$ of the neighborhoods $U_2\cdot g$ for all elements $g\in G$ such that $\xi\cdot g=\xi$ is a neighborhood of $\xi$. Consequently, $U$ is a $G$-adapted neighborhood of $\xi$.
\end{proof}

\begin{proposition}
\label{prop:dimlimglims}
Let $(G, \bim)$ be a contracting group. Then $\dim\limg=\dim\lims$.
\end{proposition}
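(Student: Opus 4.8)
The plan is to prove the two inequalities $\dim\limg\le\dim\lims$ and $\dim\lims\le\dim\limg$ separately, using the orbit map $\pi\colon\limg\arr\lims$. Two of its properties will be used repeatedly: it is open, being the quotient map of a group action; and, since the action of $G$ on $\limg$ is proper, for every compact $K\subseteq\limg$ only finitely many $g\in G$ satisfy $K\cdot g\cap K\ne\emptyset$, so the restriction of $\pi$ to any compact set is finite-to-one.

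For $\dim\limg\le\dim\lims$: since $\limg=\bigcup_{g\in G}\til\cdot g$ is a countable union of compact sets, each homeomorphic to the tile $\til$, the countable sum theorem gives $\dim\limg=\dim\til$. The restriction $\pi|_\til\colon\til\arr\lims$ is onto (because $\limg=\bigcup_g\til\cdot g$ and $\pi(\til\cdot g)=\pi(\til)$), it is a continuous map from a compact space, hence closed, and its fibers are finite. By Hurewicz's theorem on closed maps, $\dim\til\le\dim\lims+\sup_{y\in\lims}\dim\bigl(\pi|_\til\bigr)^{-1}(y)=\dim\lims$, since finite sets are zero-dimensional.

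For $\dim\lims\le\dim\limg$: by Lemma~\ref{lem:adapted} every point of $\limg$ has a $G$-adapted open neighborhood; pushing these forward by the open map $\pi$ and using compactness of $\lims$ yields finitely many $G$-adapted open sets $U_1,\dots,U_k\subseteq\limg$ with $\lims=\bigcup_{i=1}^{k}\pi(\overline{U_i})$. Each $\pi(\overline{U_i})$ is closed in $\lims$, being a continuous image of the compact set $\overline{U_i}$, so by the finite sum theorem $\dim\lims=\max_{i}\dim\pi(\overline{U_i})$. Fix $i$. Because $U_i$ is $G$-adapted and the action is proper, $H_i=\{g\in G:\overline{U_i}\cdot g=\overline{U_i}\}$ is a finite subgroup of $G$, every other translate of $\overline{U_i}$ is disjoint from $\overline{U_i}$, and hence the natural map $\overline{U_i}/H_i\arr\pi(\overline{U_i})$ is a continuous bijection between compact Hausdorff spaces, i.e.\ a homeomorphism. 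Since the quotient of a compact metric space by an action of a finite group does not raise the covering dimension,
\[\dim\pi(\overline{U_i})=\dim\bigl(\overline{U_i}/H_i\bigr)\le\dim\overline{U_i}\le\dim\limg,\]
and taking the maximum over $i$ completes the argument.

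The only nonroutine ingredient — and the step I expect to be the main obstacle — is the fact invoked at the end: a finite group quotient of a compact metric space has the same covering dimension. This is a classical result in the theory of transformation groups (it is equivalent to an equivariant version of the dimension characterization by covers), and all the remaining steps are bookkeeping with the sum theorems, Hurewicz's theorem on closed finite-to-one maps, and the $G$-adapted neighborhoods supplied by Lemma~\ref{lem:adapted}. One could also avoid Hurewicz's theorem in the first inequality by a direct argument: given an open cover of $\limg$, refine it by a family $\{U_i\cdot g:1\le i\le k,\ g\in G\}$ of translates of $G$-adapted sets, refine the finite cover $\{\pi(U_i)\}$ of $\lims$ to one of order $\le\dim\lims+1$, pull it back, and intersect with the individual translates $U_i\cdot g$; the $G$-adaptedness guarantees that the translates of $U_i$ meeting a given point coincide as sets, which keeps the multiplicity bounded by $\dim\lims+1$.
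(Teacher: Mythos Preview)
Your argument is correct. Both inequalities go through as written: for $\dim\limg\le\dim\lims$ the countable sum theorem reduces to $\dim\til$, and Hurewicz's closed--map theorem applied to the finite-to-one closed surjection $\pi|_\til$ finishes it; for $\dim\lims\le\dim\limg$ the reduction to a finite-group quotient of a compact metric set is exactly right, and the inequality $\dim(\overline{U_i}/H_i)\le\dim\overline{U_i}$ is the standard fact that an \emph{open} surjection with finite fibers does not raise covering dimension (this is the special case of Alexandroff's theorem you would cite anyway).

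The paper's own proof is considerably shorter and takes a different route: it simply observes that the quotient map $\pi\colon\limg\to\lims$ is open (immediate for any group-action quotient) and countable-to-one (since $G$ is countable), and then invokes in one stroke Alexandroff's theorem \cite[Theorem~1.12.8]{engel:dimension} that an open surjection with at most countable fibers between separable metric spaces preserves covering dimension. So the paper packages both inequalities into a single citation, whereas you unpack them with the sum theorems, Hurewicz, and the $G$-adapted machinery of Lemma~\ref{lem:adapted}. Your approach has the virtue of being more self-contained and of showing explicitly where each inequality comes from; the paper's approach is terser and avoids the finite-cover bookkeeping entirely. Either is perfectly acceptable here.
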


\begin{proof}
Let us show that the quotient map $\limg\arr\lims=\limg/G$ is open. Let $A$ be an open subset of $\limg$, and let $\xi\in A$. It is enough to show that there exists a neighborhood of $\xi$ contained in $A$ and mapped by the quotient map to an open subset of $\lims$.

There exists a $G$-adapted open neighborhood $U$ of $\xi$ such that $U\subset A$. Denote by $\overline U$ its image in $\lims$. The set $\bigcup_{g\in G}U\cdot g$ is the full preimage in $\limg$ of $\overline U$. The space $\lims$ is, by definition, the quotient of the topological space $\limg$ by the group action. Since the full preimage of $\overline U$ is open in $\limg$, the set $\overline U$ is open.

The quotient map $\limg\arr\lims$ is countable-to-one, since we assume that $G$ is countable. The statement about topological dimension follows then from a result of P.~Alexandroff, see~\cite[Theorem~1.12.8]{engel:dimension}. 
\end{proof}

We say that a cover $\mathcal{U}$ of a metric space $X$ is \emph{uniformly bounded} if $\sup_{U\in\mathcal{U}}\diam U<\infty$.

The following notion is due to M.~Gromov~\cite{gro:asympt}.

\begin{defi}
Let $\X$ be a metric space. We say that its asymptotic dimension is at most $d$ if for every $R\ge 0$ there exists a uniformly bounded cover $\mathcal{U}$ such that every set of diameter at most $R$ intersects at most $d+1$ elements of $\mathcal{U}$.
\end{defi}

An equivalent definition is given by the following proposition.

\begin{proposition}
A metric space $\X$ has asymptotic dimension at most $d$ if and only if for every $R>0$ there exists a uniformly bounded cover $\mathcal{U}$ which can be represented as a union $\mathcal{U}=\mathcal{U}_0\cup\mathcal{U}_1\cup\ldots\cup\mathcal{U}_d$ of $d+1$ sets such that for every $i=0, 1, \ldots, d$ the distance between any two different elements of $\mathcal{U}_i$ is greater than $R$.
\end{proposition}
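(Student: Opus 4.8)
The plan is to prove the two implications separately; all the content sits in the ``only if'' direction, the other being immediate. Indeed, if for the given $R>0$ there is a uniformly bounded cover $\mathcal{U}=\mathcal{U}_0\cup\dots\cup\mathcal{U}_d$ whose members of the same index are pairwise at distance $>R$, then a set $A$ with $\diam A\le R$ cannot meet two distinct members $U,U'$ of a single $\mathcal{U}_i$: points $a\in A\cap U$ and $a'\in A\cap U'$ would give $d(U,U')\le d(a,a')\le R$. Hence $A$ meets at most $d+1$ members of $\mathcal{U}$, which is exactly the condition defining $\asdim\X\le d$ (for $R=0$ one reads ``disjoint'' in place of ``distance $>R$'').

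For the converse, fix a target $\rho>0$; the goal is to produce $d+1$ uniformly bounded families, each with any two distinct members at distance $>\rho$, whose union covers $\X$. I would first enlarge a cover to gain a Lebesgue number. Put $\lambda:=c_d\rho$ for a constant $c_d=c_d(d)$ to be fixed at the end, apply the hypothesis with parameter $2\lambda$ to get a uniformly bounded cover $\mathcal{V}$ in which every set of diameter $\le 2\lambda$ meets at most $d+1$ members, and let $\mathcal{W}$ be the collection of closed $\lambda$-neighbourhoods of the members of $\mathcal{V}$. Then $\mathcal{W}$ is uniformly bounded, has Lebesgue number $\ge\lambda$ (every $\lambda$-ball lies in a member), and still has point-multiplicity $\le d+1$: if $x$ lies in the $\lambda$-neighbourhoods of $V_1,\dots,V_k$, then witnesses $v_i\in V_i$ with $d(x,v_i)\le\lambda$ form a set of diameter $\le 2\lambda$, so $k\le d+1$. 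Assuming (as we may, otherwise $\X$ is bounded and the claim is trivial) that no member of $\mathcal{W}$ equals $\X$, set $\phi_V(x)=\min\{d(x,\X\setminus V),\ \lambda\}$ --- the truncation at $\lambda$ is the crucial point --- and $\psi_V=\phi_V/\sum_{V'}\phi_{V'}$, the denominator being $\ge\lambda$ everywhere by the Lebesgue number. Let $N$ be the nerve of $\mathcal{W}$, which has dimension $\le d$, and let $p\colon\X\to|N|$, $p(x)=\sum_V\psi_V(x)[V]$, be the canonical map; its image lies in the $d$-skeleton. Since each $\phi_V$ is $1$-Lipschitz and bounded by $\lambda$, and at most $2(d+1)$ of the $\psi_V$ are nonzero at any two given points, a routine estimate yields $\|p(x)-p(y)\|_1\le C(d)\,d(x,y)/\lambda$ with $C(d)$ depending only on $d$, and in particular not on the diameters of the members of $\mathcal{W}$.

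It remains to colour the $d$-skeleton $|N|$ robustly and pull the colouring back along $p$. On the first barycentric subdivision $N'$, the assignment $\hat\sigma\mapsto\dim\sigma$ is a proper $(d+1)$-colouring (a simplex of $N'$ is a chain of simplices of $N$, all of distinct dimensions). Fixing $\delta\in(0,\tfrac1{d+1})$, the shrunken open stars $\{y\in|N'|:\text{the barycentric coordinate of }y\text{ at }\hat\sigma\text{ exceeds }\delta\}$ with $\dim\sigma=i$ are pairwise disjoint and, in the length $\ell^1$-metric, pairwise at distance $>\delta$ (a path from one to another with $\sigma\ne\tau$ of the same dimension must move the $\hat\sigma$-coordinate from above $\delta$ down to $0$), while the families over all $i$ together cover $|N'|=|N|$ (every point has some coordinate $>\tfrac1{d+1}>\delta$). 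Taking $\mathcal{U}_i$ to be the set of $p$-preimages of the individual shrunken stars of colour $i$, one checks: $\bigcup_i\mathcal{U}_i$ covers $\X$; the $p$-preimage of a shrunken star of $\hat\sigma$ is contained in $\bigcap_{V\in\sigma}V$ (since $\hat\sigma$ lies in the $N'$-support of $p(x)$, the vertices of $\sigma$ are members of $\mathcal{W}$ containing $x$), hence in a single member of $\mathcal{W}$ and therefore uniformly bounded; and two distinct members of $\mathcal{U}_i$ lie at distance $>\delta\lambda/C(d)$ by the estimate for $p$. Choosing $c_d:=C(d)/\delta$ makes the last quantity exceed $\rho$, which completes the argument.

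The step I expect to require the most care is the truncation of $\phi_V$: with the naive choice $\phi_V=d(\cdot,\X\setminus V)$, the modulus of continuity of $p$ involves the diameters of the members of $\mathcal{W}$, and those are controlled only through the asymptotic-dimension hypothesis applied at scale $2\lambda$ --- they need not be comparable to $\lambda$ --- so the constructed families could not be guaranteed $\rho$-separated. Truncating at $\lambda$ removes this dependence and is what ties the separation scale to $\rho$. Apart from this, the statement is the classical equivalence between the two standard formulations of asymptotic dimension, going back to Gromov~\cite{gro:asympt}, and one could alternatively simply cite it.
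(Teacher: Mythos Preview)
The paper does not prove this proposition; it is stated without proof as a well-known equivalent formulation of asymptotic dimension, due to Gromov. Your argument is correct and follows one of the standard routes to this equivalence: manufacture a cover with a definite Lebesgue number and bounded multiplicity, build the Lipschitz nerve map, and pull back a $(d+1)$-coloured cover of the nerve coming from the dimension colouring of the barycentric subdivision. Your emphasis on truncating $\phi_V$ at $\lambda$ is exactly the right point to flag, since without it the Lipschitz constant of $p$ would depend on the diameters of the members of $\mathcal{W}$, which the hypothesis does not bound in terms of $\lambda$.

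One small remark worth making explicit: your Lipschitz bound for $p$ is computed in the $\ell^1$-metric on $|N|$ (using $N$-barycentric coordinates), whereas the $\delta$-separation of the shrunken stars is stated in $N'$-barycentric coordinates. These two metrics on $|N|=|N'|$ are bi-Lipschitz with constants depending only on $d$, so the discrepancy is harmlessly absorbed into $C(d)$, but a sentence acknowledging this would close the gap cleanly. Apart from that, the proof is complete; as you say, one could equally well just cite the result.
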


\section{Topological dimension of the limit space}

\subsection{Minimal dimension of a contracting model}

The following theorem is proved in~\cite{nek:dyngroups}. We give here the proof for completeness, and also since parts of it will be used later in our paper.

\begin{theorem}
\label{th:contractingmodeldim}
Let $(G, \bim)$ be a contracting self-similar group. Then the following conditions are equivalent.
\begin{enumerate}
\item $\dim\limg\le d$.
\item There exists $m\ge 1$, a $d$-dimensional simplicial complex $\Gamma$ with proper co-compact action of $G$ on it, and a contracting $G$-equivariant piecewise affine map $I:\Gamma\otimes\bim^{\otimes m}\arr\Gamma$. 
\end{enumerate}
If $G$ is finitely generated and $d\ge 1$, then we may assume that the complex $\Gamma$ is connected.
\end{theorem}

In other words the topological dimension of the limit space is equal to the minimal dimension of a contracting affine model of the group.

\begin{proof}
Let us prove that (2) implies (1).
If $I:\Gamma\otimes\bim\arr\Gamma$ is a contracting model, then the limit space is homeomorphic to the inverse limit of the spaces $\Gamma\otimes\bim^{\otimes n}$ with respect to the maps $I_n$, see Theorem~\ref{th:modelGspace}. Each of the complexes $\Gamma\otimes\bim^{\otimes n}$ has the same dimension as $\Gamma$. It is known that the topological dimension of an inverse limit is not greater than the maximal topological dimension of the terms sequence, see~\cite[Proposition~2.3]{pearse:dimension}. Consequently, $\dim\limg$ is not greater than the dimension of $\Gamma$.

Let us prove the converse statement.
Let $\mathcal{W}$ be an arbitrary $G$-invariant cover of $\limg$ by open $G$-adapted sets.
For every $U\in\mathcal{W}$, the $G$-orbit of $U$ is the full preimage of the image of $U$ in $\lims$. It follows that the image of $U$ in $\lims$ is open. We get an open cover of $\lims$ by the images of the elements of $\mathcal{W}$. Consequently, by the Lebesgue covering lemma, for every sufficiently small subset $A$ of $\lims$ there exists $U\in\mathcal{W}$ such that $A$ is contained in the image of $U$. Let $\overline A$ be the full preimage of $A$ in $U$. Then $\overline A$ is $G$-adapted.

This implies that for every cover $\mathcal{U}_1$ of $\lims$ by sufficiently small subsets we can find a $G$-invariant cover $\mathcal{U}$ of $\limg$ by adapted sets such the set of images of the elements of $\mathcal{U}$ in $\lims$ is $\mathcal{U}_1$. We say that $\mathcal{U}$ is the \emph{lift} of $\mathcal{U}_1$.

Suppose that $\mathcal{V}$ is a cover of $\lims$  of multiplicity $\le d+1$ by sufficiently small \emph{closed} sets. By the above arguments, it can be lifted to a $G$-invariant cover $\mathcal{U}$ of $\lims$ by adapted sets. Suppose that $U_1, U_2, \ldots U_k$ are pairwise different elements of $\mathcal{U}$ such that $U_1\cap U_2\cap\ldots\cap U_k\ne\emptyset$. Then, by the definition of a $G$-adapted set, the images of $U_i$ in $\lims$ are pairwise different, hence $k\le d+1$. Consequently, the multiplicity of $\mathcal{U}$ is also $\le d+1$. Let us fix this cover $\mathcal{U}$.

\begin{lemma}
\label{lem:contractioncompact}
Let $\mathcal{C}\subset\limg$ be compact. There exists $m$ such that for every $v\in\bim^{\otimes n}$ such that $n\ge m$ the set of elements $U\in\mathcal{U}$ intersecting $\mathcal{C}\otimes v$ has a non-empty intersection.
\end{lemma}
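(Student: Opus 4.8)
I want to prove Lemma~\ref{lem:contractioncompact}: for a compact $\mathcal{C}\subset\limg$ there is $m$ so that whenever $n\ge m$ and $v\in\bim^{\otimes n}$, the family $\{U\in\mathcal{U}\ :\ U\cap(\mathcal{C}\otimes v)\ne\emptyset\}$ has non-empty total intersection. Here $\mathcal{U}$ is the fixed $G$-invariant cover of $\limg$ by $G$-adapted open sets of multiplicity $\le d+1$ constructed just above, and "$\mathcal{C}\otimes v$" denotes $I^n(\mathcal{C}\otimes v)\subset\limg$ — wait, more precisely it is the image of $\mathcal{C}\otimes v$ under the identification $\limg\otimes\bim^{\otimes n}\cong\limg$; since the model is contracting, these images have diameter tending to $0$.

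Let me think about what to prove and the plan.

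The plan is to combine the contraction of the model on $\limg$, which makes $\mathcal{C}\otimes v$ arbitrarily small, with a uniform ``Lebesgue''-type property of the fixed cover $\mathcal{U}$. First I reduce to basis words: since $G$ acts on $\limg$ by isometries and $\mathcal{U}$ is $G$-invariant, and since any $v\in\bim^{\otimes n}$ can be written $v=x\cdot g$ with $x\in\alb^n$ and $g\in G$, so that $\mathcal{C}\otimes v=(\mathcal{C}\otimes x)\cdot g$, the collection of elements of $\mathcal{U}$ meeting $\mathcal{C}\otimes v$ is the $g$-translate of the one meeting $\mathcal{C}\otimes x$, and has non-empty intersection exactly when the latter does; so it suffices to treat $v\in\alb^n$. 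By Theorem~\ref{th:limgiscontracting} the homeomorphism $I\colon\limg\otimes\bim\to\limg$ is contracting; iterating the contraction estimate and using that the compact set $\mathcal{C}$ has finite diameter, $\diam(\mathcal{C}\otimes v)\to 0$ as $n\to\infty$, uniformly over $v\in\alb^n$.

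Next I record the features of $\mathcal{U}$ that allow a uniform argument, all of them consequences of $G$-invariance, co-compactness, and the fact that its elements are $G$-adapted and hence have compact closure. There are only finitely many $G$-orbits of elements of $\mathcal{U}$, so their diameters are uniformly bounded and $\mathcal{U}$ is uniformly locally finite (every set of diameter $\le R$ meets at most $N(R)$ of its elements). Two elements of $\mathcal{U}$ with disjoint closures are at distance at least some fixed $\nu>0$ (their images in the finite cover of the compact space $\lims$ are disjoint closed sets, and the quotient map is $1$-Lipschitz). And $\mathcal{U}$, or a controlled open thickening of it, has a positive Lebesgue number $\lambda$: the Lebesgue-number function is positive, $1$-Lipschitz and $G$-invariant, hence descends to a positive continuous function on the compact space $\lims$.

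The claim to establish is that there is $\mu\in(0,\min(\lambda,\nu)]$ such that every $A\subseteq\limg$ with $\diam A<\mu$ meets only elements of $\mathcal{U}$ that have a common point; granting it, I choose $m$ so that $\diam(\mathcal{C}\otimes v)<\mu$ for all $v$ of length $\ge m$, and the lemma follows. For $\diam A<\min(\lambda,\nu)$ the set $A$ lies in a single element $U_0\in\mathcal{U}$, any two elements of $\mathcal{U}$ meeting $A$ are at distance $<\nu$ and hence have intersecting closures, and by local finiteness only finitely many elements meet $A$. The remaining and essential point is that this finite, pairwise-touching subfamily has non-empty total intersection once $\diam A$ is small enough, and here the way $\mathcal{U}$ was built must be used: one should choose the underlying finite cover $\mathcal{V}$ of $\lims$ of multiplicity $\le d+1$ to be, moreover, fine enough and ``nerve-accurate'' (this uses $\dim\lims=\dim\limg\le d$), so that its lift $\mathcal{U}$ has the property that small sets meet only members with non-empty total intersection; lifting a point of the corresponding intersection downstairs into the correct sheets upstairs --- uniquely determined because distinct sheets over a member of $\mathcal{V}$ are $G$-adapted, hence uniformly separated --- then gives the common point.

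The main obstacle, as this indicates, is not the reduction or the contraction but this last point: ``a tiny set meets only members with a common point'' genuinely fails for a carelessly chosen cover of multiplicity $\le d+1$ once $d\ge 2$ (for instance for a cover of a disk by a small central disk together with angular sectors), so the content of the hypothesis $\dim\limg\le d$ has to be spent exactly in choosing $\mathcal{U}$ --- equivalently $\mathcal{V}$ --- well enough to produce the uniform constant $\mu$ and exclude such ``pinched'' configurations of covering sets.
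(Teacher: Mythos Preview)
Your proposal has a genuine gap, and it stems from a misreading of the setup: the cover $\mathcal{U}$ in the paper is a cover by \emph{closed} adapted sets (it is the lift of a cover $\mathcal{V}$ of $\lims$ by sufficiently small \emph{closed} sets of multiplicity $\le d+1$), not by open sets as you write. Your counterexample with a central disk and angular sectors is an open-cover phenomenon; for a finite closed cover of a compact space the property you want is automatic. Indeed, for each subfamily $\mathcal{F}\subseteq\mathcal{V}$ with $\bigcap\mathcal{F}=\emptyset$, the continuous function $x\mapsto\max_{V\in\mathcal{F}}d(x,V)$ is strictly positive on the compact space $\lims$, hence bounded below by some $\mu_{\mathcal{F}}>0$; taking $\mu=\min_{\mathcal{F}}\mu_{\mathcal{F}}$ over the finitely many such $\mathcal{F}$ gives exactly the constant you were looking for. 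No extra ``nerve-accuracy'' hypothesis on $\mathcal{V}$ is needed, and your outline (reduce to $v\in\alb^n$ by $G$-invariance, shrink $\mathcal{C}\otimes v$ by contraction, then lift the common point from $\lims$ to the correct adapted sheets in $\limg$) can then be completed.

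The paper's own proof exploits closedness differently and avoids isolating a uniform $\mu$. It first observes that $\bigcup_{v\in\xs}\mathcal{C}\otimes v$ has compact closure, so only a finite set $\mathcal{U}_0\subset\mathcal{U}$ is ever met. If the conclusion failed along a sequence $v_k\in\alb^{n_k}$, one passes to a subsequence on which the family $\mathcal{U}_1\subseteq\mathcal{U}_0$ of elements meeting $\mathcal{C}\otimes v_k$ is constant, picks witnesses $\xi_{U,k}\otimes v_k\in U$ for each $U\in\mathcal{U}_1$, and passes to a further subsequence so that these converge. Contraction forces all the limits (over $U\in\mathcal{U}_1$) to coincide, and closedness of each $U$ puts that common limit in $\bigcap\mathcal{U}_1$, contradicting the assumption. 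So both approaches ultimately hinge on the same point you overlooked: the elements of $\mathcal{U}$ are closed.
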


\begin{proof}
Since $\mathcal{U}$ is $G$-invariant, it is enough to prove the statement for $v\in\alb^n$. The union of the sets $\mathcal{C}\otimes v$ for $v\in\xs$ has compact closure, since the maps $\xi\mapsto\xi\otimes x$ are contractions. It follows that the set of elements of $\mathcal{U}$ intersecting $\mathcal{C}\otimes v$ for some $v\in\xs$ is finite. Let us denote it $\mathcal{U}_0$. Suppose that the statement of the lemma is not true. Then there exists a strictly increasing sequence $n_k$ and a sequence $v_k\in\alb^{n_k}$ such that the intersection of the set of elements of $\mathcal{U}_0$ intersecting $\mathcal{C}\otimes v_k$ is empty. Since the number of subsets of $\mathcal{U}_0$ is finite, after passing to a subsequence, we may assume that the set $\mathcal{U}_1$ of elements of $\mathcal{U}$ intersecting $\mathcal{C}\otimes v_k$ is constant. Choose for every $U\in\mathcal{U}_1$ and every $k$ a point $\xi_{U, k}\in\mathcal{C}$ such that $\xi_k\otimes v_k\in U$. Passing to a sub-sequence, we may assume that $\xi_{U, k}\otimes v_k$ converges for every $U\in\mathcal{U}_1$. Since the points $\xi_{U, k}$ belong to a compact subset of $\limg$, the limit depends only on the sequence $v_k$. The limit belongs to $U$, since $U$ is closed. But this shows that the limit belongs to the intersection of the elements of $\mathcal{U}_1$, i.e., that the intersection is non-empty, which is a contradiction.
\end{proof}

There exists $m$ such that if $A, B\subset\mathcal{U}$ are finite subsets such that $A$ has non-empty intersection and every element of $B$ intersects $(\bigcup A)\otimes v$ for some $v\in\bim^{\otimes n}$ for $n\ge m$, then the intersection of the elements of $B$ is non-empty. This statement follows from Lemma~\ref{lem:contractioncompact}, since the set of possible subsets $A$ is finite modulo the action of $G$.

Let $\Gamma$ be the nerve of $\mathcal{U}$. It has dimension at most $d$ with a proper action of $G$ on it. Note that if $g\in G$ leaves a simplex of $\Delta$ invariant, then it fixes every its vertex (by the definition of an adapted set).

Define, for a vertex $U\in\mathcal{U}$ of $\Gamma$ and $v\in\bim^{\otimes m}$, the image $I(U\otimes v)$ as the barycenter of the simplex formed by all the elements of $\mathcal{U}$ intersecting $U\otimes v$. It exists by the choice of $m$.

Fix $v\in\bim^{\otimes m}$, and let $A=\{U_1, U_2, \ldots, U_k\}$ be a simplex of $\Gamma$, i.e., a subset of $\mathcal{U}$ with a non-empty intersection. 
Let $B$ be the set of elements of $\mathcal{U}$ intersecting $\bigcup_{i=1}^kU_i\otimes v$. It is a simplex of $\Gamma$. Let $B_i$ be the simplex of elements of $\mathcal{U}$ intersecting $U_i\otimes v$. We have $B_i\subset B$. By definition, $I(U_i\otimes v)$ is the barycenter of $B_i$. Since all simplices $B_i$ are contained in the simplex $B$, we can extend the map $I(\cdot\otimes v)$ in a consistent way to a map from the complex $\Gamma$ to itself, so that it is affine on each of the simplices of $\Gamma$. These extension agree with the biset structure, and we get an equivariant map $I:\Gamma\otimes\bim^{\otimes m}\arr\Gamma$. 

Let us show that the map $I(\cdot\otimes v)$ is contracting for every $v\in\bim^{\otimes m}$. We will use the $\ell^1$ metric for the barycentric coordinates on the simplices. Let $A, B, B_i$ be as above. The map $I(\cdot\otimes v)$ maps a point with barycentric coordinates $(x_1, x_2, \ldots, x_k)$ of the geometric realization of the simplex $A$  to the point of the geometric realization of the simplex $B$ with barycentric coordinates $x_1\vec b_1+x_2\vec b_2+\cdots+x_k\vec b_k$, where $\vec b_k$ is the barycenter of $B_i\subset B$.

Let $U$ be an element of the cover $\mathcal{U}$ intersecting $\bigcap_{i=1}^k U_i$. Suppose that it corresponds to the first barycentric coordinate in the simplex $B$.  We know that $U$ is contained in each simplex $B_i$. It follows that the first coordinate of each vector $\vec b_i$ is $1/|B_i|$. Consequently, the first coordinate of each $\vec b_i$ is greater or equal to $1/|B|$.

Let $\vec x=(x_1, x_2, \ldots, x_k)$ and $\vec y=(y_1, y_2, \ldots, y_k)$ be two points of the geometric realization of $A$. Then the distance between $I(\vec x\otimes v)$ and $I(\vec y\otimes v)$ is equal to the $\ell^1$-norm of the vector $(x_1-y_1)\vec b_1+(x_2-y_2)\vec b_2+\cdots+(x_k-y_k)\vec b_k$. Let $\vec b_k'=\vec b_k-(1/|B|, 0, 0, \ldots, 0)$. Since $\sum_{i=1}^k(x_i-y_i)=0$, we have $(x_1-y_1)\vec b_1+(x_2-y_2)\vec b_2+\cdots+(x_k-y_k)\vec b_k=(x_1-y_1)\vec b_1'+(x_2-y_2)\vec b_2'+\cdots+(x_k-y_k)\vec b_k'$. Let $\vec b_i'=(b_{i1}, b_{i2}, \ldots, b_{im})$.
The coordinates $b_{ij}$ are all non-negative, and $\sum_{j=1}^mb_{ij}=1-\frac{1}{|B|}$.  It follows that the distance between $I(\vec x\otimes v)$ and $I(\vec y\otimes v)$ is 
\[\sum_{j=1}^m\left|\sum_{i=1}^kb_{ij}(x_i-y_i)\right|\le\sum_{j=1}^m\sum_{i=1}^kb_{ij}|x_i-y_i|=\sum_{i=1}^k|x_i-y_i|\sum_{j=1}^mb_{ij}=\frac{|B|-1}{|B|}\sum_{i=1}^k|x_i-y_i|,\]
hence the map $I(\cdot\otimes v)$ is contracting on each simplex.

Let us show that if $G$ is finitely generated and $d\ge 1$, the we can modify the complex $\Gamma$ in such a way that it becomes connected. Note that if $\limg$ is connected, then the complex constructed above is automatically connected.

Choose for every $U\in\mathcal{V}$ a preimage $U'\in\mathcal{U}$. Fix also a finite generating set $S=S^{-1}\ni 1$ of $G$. Connect now every pair of vertices of $\Gamma$ of the form $U_1'\cdot g, U_2'\cdot sg$ by an edge (if they are different and are not already connected in $\Gamma$) for all $U_1, U_2\in\mathcal{V}$, $g\in G$, and $s\in S$. We get a connected locally finite simplicial complex $\tilde\Gamma$ on which $G$ acts by automorphisms properly and co-compactly. If $d\ge 1$, then dimension of $\tilde\Gamma$ is the same as the dimension of $\Gamma$.

Let $d$ be a metric on $\limg$ satisfying the conditions of Theorem~\ref{th:limgiscontracting}. Then there exists $\epsilon>0$ such that the $d$-distance between any two disjoint elements of the cover $\mathcal{U}$ is greater than $\epsilon$. There exists $n_0$ such that the distance between $U_1'\cdot g\otimes v$ and $U_2'\cdot sg\otimes v$ is less than $\epsilon$ for all $U_1, U_2\in\mathcal{V}$, $g\in G$, $s\in S$, and $v\in\bim^{\otimes n}$, $n\ge n_0$. It follows that the map $I^n:\Gamma\otimes\bim^n\arr\Gamma$ for all $n$ big enough will map the vertices of every edge of $\tilde\Gamma\setminus\Gamma$ to one connected component of $\Gamma$. It follows that for all $n$ big enough the map $I^n:\Gamma\otimes\bim^n\arr\Gamma$ can be extended to a contracting $G$-equivariant map $\tilde\Gamma\otimes\bim^n\arr\Gamma\subset\tilde\Gamma$.
\end{proof}

\subsection{Dynamical asymptotic dimension}

A \emph{groupoid} is a small category of isomorphisms. We consider it as a set of morphisms, where objects of the category are identified with the identity automorphisms (\emph{units} of the groupoid). We denote by $\be(g)$ and $\en(g)$ the \emph{source} and the \emph{range} of an element $g$ of the groupoid, respectively. In other words (taking into account the above identification of objects with units) we have $\be(g)=g^{-1}g$ and $\en(g)=gg^{-1}$. (We compose morphisms from right to left.) A \emph{topological groupoid} is a groupoid with topology on it such that the operations of taking inverse and (partially defined) multiplication are continuous.

For an action of a group $G$ on a topological space $\X$, we can consider the \emph{action groupoid} $G\times\X$ given by the conditions:
\[\be(g, x)=x,\quad\en(g, x)=g(x),\]
and $(g_2, g_1(x))(g_1, x)=(g_2g_1, x)$. We introduce on it the direct product topology. 

The \emph{groupoid of germs} of the action is the quotient of the action groupoid by the equivalence relation identifying $(g_1, x)$ with $(g_2, x)$ whenever there exists a neighborhood $U$ of $x$ such that restrictions of the action of $g_1$ and $g_2$ to $U$ coincide. We introduce the quotient topology on the groupoid of germs.

The following notion was introduced in~\cite{GWY:dad}.

\begin{defi}
\label{def:dad}
Let $\Gr$ be a topological groupoid. Its \emph{dynamical asymptotic dimension} $\dad\Gr$ is the smallest $d$ such that for every compact subset $C\subset\Gr$ there exists an open cover $\{U_0, U_1, \ldots, U_d\}$ of the space of units $\Gr^{(0)}$ such that for every $U_i$ the set $C|_{U_i}=\{g\in C\;:\;\be(g), \en(g)\in U_i\}$ is contained in a compact subgroupoid of $\Gr$.
\end{defi}

\begin{theorem}
\label{th:dad}
Let $(G, \bim)$ be a contracting self-similar group. Denote by $\Gr$ the groupoid of germs of the action of $G$ on the boudary $\xo$ of the tree $T_\bim=\xs$. The following conditions are equivalent.
\begin{enumerate}
\item The topological dimension of the limit space $\lims$ is less than or equal to $d$.
\item For every finite set $A\subset G$ there exists $n\ge 1$ and a partition $V_0, V_1, \ldots, V_d$ of the $n$th level $\alb^n$ of $\xs$ such that for every $i$ the groupoid of isomorphisms between the subtrees $v\xs$ of $\xs$ generated by the maps $g:v\xs\arr g(v\xs)$ for $g\in A$ and $v\in V_i$ such that $g(v)\in V_i$ is finite.
\item Condition (2) for $A$ equal to the nucleus.
\item The dynamical asymptotic dimension of $\Gr$ is less than or equal to $d$.
 \end{enumerate}
\end{theorem}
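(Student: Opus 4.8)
The plan is to prove the cycle of implications $(1)\Rightarrow(3)\Rightarrow(2)\Rightarrow(4)\Rightarrow(1)$. Since $(3)$ is literally the instance $A=\nuke$ of the universally quantified statement $(2)$, the implication $(2)\Rightarrow(3)$ is automatic, so this cycle establishes all four equivalences. The first thing to set up is the dictionary between the combinatorial language of $(2)$--$(3)$ and the groupoid $\Gr$. Any compact subset of $\Gr$ is contained in the compact set $\Gr_A$ of germs of elements of a suitable finite $A\subseteq G$; since $\xo=\Gr^{(0)}$ is compact and totally disconnected, an open cover of it by $d+1$ sets refines to a cover by cylinders $v\xo$ ($v\in\alb^n$) and, after discarding overlaps (which only shrinks the sets), becomes a partition $\alb^n=V_0\sqcup\cdots\sqcup V_d$; with $U_i=\bigcup_{v\in V_i}v\xo$, the restriction $\Gr_A|_{U_i}$ is exactly the set of germs of the partial homeomorphisms $v\xo\to g(v)\xo$ for $g\in A$, $v\in V_i$, $g(v)\in V_i$. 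The content of the dictionary is then: $\Gr_A|_{U_i}$ is contained in a compact subgroupoid if and only if the groupoid $\mathcal{G}_i$ of isomorphisms of subtrees $v\xs$ generated by these maps is finite. The direction $(2)\Rightarrow(4)$ uses only the easy half: a finite $\mathcal{G}_i$ has germ groupoid equal to the continuous image of the compact space $\mathcal{G}_i\times\xo$, hence compact. The harder half, needed for $(4)\Rightarrow(1)$, is that a compact open subgroupoid of $\Gr$ is covered by finitely many compact open bisections, each a finite union of ``cylinder germs'' of group elements; being closed under restriction to subcylinders, such a subgroupoid has all its subtree isomorphisms given, on deep enough subtrees, by finitely many group elements, and since a tree isomorphism is determined by the homeomorphism it induces on the boundary, only finitely many such isomorphisms occur.

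For $(3)\Rightarrow(2)$ I would argue by a contraction estimate. Given a finite $A\supseteq\nuke$, choose $N$ with $g|_w\in\nuke$ for all $g\in A$ and $|w|\ge N$, and let $W_0\sqcup\cdots\sqcup W_d=\alb^{n_0}$ witness $(3)$. Put $n=N+n_0$ and $V_i=\{\,w_1w_2:w_1\in\alb^N,\ w_2\in W_i\,\}$. If $g\in A$ and $v=w_1w_2\in V_i$ with $g(v)\in V_i$, then $g(v)=g(w_1)\cdot(g|_{w_1})(w_2)$ with $g|_{w_1}\in\nuke$ (as $|w_1|=N$) and $(g|_{w_1})(w_2)\in W_i$; identifying each subtree $w_1w_2\xs$ with $w_2\xs$ turns $g\colon v\xs\to g(v)\xs$ into the element $g|_{w_1}\colon w_2\xs\to(g|_{w_1})(w_2)\xs$ of the nucleus groupoid on $W_i$. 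These identifications are compatible with composition because the $\alb^N$-coordinate stays inside the finite set $\alb^N$ and its section re-enters $\nuke$; moreover $\gamma\in\mathcal{G}_i$ is recovered from its $w_2$-projection together with the pair of $\alb^N$-prefixes of its source and range subtrees, so $\mathcal{G}_i$ injects into a finite set and is finite.

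For $(1)\Rightarrow(3)$ I would work with the tile cover of $\lims$: recall that $\lims$ is covered by the tiles $F_v$ ($v\in\alb^n$), the images of the cylinders of $\xmo$ ending in $v$, with $\diam F_v\to0$ as $|v|\to\infty$, and that $F_v\cap F_w\neq\emptyset$ forces the existence of $h\in\nuke$ with $h(v)=w$. From $\dim\lims\le d$ and Ostrand's theorem~\cite{ostrand:dimension} (using that the multiplicity of a closed cover is realized by sufficiently small open neighbourhoods) one gets a finite open cover of $\lims$ splitting as $d+1$ pairwise-disjoint families $\mathcal{V}_0,\dots,\mathcal{V}_d$; taking $n$ large enough that every $F_v$ lies inside a member of this cover, colour $v$ by the $i$ with $F_v$ in a member of $\mathcal{V}_i$. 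Finiteness of each $\mathcal{G}_i$ then follows from a compactness-and-contraction argument in the spirit of Lemma~\ref{lem:contractioncompact}: tiles in a common $\mathcal{V}_i$ that meet must lie in the \emph{same} member by disjointness, which, combined with the contraction of the maps $\xi\mapsto\xi\otimes v$, uniformly bounds the subtree isomorphisms that can be generated.

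Finally, for $(4)\Rightarrow(1)$: by the harder half of the dictionary, $\dad\Gr\le d$ yields condition $(2)$. Given an arbitrary open cover $\mathcal{O}$ of $\lims$, I would pick $N$ so large that $\{F_v:v\in\alb^N\}$ refines $\mathcal{O}$ and apply $(2)$ to a finite $A\supseteq\nuke$ large enough to control the action up to level $N$, obtaining $\alb^N=V_0\sqcup\cdots\sqcup V_d$ with $\mathcal{G}_i$ finite. Grouping the tiles $F_v$ ($v\in V_i$) into unions over the orbits of $\mathcal{G}_i$, two such orbit-unions with the same $i$ are disjoint, since $F_v\cap F_{v'}\neq\emptyset$ produces $h\in\nuke\subseteq A$ with $h(v)=v'$, a generator of $\mathcal{G}_i$, forcing $v,v'$ into one orbit. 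The crux is then to show, after possibly descending to deeper tiles inside each block and exploiting the finiteness of $\mathcal{G}_i$ to ``unfold'' the holonomy, that these orbit-unions actually refine $\mathcal{O}$; Ostrand's theorem would then give $\dim\lims\le d$. I expect this last step (and, to a lesser extent, $(1)\Rightarrow(3)$) to be the main obstacle: one must convert the purely combinatorial finiteness of the groupoids $\mathcal{G}_i$ into genuine metric control of the limit space---that the pieces built from a block have small diameter---which is exactly the ``hyperbolic duality'' phenomenon, and the delicate point is the interplay between the generating set of $\mathcal{G}_i$, the tile-intersection pattern, and the contraction of the maps $\xi\mapsto\xi\otimes v$.
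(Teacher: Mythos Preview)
Your $(3)\Rightarrow(2)$ and $(2)\Rightarrow(4)$ are correct and match the paper's arguments. The two implications that touch condition~(1) each have a specific missing idea.

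For $(1)\Rightarrow(3)$: working entirely in $\lims$ is not enough. Your colouring does force all tiles $F_v$ in a single $\mathcal{G}_i$-orbit into one member of $\mathcal{V}_i$, and this bounds the \emph{orbits} of $\mathcal{G}_i$. It does not bound the \emph{isotropy}: a closed path $v=v_0\to v_1\to\cdots\to v_k=v$ of nucleus generators produces an automorphism of $v\xs$ whose section is an arbitrary product $g_k|_{v_{k-1}}\cdots g_1|_{v_0}$ of $k$ nucleus elements, and nothing in $\lims$ caps $k$ or the subgroup these products generate. The paper resolves this by lifting to $\limg$ and using a $G$-invariant cover $\tilde{\mathcal{U}}$ by \emph{adapted} open sets (Lemma~\ref{lem:adapted}). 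One fixes a basepoint $\xi\in\limg$ and colours $v\in\alb^n$ by the element $U\in\tilde{\mathcal{U}}_i$ containing $\xi\otimes v$; then any element $(v_1,v_2,h)$ of the groupoid forces $\xi\otimes v_2$ and $(\xi\otimes v_2)\cdot h$ to lie in the same adapted $U$, so $h$ belongs to the \emph{finite} setwise stabiliser of $U$. That finite-stabiliser property of adapted sets is exactly what kills the isotropy, and it has no analogue once you pass to $\lims$.

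For the direction back to (1), your order of quantifiers is inverted. You fix the target cover $\mathcal{O}$, choose $N$ so that level-$N$ tiles refine it, and then hope to obtain a partition of $\alb^N$ from~(2); but~(2) does not let you prescribe the level, and even if it did, the $\mathcal{G}_i$-orbit unions could swallow many tiles and fail to refine $\mathcal{O}$. The paper's $(3)\Rightarrow(1)$ proceeds in the opposite order: fix \emph{once} the nucleus partition $V_0\sqcup\cdots\sqcup V_d$ of $\alb^n$, and for every $m>n$ induce a partition $V_{m,0}\sqcup\cdots\sqcup V_{m,d}$ of $\alb^m$ by the length-$n$ prefix. The key observation is that any element of the level-$m$ groupoid on $V_{m,i}$, written as a word in nucleus generators, projects (on the first $n$ letters) to an element of the level-$n$ groupoid on $V_i$, and is then determined by that projection together with the length-$(m-n)$ suffix of its source. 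Hence each level-$m$ orbit has size at most $|\mathcal{G}_i^{(n)}|$, a bound \emph{independent of $m$}. The orbit-unions therefore give, for every $m$, a closed cover of $\lims$ of multiplicity $\le d+1$ whose mesh tends to $0$ as $m\to\infty$; this is precisely the ``metric control from combinatorial finiteness'' you were looking for, obtained not by unfolding at a fixed level but by carrying one partition down the tree.
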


\begin{proof}
If a cover $\{U_0, U_1, \ldots, U_d\}$ satisfies the conditions of the Definition~\ref{def:dad}, and $\{U_0', U_1', \ldots, U_d'\}$ is another cover such that $U_i'\subset U_i$, then it also satisfies the conditions of the definition. Consequently, in the case when the unit space $\Gr^{(0)}$ is homeomorphic to the Cantor set, we may assume that the sets $U_i$ are pairwise disjoint and clopen. Consequently, if $\Gr^{(0)}=\xo$, then we may assume that there exists $n$ and a partition $\{V_i\}_{i=0}^d$ of the set $\alb^n$ such that $U_i=\bigcup_{v\in V_i}v\xo$. This proves the equivalence of the conditions (4) and (2).

The implication (2)$\Longrightarrow$(3) is obvious.

Let us show the implication (3)$\Longrightarrow$(2). Suppose that (3) holds, and let $A$ be any finite subset of $G$. Let $\{V_i\}_{i=0}^d$ be the partition of $\alb^n$ satisfying the condition (3) for the nucleus $\nuke$. Let $k$ be such that $g|_v\in\nuke$ for all $g\in A$ and $v\in\alb^k$. Let $U_i$ be the set of words $u\in\alb^{n+k}$ such that the suffix of length $k$ of the word $u$ belongs to $V_i$. Then, for every $g\in A$ and $u\in U_i$ such that $g(u)\in U_i$, the restriction $g:u\xo\arr g(u)\xo$ is a map of the form $u_1u_2w\mapsto g(u_1)h(u_2w)$, where $u_1\in\alb^k$, $h\in\nuke$, and $u_2, h(u_2)\in V_i$. It follows that the groupoid generated by such maps is finite, since the groupoid generated by the maps $u_2w\mapsto h(u_2w)$, for $u_2, h(u_2)\in V_i$ and $h\in\nuke$, is finite.

Let us prove the implication (1)$\Longrightarrow$(2).
It follows from Lemma~\ref{lem:adapted} that there exists an open cover $\mathcal{U}$ of the limit space $\lims$ which is liftable to a $G$-invariant cover $\tilde{\mathcal{U}}$ by adapted sets of $\limg$ and such that there exists a partition $\mathcal{U}=\mathcal{U}_0\sqcup\mathcal{U}_1\sqcup\cdots\sqcup\mathcal{U}_d$ such that every set $\mathcal{U}_i$ consists of pairwise disjoint elements.  We will denote by $\tilde{\mathcal{U}}_i$ the set of preimages in $\tilde{\mathcal{U}}$ of elements of $\mathcal{U}_i$. Then the elements of $\tilde{\mathcal{U}}_i$ are also pairwise disjoint.

Let $A\subset G$ be a finite set such that $1\in A$. Pick $\xi\in\limg$.
It follows from contraction and Lebesgue lemma that there exists $n_A$ such that for every $v\in\alb^n$ there exists $U_v\in\tilde{\mathcal{U}}$ such that $\xi\cdot g\otimes v\in U$ for all $g\in A$. Let $i_v\in\{0, 1, \ldots, d\}$ be such that $U_v\in\tilde{\mathcal{U}}_{i_v}$. Note that set of chosen sets $U_v$ is finite.
We get a partition of $\alb^{n_A}$ into $d+1$ sets $C_i=\{v\in\alb^{n_A}\;:\; i_v=i\}$.

For every $i\in\{0, 1, \ldots, d\}$, consider the set $\mathcal{H}_i$ of isomorphisms $v_1\xs\arr v_2\xs$ of the form $v_1w\mapsto v_2h(w)$, where $v_1, v_2\in\alb^n$ and $h\in G$ are such that both points $\xi\otimes v_1, \xi\otimes v_2\cdot h$ belong to a single element $U\in\tilde{\mathcal{U}}_i$. Note that this implies that $h$ belongs to the stabilizer of $U$ (since $\tilde U$ is adapted) and hence $\xi\otimes v_2\in U$. It follows that $\mathcal{H}_i$ is a finite category of isomorphisms between subtrees of $\xs$. 

Suppose that $v\in\alb^n$ belongs to $C_i$ and let $g\in A$. Then $\xi\otimes v, \xi\otimes g\cdot v\in U_v\in\tilde{\mathcal{U}}_i$. The isomorphism $v\xs\arr g(v\xs)$ acts by $vw\mapsto g(v) g|_v(w)$, where $v, g(v)\in\alb^{n_A}$ and $g|_v$ are such that $\xi\otimes v$ and $\xi\otimes g(v)\cdot g|_v$ belong to $U_v$. It follows that this isomorphism belongs to $\mathcal{H}_i$.

It remains to prove the implication (3)$\Longrightarrow$(1). Let $n$ be such that condition (2) for $A$ equal to the nucleus is true. Consider the corresponding partition of the $n$th level of the tree $\xs$ into sets $V_0, V_1, \ldots, V_d$. Let $m>n$, and consider the induced partition $V_{m, 0}, V_{m, 1}, \ldots, V_{m, d}$ of $\alb^m$, where $v\in V_{m, i}$ if and only if the length $n$ prefix of $v$ belongs to $V_i$. 

Consider the partition of $\limg$ into the tiles of $m$th level, and partition them into $d+1$ sets $T_0, T_1, \ldots, T_d$, so that $\til\otimes v\in T_i$ and only if $v\in V_{m, i}$. For every $i\in\{0, 1, \ldots, d\}$ consider the graph with the set of tiles $T_i$ in which two tiles are connected if and only if they intersect. Then the connected components of this graph will be the orbits of the groupoid from condition (2). It follows that the components are of uniformly bounded size (with an estimate not depending on $m$). Let $\mathcal{U}_i$ be the set of unions of tiles corresponding to the connected components of this graph. Then the elements of $\mathcal{U}_i$ are pairwise disjoint, and each element is a union of a uniformly bounded number of tiles of the $m$th level. Each of the sets $\mathcal{U}_i$ is $G$-invariant, and we get a closed cover of multiplicity $\le d+1$ of the limit space $\lims$ into unions of $m$th level tiles such that every element of the cover is a union of a uniformly bounded number of tiles. It follows that the diameters of the elements of these covers uniformly converge to zero when $m\to\infty$, hence the topological dimension of $\lims$ is at most $d$.
\end{proof}

\subsection{Asymptotic dimension of the orbital graphs}

Consider the graph $\Gamma_w$ with the set of vertices equal to the orbit of $w$, in which two vertices are connected by an edge if and only if they are of the form $g(w), hg(w)$, for $g\in G$ and $h\in\nuke$. The graph $\Gamma_w$ is the \emph{orbital graph} of $G$ acting on $\xo$.

\begin{theorem}
\label{th:asdym}
Let $(G, \bim)$ be a contracting finitely generating self-replicating group. Then the topological dimension of its limit space is equal to the asymptotic dimension of every orbital graph of the action of $G$ on $\xo$.
\end{theorem}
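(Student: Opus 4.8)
The plan is to prove, for every $w\in\xo$, both $\asdim\Gamma_w\le\dim\lims$ and $\dim\lims\le\asdim\Gamma_w$, using throughout the combinatorial description of $\dim\lims$ supplied by Theorem~\ref{th:dad}. Since $(G,\bim)$ is self-replicating it is level-transitive, so $\alb^n$ is a single $G$-orbit, $\lims$ is connected, and (Theorem~\ref{th:connectivity}) each orbital graph $\Gamma_w$ is connected; moreover the ``prefix map'' $p_n\colon\Gamma_w\to\Gamma_n$, $u\mapsto u_n$, onto the adjacency graph $\Gamma_n$ of the level-$n$ tiles of $\lims$ (which, as in the proof of $(3)\Rightarrow(1)$ in Theorem~\ref{th:dad}, may be identified with the Schreier graph of the $G$-action on $\alb^n$ with respect to $\nuke$) is a surjective $1$-Lipschitz graph morphism. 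Here $w=x_1x_2\cdots$, $u_n$ is the length-$n$ prefix of $u$, and $\nuke^{\le R}$ denotes the set of products of at most $R$ elements of $\nuke$.

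For $\asdim\Gamma_w\le\dim\lims=:d$ the idea is: given $R>0$, apply Theorem~\ref{th:dad} in the form $(1)\Rightarrow(2)$ to $A=\nuke^{\le R}$, obtaining $n\ge1$ and a partition $\alb^n=V_0\sqcup\cdots\sqcup V_d$ so that for each $i$ the groupoid $\Hr_i$ generated by the tree isomorphisms $g\colon v\xs\to g(v)\xs$ with $g\in A$, $v\in V_i$, $g(v)\in V_i$ is finite. I would colour $u\in\Gamma_w$ by the $i$ with $u_n\in V_i$, and on the colour-$i$ vertices set $u\sim u'$ iff $u'=g(u)$ for some $g\in G$ for which the isomorphism $u_n\xs\to u'_n\xs$, $u_nz\mapsto u'_n\,g|_{u_n}(z)$, lies in $\Hr_i$ (and $u\sim u$ always); because $\Hr_i$ is a groupoid this is an equivalence relation, by the cocycle identity $g|_{v_1v_2}=(g|_{v_1})|_{v_2}$. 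Since a vertex $u'$ with $u\sim u'$ is determined by its prefix $u'_n$ together with the germ $g|_{u_n}$, i.e.\ by a morphism of $\Hr_i$ out of $u_n$ (the tail $\sigma^n(u)$ being fixed), each class has at most $|\Hr_i|$ vertices; and since any morphism of $\Hr_i$ is a composite of at most $|\Hr_i|$ generators, each realized by an element of $A=\nuke^{\le R}$, the class of $u$ has $\Gamma_w$-diameter at most $|\Hr_i|\cdot R$, a bound independent of $w$. Finally, if two colour-$i$ classes contained points at distance $\le R$, the witnessing element of $\nuke^{\le R}=A$ would define a \emph{generating} morphism of $\Hr_i$ between the two prefixes, forcing the classes to coincide. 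Thus the $\sim$-classes, split by colour into $d+1$ families, form a uniformly bounded cover of $\Gamma_w$ whose colour-$i$ families are $R$-separated, the required certificate that $\asdim\Gamma_w\le d$.

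For $\dim\lims\le\asdim\Gamma_w=:d$ it suffices, by the standard characterization of covering dimension together with the analysis in the proof of $(3)\Rightarrow(1)$ in Theorem~\ref{th:dad}, to produce for arbitrarily large $n$ a cover of the vertex set $\alb^n$ of $\Gamma_n$ by sets that are unions of a uniformly bounded (in $n$) number of tiles, of multiplicity $\le d+1$ and splitting into $d+1$ families of pairwise $\Gamma_n$-non-adjacent sets; passing to the limit in $n$, the associated covers of $\lims$ by unions of boundedly many level-$n$ tiles then have multiplicity $\le d+1$ and mesh tending to $0$, whence $\dim\lims\le d$. To obtain such a cover of $\Gamma_n$, I would take a certificate that $\asdim\Gamma_w\le d$ at a large scale $R$ — a uniformly bounded cover $\mathcal U=\mathcal U_0\cup\cdots\cup\mathcal U_d$ of $\Gamma_w$ with $R$-separated colour-$i$ families — and push it forward along $p_n$. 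For $n$ large this push-forward should still be a bounded cover, of multiplicity $\le d+1$ and with $\Gamma_n$-non-adjacent colour-$i$ families: a bounded piece of $\Gamma_w$ is collapsed by $p_n$ in a controlled way, because two vertices of $\Gamma_w$ sharing a length-$n$ prefix and joined by a short nucleus word differ by a nucleus word of bounded length stabilizing that prefix, and contraction together with finiteness of $\nuke$ should bound the resulting ambiguity uniformly in $n$ — so that for $R$ and then $n$ chosen large enough, $R$-separation upstairs survives as non-adjacency downstairs.

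This last point is where I expect the main obstacle to lie: making ``$p_n$ is almost injective on $R$-bounded sets for $n$ large'' into a usable statement. The map $p_n$ is only $1$-Lipschitz and is genuinely non-injective on $\Gamma_w$ — at every level $n$ there are nucleus elements fixing some $v\in\alb^n$ but with non-trivial section at $v$ — so naively disjoint cover elements upstairs may acquire overlapping or adjacent images downstairs; one has to show that the defect of injectivity of $p_n$ on bounded sets is governed by a finite amount of data that ``disperses'' as $n\to\infty$, which is exactly where the contracting hypothesis, the finiteness of the nucleus, and self-replication all enter. This is the contracting-group analogue of the comparison of scales between a hyperbolic group and its boundary from~\cite{BuyaloLebedeva}. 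An alternative, and perhaps technically cleaner, route to this second inequality would be to go through the identity $\dim\lims=\dad\Gr$ of Theorem~\ref{th:dad}(4) and a direct comparison between the dynamical asymptotic dimension of the germ groupoid $\Gr$ and the asymptotic dimension of its orbital graphs.
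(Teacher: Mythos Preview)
Your argument for $\asdim\Gamma_w\le\dim\lims$ is correct and close in spirit to the paper's. The paper uses only the partition of $\alb^n$ coming from condition~(3) of Theorem~\ref{th:dad} (for the nucleus) and then, for each $m\ge 0$, colours $x_1x_2\ldots\in\Gamma_w$ by the block $x_{m+1}\ldots x_{m+n}$, letting $m\to\infty$ to get growing separation; you instead apply condition~(2) directly to $A=\nuke^{\le R}$ and colour by the prefix. Both variants work.

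The second inequality, however, has a genuine gap, and you have located it correctly. Pushing the whole cover $\mathcal U$ of the infinite graph $\Gamma_w$ to the finite set $\alb^n$ via $p_n$ yields a cover of \emph{infinite} multiplicity: each fibre $p_n^{-1}(v)$ is infinite, so infinitely many $U\in\mathcal U$ meet it, and the colour-$i$ images cannot remain pairwise non-adjacent. One can check that $p_n$ is eventually injective on any fixed ball $B_r(w)$, but then $p_n(B_r(w))=B_r(w_n)$ is only a ball in $\Gamma_n$, not all of $\alb^n$; forcing surjectivity needs $r\ge\diam\Gamma_n$, which grows with $n$, and the argument becomes circular. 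No amount of ``dispersal as $n\to\infty$'' repairs this, because the problem is global, not local.

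The paper bypasses the projection entirely. It introduces the leaf space $\mathcal L_w$, the quotient of $\xmo\times\{g(w):g\in G\}$ by asymptotic equivalence, shows $\dim\mathcal L_w=\dim\lims$, and observes that the adjacency graph of its level-$0$ tiles is precisely $\Gamma_w$; an $\asdim$-certificate for $\Gamma_w$ thus gives a bounded-diameter cover of $\mathcal L_w$ by unions of level-$0$ tiles with $d+1$ colours. The shift $\si$ is a \emph{homeomorphism} $\mathcal L_w\to\mathcal L_{\si(w)}$ carrying level-$k$ tiles to level-$(k{+}1)$ tiles and contracting distances by a factor $\approx e^{-\alpha}$, so $\si^n$ transports the cover to one of $\mathcal L_{\si^n(w)}$ with mesh $\approx e^{-\alpha n}$ and unchanged multiplicity. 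Rather than cover all of $\lims$, one fixes a compact $\til'_u$ (the union of level-$0$ tiles touching $\til_u$) with $\dim\til'_u=\dim\lims$, uses that only finitely many isomorphism types of radius-$1$ balls occur in the orbital graphs to find a homeomorphic copy of $\til'_u$ inside each $\mathcal L_{\si^n(w)}$, and restricts the fine cover there. This ``shift as a homothety between leaves'' is the missing mechanism; your alternative route via $\dad\Gr$ would still need a comparable change-of-scale argument to pass from a single coarse cover of $\Gamma_w$ to arbitrarily fine covers of a set of full dimension.
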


\begin{proof}
Let us choose a basis $\alb\subset\bim$, and let $\nuke$ be the corresponding nucleus. Then $\nuke$ is a generating set of $G$ (see~\cite[Proposition~2.11.3]{nek:book}), and we can consider the orbital graphs with respect to this generating set.

Let us show that the asymptotic dimension of the orbital graphs is not greater than $\dim\lims$. Let $V_0, V_1, \ldots, V_d$ be the partition of the $n$th level of the tree satisfying condition (3) of Theorem~\ref{th:dad}.

Choose $m\ge 0$. Define $V_{m, i}$ as the set of words $x_1x_2\ldots \in\xo$ such that $x_{m+1}x_{m+2}\ldots x_{m+n}\in V_i$. If $x_1x_2\ldots, y_1y_2\ldots$ belong to the same connected component of the graph induced in $\Gamma_w$ by $V_{m, i}$, then $x_{m+1}x_{m+2}\ldots x_{m+n}$ and $y_{m+1}y_{m+2}\ldots y_{m+n}$ belong to the same connected component of the graph induced by $V_i$ in the graph of the action of $G$ on $\alb^n$. The size of the latter components are uniformly bounded. It follows that the size of the components of $\Gamma_w$ are also uniformly bounded. We get, therefore, a cover of $\Gamma_w$ by sets of uniformly bounded diameter. 

It remains to show that as $m$ increases to infinity, the distances between different components of $V_{m, i}$ grows to infinity. Suppose that it is not true. Then there exists sequences $g_m\in G$, $x_{1, m}x_{2, m}\ldots\in\xo$ such that $g_m$ has bounded length with respect to the nucleus, 
and the words $x_{m+1, m}x_{m+2, m}\ldots x_{m+n, m}$, $y_{m+1, m}y_{m+2, m}\ldots y_{m+n, m}$ belong to different components of $V_i$ for every $m$, where $y_{1, m}y_{2, m}\ldots=g_m(x_{1, m}x_{2, m}\ldots)$.
We have $y_{m+1, m}y_{m+2, m}\ldots y_{m+n, m}=g_m|_{x_{1, m}x_{2, m}\ldots x_{m, m}}(x_{m+1, m}x_{m+2, m}\ldots x_{m+n, m})$. But $g_m|_{x_{1, m}x_{2, m}\ldots x_{m, m}}$ belongs to the nucleus for all $m$ big enough, which is a contradiction.

Let us show that $\dim\lims$ is less than or equal to the asymptotic dimension of $\Gamma_w$.

Let $w\in\xo$, and consider the set of bi-infinite words $\ldots x_{-2}x_{-1}\;.\;g(w)$ for $g\in G$. Here the dot will mark the place between coordinates number 0 and -1 in the sequence. We consider it as a direct product of the space $\xmo$ with the discrete $G$-orbit of $w$. Consider the asymptotic equivalence relation: two sequences $(x_n)_{n\in\Z}$ and $(y_n)_{n\in\Z}$ are equivalent if and only if there exists a sequence $g_n$ taking values in a finite subset of $G$ (equivalently, in the nucleus) such that $g_n(x_{-n}x_{-n+1}\ldots)=y_{-n}y_{-n+1}\ldots$. Let $\mathcal{L}_w$ be the quotient space.

The topological dimension of $\mathcal{L}_w$ is equal to the topological dimension of $\lims$, by the same arguments as in the proof of Proposition~\ref{prop:dimlimglims}.

Note that the equivalence relation is invariant under the shift \[\si:\ldots x_{-2}x_{-1}\;.\;x_0x_1\ldots\mapsto \ldots x_{-1}x_{0}\;.\;x_{1}x_2\ldots.\] Since $(G, \bim)$ is assumed to be self-replicating, sequences that differ from each other only by the first letter belong to the same $G$-orbit. It follows that the shift induces a homeomorphism $\mathcal{L}_{x_0x_1\ldots}\arr\mathcal{L}_{x_1x_2\ldots}$, which we will also denote by $\si$.

The space $\mathcal{L}_w$ is a union of the sets $\mathcal{T}_{g(w)}$ equal to the images of the sets $\xmo\times\{g(w)\}$. Applying the shift to the sets $\mathcal{T}_{g(w)}$ we get tiles of the \emph{$n$th level} equal to the images of the sets of sequences $(x_k)_{k\in\Z}$ with specified values of $x_k$ for all $k\ge -n$. (The tiles become smaller when we increase $n$.) Note that here the level of a tile can be negative.

The shift $\si$ maps the $n$th level tiles to $(n+1)$st level tiles.
Two tiles $\til_{w_1}, \til_{w_2}$ intersect if and only if there exists an element $g$ of the nucleus $\nuke$ such that $g(w_1)=w_2$ (see~\cite[Section~3.3.2]{nek:book}). It follows that the adjacency of the tiles of the level number 0 is described by the orbital graph $\Gamma_w$. The adjacency of the tiles of the $n$th level is described by the graph $\Gamma_{\sigma^n(w)}$. Here we allow $n$ to be negative, by taking any preimage of $w$ under $\sigma^{-n}$ (as they all belong to the same $G$-orbit).

For two points $\xi_1, \xi_2\in\mathcal{L}_w$, define $\ell(\xi_1, \xi_2)$ as the maximal $n$ such that $\xi_i$ belong to intersecting $n$th level tiles. 

We have then $\ell(\sigma^k(\xi_1), \sigma^k(\xi_2))=\ell(\xi_1, \xi_2)+k$.

One can show, see~\cite{nek:hyperbolic}, that for all positive $\alpha$ that are small enough, there exists a metric $d_\alpha$ on $\mathcal{L}_w$ such that $C^{-1}e^{-\alpha\ell(\xi_1, \xi_2)}\le d_\alpha(\xi_1, \xi_2)\le Ce^{-\alpha\ell(\xi_1, \xi_2)}$ for all $\xi_1, \xi_2\in\mathcal{L}_w$.

The shift $\sigma$ is almost a homothety in the following sense. There exists $C$ such that for every $\xi_1, \xi_2\in\mathcal{L}_w$ and every $n\in\Z$ we have
\[C^{-1}e^{-\alpha n}d_\alpha(\xi_1, \xi_2)\le d_\alpha(\sigma^n(\xi_1), \sigma^n(\xi_2))\le Ce^{-\alpha n}d_\alpha(\xi_1, \xi_2).\]

For a given tile $\til_w$ (of the level $0$), denote by $\til'_w$ the union of all tiles of the level 0 intersecting it. Then $\til'_w$ is a compact neighborhood of $\til_w$ (see~\cite[Proposition~3.4.1]{nek:book}). The natural quotient map $\mathcal{L}_w\arr\lims$ is an open countable-to-one map, and the interior of $\til'_w$ is mapped surjectively onto $\lims$. It follows that the topological dimension of the interior of $\til'_w$ is equal to $\dim\lims$. Consequently, $\dim\til'_w\ge\dim\lims$. On the other hand, we have $\dim\mathcal{L}_w=\dim\lims$, and $\til'_w$ is a subset of $\mathcal{L}_w$, so we have $\dim\til'_w\le\dim\lims$. Consequently, $\dim\til'_w=\dim\til_w$.

Let $A_1, A_2$ be subsets of the $G$-orbit of $w\in\xo$, and denote by $\til_{A_i}$ the union of all tiles $\til_u$ for $u\in A_i$. It follows from the definition of the topology on $\mathcal{L}_w$ that if a bijection $\phi:A_1\arr A_2$ is an isomorphism of the subgraphs of $\Gamma_w$ spanned by $A_i$, then the map $\ldots x_{-2}x_{-1}\;.\;u\mapsto\ldots x_{-2}x_{-1}\;.\;\phi(u)$ induces a homeomorphism $\til_{A_1}\arr\til_{A_2}$.

The set $\til'_u$ is equal to $\til_A$, where $A$ is the ball in $\Gamma_w$ of radius $1$ with center in $u$. There exists an open subset $U\subset\xo$ such that the balls in $\Gamma_w$ of radius $1$ with center in each $u\in U$ are isomorphic to each other. It follows that for every $u\in U$ a homeomorphic copy of $\til'_u$ appears in each leaf $\mathcal{L}_w$.

Suppose that the asymptotic dimension of $\Gamma_w$ is less than or equal to $d$.
Since $\Gamma_w$ describes the adjacency of tiles of the $0$th level of $\mathcal{L}_{w}$, there exists $C$ and a cover $\mathcal{U}=\mathcal{U}_0\cup\ldots\cup\mathcal{U}_d$ of $\mathcal{L}_w$ by sets of diameter $\le C_1$ (unions of the tiles corresponding to the respective elements of the cover of $\Gamma_w$) such that elements of each $\mathcal{U}_i$ are pairwise disjoint.

Applying $\sigma$ to it, we will get a cover of $\Gamma_{\sigma^n(w)}$ by sets of size at most $CC_1e^{-n\alpha}$ with the same condition. Finding a homeomorphic copy of $\til_w'$ in $\Gamma_{\sigma^n(w)}$, we prove, using the Lebesgue covering lemma, that $\dim \til_w'$ is at most $d$.
\end{proof}

\section{One-dimensional case}

\subsection{Properties of contracting groups of dimension $\le 1$}
\begin{theorem}
\label{th:zerodimdescr}
Let $(G, \bim)$ be a contracting self-similar group. The following conditions are equivalent.
\begin{enumerate}
\item The limit space of $(G, \bim)$ is zero-dimensional.
\item The group $G$ is locally finite.
\item The subgroup of $G$ generated by the nucleus is finite.
\end{enumerate}
\end{theorem}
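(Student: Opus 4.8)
The plan is to prove the cycle of implications $(1)\Rightarrow(3)\Rightarrow(2)\Rightarrow(1)$, leaning heavily on the combinatorial reformulation of dimension supplied by Theorem~\ref{th:dad} (with $d=0$) and on the structure of the limit space as a quotient of $\xmo$.

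First I would treat $(1)\Rightarrow(3)$. Assuming $\dim\lims\le 0$, condition~(3) of Theorem~\ref{th:dad} for $d=0$ gives some level $\alb^n$ and a partition into a \emph{single} block $V_0=\alb^n$ such that the groupoid of isomorphisms between subtrees $v\xs$ generated by $g\colon v\xs\to g(v\xs)$, for $g$ in the nucleus $\nuke$ and $v\in\alb^n$, is finite. But since $V_0$ is everything, this groupoid contains, for each $g\in\nuke$, the full action of $g$ on $\xo$ recorded through its action on the tree; more precisely, $\nuke$ itself embeds into this finite groupoid of germs/isomorphisms (two elements with the same germs on all of $v\xs$ for every $v$ coincide on $\xo$, hence as elements of the faithful quotient). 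Since $\nuke$ generates $G$ (or generates the faithful quotient, which suffices because the limit space only depends on the faithful quotient), finiteness of this groupoid forces the subgroup generated by $\nuke$ to be finite: every word in $\nuke$ acts on $\xo$ through one of finitely many isomorphisms. The main technical care here is bookkeeping the passage between ``germs on $\xo$'', ``isomorphisms of subtrees'', and ``elements of the faithful quotient'', but this is exactly the content that makes condition~(3) of Theorem~\ref{th:dad} for $d=0$ say ``$\langle\nuke\rangle$ acts as a finite groupoid.''

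Next, $(3)\Rightarrow(2)$ is a standard fact about contracting groups and I would argue it directly: let $H=\langle\nuke\rangle$, which is finite by hypothesis. For any $g\in G$, contraction gives $n$ with $g|_v\in\nuke\subset H$ for all $v\in\alb^n$; but the section map into level $n$ together with the action on $\alb^n$ embeds $G$ into $\symm(\alb^n)\ltimes H^{\alb^n}$ once we know $G$ is generated by $\nuke$ (using that the nucleus generates $G$, cf.~\cite[Proposition~2.11.3]{nek:book}, after passing to the faithful quotient). Hence $G$ is finite, and in particular locally finite. Actually, to get \emph{local} finiteness without assuming $G$ itself is finitely generated, I would run the same argument with an arbitrary finitely generated subgroup $G_0\le G$: choose $n$ so that all sections of all generators of $G_0$ at level $n$ lie in $\nuke$, and note $G_0$ embeds into $\symm(\alb^n)\ltimes H^{\alb^n}$, a finite group. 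So every finitely generated subgroup is finite, i.e.\ $G$ is locally finite. The one subtlety is that a priori sections of elements of $\nuke$ itself must again lie in $\nuke$ (true by definition of the nucleus), so $H$ is genuinely closed under the operations used.

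Finally, $(2)\Rightarrow(1)$: if $G$ is locally finite then $\langle\nuke\rangle$ is finite, so condition~(3) of Theorem~\ref{th:dad} holds trivially with $n=1$, the single block $V_0=\alb$, because the groupoid generated by a finite set of isomorphisms coming from a finite group is automatically finite; Theorem~\ref{th:dad} then yields $\dim\lims\le 0$. Alternatively, and perhaps more transparently, one can argue directly from the synthetic description of $\lims$ as the quotient of $\xmo$ by the nucleus-recoding relation: since $\langle\nuke\rangle$ is finite, the asymptotic equivalence classes are uniformly finite, the quotient map $\xmo\to\lims$ is finite-to-one and closed, and $\xmo$ is zero-dimensional (a Cantor-type space), so $\dim\lims\le\dim\xmo=0$ by the Alexandroff-type theorem already invoked in Proposition~\ref{prop:dimlimglims}. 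I expect the genuinely load-bearing step to be $(1)\Rightarrow(3)$: extracting from ``one block suffices in Theorem~\ref{th:dad}'' the clean group-theoretic statement that $\langle\nuke\rangle$ is finite, i.e.\ correctly identifying the finite groupoid appearing there with (a quotient of) the Cayley graph of $\langle\nuke\rangle$ acting on $\xo$.
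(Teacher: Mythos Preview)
Your main line of argument is correct and is exactly the paper's approach: the paper's entire proof is the two sentences ``Equivalence of (1) and (2) follows from condition (2) of Theorem~\ref{th:dad}. Similarly, equivalence of (1) and (3) follows from condition (3) of the same theorem.'' Your cycle $(1)\Rightarrow(3)\Rightarrow(2)\Rightarrow(1)$ unpacks this, and your direct wreath-product embedding for $(3)\Rightarrow(2)$ is a pleasant bonus that the paper obtains only indirectly through $(1)$.

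Two corrections. First, the aside that ``$\nuke$ generates $G$'' is not available here: that is \cite[Proposition~2.11.3]{nek:book}, which needs self-replication, not assumed in this theorem. Fortunately you never actually use it --- you only need that $\langle\nuke\rangle$ is finite, which you get from the finite groupoid --- so just delete the remark.

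Second, and more substantively, your \emph{alternative} argument for $(2)\Rightarrow(1)$ does not work. The quotient map $\xmo\to\lims$ is finite-to-one (classes have size at most $|\nuke|$) and closed for \emph{every} contracting group, regardless of $\dim\lims$; the devil's staircase map from the Cantor set onto $[0,1]$ is a finite-to-one closed surjection between compact metric spaces that raises dimension. The Alexandroff theorem invoked in Proposition~\ref{prop:dimlimglims} concerns \emph{open} countable-to-one maps, and $\xmo\to\lims$ is typically not open. So drop that paragraph and keep your primary route through condition~(3) of Theorem~\ref{th:dad}, which is both correct and what the paper does.
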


\begin{proof}
Equivalence of (1) and (2) follows from condition (2) of Theorem~\ref{th:dad}. Similarly, equivalence of (1) and (3) follows from condition (3) of the same theorem.
\end{proof}

\begin{example}
The group of \emph{finitary automorphisms} of $\xs$ is the group of automorphism $g$ for which there exists $n$ such that $g|_v=1$ for all $v\in\alb^n$. It is contracting with the nucleus equal to $\{1\}$. The limit space $\lims$ is just $\alb^{-\omega}$, since the asymptotic equivalence is trivial. In general, for every contracting group $G$, the group generated by $G$ and the group of finitary automorphisms is contracting with the same nucleus as the nucleus of $G$.
\end{example}

\begin{example}
Every finite self-similar group is contracting. For example, consider the group acting on $\{0, 1, 2\}^\omega$ and generated by $a=(01)(a, a, 1)$, $b=(02)(b, 1, b)$, and $c=(12)(1, c, c)$. It is the symmetric group of permutations of $\{0, 1, 2\}$ acting on each coordinate of elements of $\{0, 1, 2\}^\omega$ (i.e., diagonally). 

The limit orbi-space of this group is the quotient of $\{0, 1, 2\}^{-\omega}$ by similarly defined action of the symmetric group on it. It is also homeomorphic to the Cantor set.
\end{example}

\begin{theorem}
\label{th:onedimdescription}
Let $(G, \bim)$ be a contracting faithful self-similar finitely generated group.
The limit space of $(G, \bim)$ has topological dimension 1 if and only if the self-similar group $(G, \bim)$ is the faithful quotient of a contracting virtually free group.
\end{theorem}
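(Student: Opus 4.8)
The plan is to prove both implications by using Theorem~\ref{th:contractingmodeldim}, which identifies the topological dimension of the limit space with the minimal dimension of a contracting affine model. The easy direction is the ``if'' part: suppose $(G,\bim)$ is the faithful quotient of a contracting virtually free group $(\widehat G, \widehat\bim)$. A virtually free group acts properly cocompactly on a tree, i.e.\ on a $1$-dimensional simplicial complex $\Gamma$. One must upgrade this to a \emph{contracting} $\widehat G$-equivariant model $I:\Gamma\otimes\widehat\bim^{\otimes m}\arr\Gamma$; here I would invoke the construction of the Cayley simplicial complex $\Delta_S$ preceding Theorem~\ref{th:contractingmodeldim}, but using a generating set $S$ for which the Cayley simplicial complex is already $1$-dimensional, which is possible precisely because a virtually free group has a generating set whose Cayley graph, thickened appropriately, is a tree (or more carefully, because the asymptotic dimension of a virtually free group is $1$ and one can realize this by a $1$-dimensional model). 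Then the argument of Theorem~\ref{th:modelGspace} shows $\limg[\widehat G]$ is an inverse limit of $1$-complexes, so $\dim\limg[\widehat G]\le 1$; and since $G$ is the faithful quotient, $\lims=\lims[\widehat G]$, so $\dim\lims\le 1$. That it is exactly $1$ (not $0$) follows from Theorem~\ref{th:zerodimdescr}: if it were $0$-dimensional then $G$ would be locally finite, but a faithful contracting group with $1$-dimensional (not $0$-dimensional) limit space is not locally finite — so one must only be careful to exclude the degenerate case, which the hypothesis ``topological dimension $1$'' does.

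The substantive direction is ``only if''. Assume $\dim\lims=1$. By Theorem~\ref{th:contractingmodeldim}, there is $m\ge 1$ and a $1$-dimensional simplicial complex $\Gamma$ — a graph — with a proper cocompact action of $G$ and a contracting $G$-equivariant piecewise affine map $I:\Gamma\otimes\bim^{\otimes m}\arr\Gamma$, and since $\dim\ge 1$ we may take $\Gamma$ connected. The natural candidate for the virtually free overgroup is $\widehat G=\pi_1^{\mathrm{orb}}(\Gamma/G)$, the orbifold (complex-of-groups) fundamental group of the quotient graph of groups $\Gamma/G$: because $\Gamma$ is a connected graph (a tree up to homotopy, being $1$-dimensional with a cocompact group action — actually one needs $\Gamma$ simply connected, which can be arranged by passing to the universal cover, enlarging $G$ accordingly to $\widehat G$), the group $\widehat G$ acting on the tree $\widetilde\Gamma$ with finite vertex and edge stabilizers is virtually free by the Karrass–Pietrowski–Solitar / Bass–Serre theory. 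The map $I$ descends to a covering correspondence $f,\iota:\Gamma_1\arr\Gamma$ on the quotient orbi-graphs (with $f$ the covering map coming from $I$ and $\iota$ the contraction), and the iterated-monodromy-group machinery recalled in Section~2 equips $\widehat G$ with a self-similar biset $\widehat\bim$ whose virtual endomorphism is $\iota_*\circ f^*$; the contraction of $I$ translates (as in the Example comparing contracting $I$ with expanding $f$) into $(\widehat G,\widehat\bim)$ being contracting. Finally one checks that the iterated monodromy group of this correspondence — the faithful quotient of $(\widehat G,\widehat\bim)$ — is $(G,\bim)$ itself; this is exactly the content of the functorial bijection between contracting self-similar groups and expanding covering maps of orbispaces cited from \cite{nek:models, nek:dyngroups}, applied to the model $I$.

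I expect the main obstacle to be two intertwined bookkeeping issues in the ``only if'' direction. First, $\Gamma$ need not be simply connected, so one has to replace it by its universal cover (in the orbispace sense) and correspondingly replace $G$ by an extension $\widehat G$ of $G$ by the fundamental group of the quotient orbi-graph; one must verify that this $\widehat G$ is still contracting, still has $(G,\bim)$ as its faithful quotient, and is virtually free. Virtual freeness is the cleanest part — it is immediate from Bass–Serre theory once $\widehat G$ acts properly on a tree with finite stabilizers, and properness of the $G$-action on $\Gamma$ (free on vertices, finite stabilizers on simplices as noted in the proof of Theorem~\ref{th:contractingmodeldim}) gives exactly that. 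The real care is in confirming that the biset induced on $\widehat G$ by the correspondence $f,\iota$ is the ``right'' one, i.e.\ that taking the faithful quotient recovers $(G,\bim)$ and not some proper sub- or quotient-biset; this requires matching the wreath recursion read off from $I$ on the vertex set of $\Gamma$ (where $I_v(g)=g|_v$ by construction) against the wreath recursion of the iterated monodromy group of $f,\iota$, and here the compatibility identity~\eqref{eq:Ixcompatible} together with the explicit synthetic description of $\limg$ as a quotient of $\xmo\times G$ does the job. Secondly, in the ``if'' direction, producing a genuinely $1$-dimensional \emph{contracting} model (rather than merely a $1$-dimensional complex with an equivariant map) requires running the homotopy argument of the last theorem of Section~2.3 while staying inside dimension $1$; since $d=1\ge 1$ the connectedness clause of Theorem~\ref{th:contractingmodeldim} applies and no dimension is lost, so this is routine but must be stated.
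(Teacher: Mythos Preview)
Your ``only if'' direction is essentially the paper's argument: take the $1$-dimensional contracting model $\Gamma$ from Theorem~\ref{th:contractingmodeldim}, form the fundamental group $\widehat G$ of the graph of groups $\Gamma/G$, lift everything to the universal covering tree, and check that the lifted biset is contracting and has $(G,\bim)$ as faithful quotient. The bookkeeping issues you flag are exactly the ones the paper works through explicitly (by writing $\widehat G$ as pairs $(g,[\gamma])$ and $\widehat\bim$ as pairs $(x,[\delta])$), so nothing is missing there.

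The ``if'' direction, however, has a real gap. You propose to obtain a $1$-dimensional contracting model by choosing a generating set $S$ of the virtually free overgroup $\widehat G$ for which the Cayley simplicial complex $\Delta_S$ is already $1$-dimensional. This is not possible in general: the definition of $\Delta_S$ requires $S$ to contain the nucleus and be closed under sections, and any such $S$ for a nontrivial contracting group will produce triangles (hence $2$-simplices) in $\Delta_S$. The hedge via asymptotic dimension does not give a construction either. Nor can you invoke Theorem~\ref{th:contractingmodeldim} in this direction, since $\dim\limg\le 1$ is precisely what you are trying to prove.

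The paper's fix is the step you are missing: take the Bass--Serre tree $\Gamma$ on which $\widehat G$ acts (from \cite{karrasspietrowskisolitar}) and the contracting map $I$ on the high-dimensional Rips complex $\Delta_S$ (from \cite{nek:models}), then build explicit $\widehat G$-equivariant Lipschitz maps $\phi:\Gamma\to\Delta_S$ and $\psi:\Delta_S\to\Gamma$. The composition $t\mapsto\psi(I^n(\phi(t)\otimes v))$ is then $\Lambda^2\lambda^n$-Lipschitz on $\Gamma$, hence contracting for large $n$, yielding a $1$-dimensional contracting model and thus $\dim\limg\le 1$ by Theorem~\ref{th:contractingmodeldim}. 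Constructing $\phi$ and $\psi$ takes some care (using a spanning tree of $\Gamma/\widehat G$, barycenters of the simplices $A_v=\{g:v\in T\cdot g\}$, and contractibility of the tree to extend over higher skeleta), and this transfer-of-contraction argument is the substantive content of the direction.
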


\begin{proof}
For every $m\ge 1$, a self-similar group $(G, \bim)$ is contracting if and only if $(G, \bim^{\otimes m})$ is contracting. Consequently, it is enough to prove that $(G, \bim^{\otimes m})$ is the faithful quotient of a contracting virtually free group for some $m\ge 1$.

By Theorem~\ref{th:contractingmodeldim}, there exists a connected graph $\Gamma$, a proper co-compact action of $G$ on it by automorphisms, and a contracting $G$-equivariant map $I:\Gamma\otimes\bim^{\otimes m}\arr\Gamma$. In order to simplify the notation, we replace $\bim^{\otimes m}$ by $\bim$, i.e., assume that $m=1$. 

Our virtually free group $\tilde G$ such that $G$ is its faithful quotient will be the fundamental group of the graph of groups $\Gamma/G$.

Let us describe it directly. Choose a basepoint $t\in\Gamma$ (e.g., a vertex). The elements of $\tilde G$ are pairs $(g, [\gamma])$, where $\gamma$ is a homotopy class of a path in $\Gamma$ starting in $t$ and ending in $t\cdot g$. They are multiplied  by the rule
\[(g_1, [\gamma_1])\cdot (g_2, [\gamma_2])=(g_1g_2, [\gamma_1\cdot g_2][\gamma_2]),\]
where $\gamma_1\cdot g_2$ is the path obtained from $\gamma_1$ by the action of $g_2$. Note that $\gamma_2$ ends in $t\cdot g_2$, which is the beginning of $\gamma_1\cdot g_2$. The path $\gamma_1$ ends in $t\cdot g_1$, hence the path $\gamma_1\cdot g_2$ ends in $t\cdot g_1g_2$.

It is checked directly that multiplication is associative, and that the element $(1, [t])$, where $[t]$ represents the trivial path at $t$, is a neutral element with respect to multiplication. We also have $(g, [\gamma])^{-1}=(g^{-1}, [\gamma\cdot g^{-1}]^{-1})$, hence $\tilde G$ is a group.

We have a natural epimorphism $(g, [\gamma])\mapsto g$ from $\tilde G$ to $G$.

Let us define the corresponding biset $\tilde{\bim}$. Its elements will be pairs $(x, [\delta])$, where $x\in\bim$, and $\delta$ is a path starting in $t$ and ending in $I(t\otimes x)$. The right action is defined by the rules
\[(x, [\delta])\cdot(g, [\gamma])=(x\cdot g, [\delta\cdot g][\gamma]).\]
Note that $\gamma$ ends in $t\cdot g$, which is the beginning of $\delta\cdot g$. The end of $\delta\cdot g$ is $I(t\otimes x)\cdot g=I(t\otimes x\cdot g)$.

The kernel $K$ of the epimorphism $G\arr\tilde G$ consists of the elements of the form $(t, [\gamma])$, where $\gamma$ is a loop in $\Gamma$. It follows that two elements $(x_i, [\delta_i])$, for $i=1, 2$, belong to the same right $K$-orbit if and only if $x_1=x_2$.

The left action is given by
\[(g, [\gamma])(x, [\delta])=(g\cdot x, [I(\gamma\otimes x)][\delta]).\]
Note that $I(\gamma\otimes x)$ begins in $I(t\otimes x)$ and ends in $I(t\cdot g\otimes x)=I(t\otimes g\cdot x)$.

It follows from the description of the right $K$-orbits and the definition of the biset structure on $\tilde{\bim}$ that $(G, \bim)$ is the faithful quotient of $(\tilde G, \tilde{\bim})$.

Let $\tilde X$ be the universal covering of $\Gamma$. We can represent the points of $\tilde X$ as the homotopy classes of paths $\delta$ in $\Gamma$ starting in $t$. The group $\tilde G$ acts naturally on $\tilde X$ in this interpretation:
\[[\delta]\cdot (g, [\gamma])=[\delta\cdot g][\gamma].\]
Note that the covering map $\tilde\Gamma\arr\Gamma$ and the natural epimorphism $G\arr\tilde G$ project this action of $\tilde G$ to the action of $G$ on $\Gamma$. It follows that the action of $\tilde G$ on $\tilde\Gamma$ is co-compact.

Since $\tilde\Gamma$ is a locally finite graph and the action of $\tilde G$ on it is an action by automorphisms (since the action of $G$ on $\Gamma$ is an action by automorphisms of the graph), it is enough, in order to prove properness of the action, to prove that the $\tilde G$-stabilizers of points of $\tilde\Gamma$ are finite.

If $[\delta]\cdot (g, [\gamma])=[\delta]$, then $[\delta]$ is homotopic to $[\delta\cdot g][\gamma]$. In particular, the end of $\delta$ is fixed by $g$. Since the action of $G$ on $\Gamma$ is proper, we get only finitely many choices for $g$. For each choice of $g$, the homotopy class $[\gamma]$ is equal to $[\delta\cdot g]^{-1}[\delta]$, i.e., is unique. It follows that the $\tilde G$-stabilizers of points of $\tilde\Gamma$ are isomorphic to the $G$-stabilizers of the corresponding points of $\Gamma$. In particular, the action $\tilde G$ on $\tilde\Gamma$ is proper.

Let us define the map $\tilde I:\tilde\Gamma\otimes\tilde{\bim}\arr\tilde\Gamma$ by
\[\tilde I([\delta]\otimes(x, [\beta]))=[I(\delta\otimes x)][\beta].\]
Recall that, by the definition of $\tilde\bim$, $\beta$ is a path starting in $t$ and ending in $I(t\otimes x)$. Consequently, $I(\delta\otimes x)$ will start in the end of $\beta$, so that the concatenation is defined. Note also that the end of $I(\delta\otimes x)$ is equal to $I(v\otimes x)$, where $v$ is the end of $\delta$. In particular, the covering map $\tilde\Gamma\arr\Gamma$ and the natural map $\tilde{\bim}\arr\bim$ transform $\tilde I$ into $I$. Since $I(\cdot\otimes x)$ contracts the lengths of the edges of $\Gamma$, the map $\tilde I(\cdot\otimes (x, [\beta]))$ contracts the lengths of edges of $\tilde\Gamma$.

We get a contracting equivariant map $\tilde I:\tilde\Gamma\otimes\tilde{\bim}\arr\tilde\Gamma$. Consequently, the group $(\tilde G, \tilde{\bim})$ is contracting. Since $\tilde G$ acts properly on the tree $\tilde\Gamma$, it is virtually free.

Conversely, suppose that there exists a virtually free contracting self-similar group $(\tilde G, \tilde{\bim})$ such that its faithful quotient is $(G, \bim)$.

Then, by Theorem~\cite[Theorem~6.7]{nek:models} there exists $m$ such that the natural equivariant map $g\otimes v\mapsto g|_v$ from the Rips complex $\Delta_{G, S}$ is homotopic to a contracting map $I:\Delta_{G, S}\otimes\bim^{\otimes m}\arr\Delta_{G, S}$ for some $m$, where $S$ is an arbitrary sufficiently large symmetric generating set of $G$.

By~\cite{karrasspietrowskisolitar}, there exists a proper co-compact action of $G$ on a tree $\Gamma$. Let us show that for all sufficiently large $S$ there exist $G$-equivariant Lipschitz maps $\Gamma\arr\Delta_{G, S}$ and $\Delta_{G, S}\arr\Gamma$.

Let us construct the first map. Choose a spanning tree of the finite graph $\Gamma/G$, and let $T$ be its lift to $\Gamma$. Then $\Gamma$ is covered by the sets $T\cdot g$. We may assume that $\Gamma$ has a free orbit (by adding an extra vertex of degree 1 to the graph of groups $\Gamma/G$). 

Since the action of $G$ on $\Gamma$ is proper, we may assume that the set of elements $g\in G$ such that $T\cap T\cdot g\ne\emptyset$ is contained in $S$. Let $v$ be a vertex or an edge of $\Gamma$, and let $A_v$ be the set of elements of $g$ such that $v\in T\cdot g$. Then $A_v$ is a simplex in $\Delta_{G, S}$. We assume that for every edge $e$ of $\Gamma$ and every $g\in G$ such that $e\cdot g=e$, the endpoints of $e$ are fixed by $g$. (Otherwise, we pass to the barycentric subdivision of $\Gamma$.) Then for every edge $e$ and an endpoint of $v$ of $e$, we have $A_e\subset A_v$. It follows that if we map every vertex $v$ of $\Gamma$ to the barycenter of $A_v$, and the midpoint of every edge $e$ of $\Gamma$ to the barycenter of $A_e$, then for every edge $e$ and its endpoint $v$, the image of the midpoint of $e$ and the image of $v$ form a 1-simplex in the barycentric subdivision of $\Delta_{G, S}$. Consequently, we get a simplicial map $\phi$ from $\Gamma$ to the barycentric subdivision of $\Delta_{G, S}$. The map is obviously $G$-equivariant.

Let us contract an equivariant map $\psi:\Delta_{G, S}\arr\Gamma$.
It follows from the definition of the map $\phi$ that a fiber of $\phi$ is contained in the intersections of a set of trees $T\cdot g$. Since every such an intersection is a tree, and $\phi$ is simplicial, the fibers of $\phi$ are trees of uniformly bounded size. After contracting them in $\Gamma$, we may assume that $\phi:\Gamma\arr\Delta_{G, S}$ is an embedding.

Since there exists a free orbit on $\Gamma$, the image $\phi(\Gamma)$ contains the set of vertices of $\Delta_{G, S}$.
We can construct a $G$-equivariant map $\psi:\Delta_{G, S}\arr\Gamma$ by mapping every edge of $\Delta_{G, S}$ to the unique arc in $\phi(\Gamma)$ connecting its endpoints. This can be done by a Lipschitz $G$-equivariant map. Since $\phi(\Gamma)$ is a tree hence contractible, we can extend this map from the 1-skeleton to the 2-skeleton, then to 3-skeleton, and eventually to all of $\Delta_{G, S}$. Since the action of $G$ on $\Delta_{G, S}$ is proper and co-compact, we can do it in a $G$-equivariant Lipschitz way.

We know that there exists a contracting $G$-equivariant map $I:\Delta_{G, S}\otimes\bim^{\otimes m}\arr\Delta_{G, S}$. Let $\phi:\Gamma\arr\Delta_{G, S}$ and $\psi:\Delta_{G, S}\arr\Gamma$ be $G$-equivariant Lipschitz maps. Suppose that they are $\Lambda$-Lipschitz, and the map $I_v:t\mapsto I(t\otimes v)$ is $\lambda$-contracting for some $\lambda\in (0, 1)$ and all $t\in\Delta_{G, S}$ and $v\in\bim^{\otimes m}$. Then the map $t\mapsto \psi(I^n(\phi(t)\otimes v))$ for $v\in\bim^{\otimes mn}$ maps points of $\Gamma$ on distance $l$ from each other to points of $\Gamma$ on distance at most $\Lambda^2\lambda^n$ from each other. By taking $n$ large enough, we will find a contracting $G$-equivariant map $\Gamma\otimes\bim^{\otimes mn}\arr\Gamma$. It will imply, by Theorem~\ref{th:contractingmodeldim}, that $\limg$ has topological dimension at most $1$. Since the virtually free group $G$ is infinite, the dimension is equal to 1, see Theorem~\ref{th:zerodimdescr}.
\end{proof}

\begin{example}
Consider the \emph{Basilica group} generated by the wreath recursion
\[a=\sigma(1, b),\qquad b=(1, a).\]

Let us show that it is contracting without using any relation, i.e., that the free group with this wreath recursion is contracting. 

We have $ab^{-1}=\sigma(1, ba^{-1}), ba^{-1}=\sigma(ab^{-1}, 1)$, so $ab^{-1}$ and $ba^{-1}$ must belong to the nucleus. Let us show that $\nuke=\{1, a, a^{-1}, b, b^{-1}, ab^{-1}, ba^{-1}\}$ is the nucleus. It is enough to show that the sections of the elements $\{a, a^{-1}, b, b^{-1}, ab^{-1}, ba^{-1}\}\cdot\{a, a^{-1}, b, b^{-1}\}$ in sufficiently long words belong to $\nuke$. It is enough to consider $a^2, ab, b^2, ba, ab^{-2}, a^2b^{-1}, aba^{-1}, bab^{-1}$. (We do not have to consider separately their inverses, as $\nuke^{-1}=\nuke$.) We have $a^2=(b, b)$, $b^2=(1, a^2)$, $ab=\sigma(1, ba)$, $ba=\sigma(a, b)$, $ab^{-2}=\sigma(1, ba^{-2})$, $a^2b^{-1}=(b, ba)$, $aba^{-1}=(b^{-1}ab, 1)$, $bab^{-1}=\sigma(a, ba^{-1})$. We see that sections of all elements eventually belong to $\nuke$. 

We have not used any relations between the generators $a, b$, hence the recursion is contracting on the free group. Consequently, the limit space of the Basilica group has topological dimension 1.
\end{example}

\begin{example}
Consider the Gupta-Sidki group~\cite{gupta-sidkigroup} acting on $\{0, 1, 2\}^*$ and generated by the cyclic permutation $a=(012)$ of the first level (with all sections trivial) and the element
\[b=(a, a^{-1}, b).\]

We see that this wreath recursion is not contracting on the free group, since we have $b^n=(a^n, a^{-n}, b^n)$. However, it is easy to see that $a^3=1$ and hence $b^3=1$ in the faithful quotient. We can consider, therefore, the wreath recursion on the free product $(\Z/3\Z)*(\Z/3\Z)$, presented by the defining relations $a^3=b^3=1$. Since the sections of $ab, a^{-1}b, ab^{-1}$ in one-letter words belong to the set $\{1, a, a^{-1}, b, b^{-1}\}$, we see that the wreath recursion is contracting on the free product. Consequently, the limit space of the Gupta-Sidki group is one-dimensional.
\end{example}

\subsection{Finitely presented cases}

\begin{theorem}
\label{th:onedimfinpres}
Let $(G, \bim)$ be a finitely presented faithful contracting self-replicating group such that the limit space $\limg$ has topological dimension 1. 
Then $(G, \bim)$ is equivalent to one of the following groups:
\begin{enumerate}
\item $\img{z^n}$ for some $n\in\Z$, $|n|\ge 2$.
\item $\img{T_n}$, where $T_n$ is a Chebyshev polynomial of degree $n$.
\item $\img{-T_n}$, where $T_n$ is a Chebyshev polynomial of an odd degree $n$.
\end{enumerate} 
\end{theorem}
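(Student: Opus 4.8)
The plan is to reduce, via Theorem~\ref{th:onedimdescription}, to a virtually free overgroup acting on a tree, to use finite presentability to force the limit space to be a circle or an arc, and then to invoke the rigidity of expanding self-maps of these one-dimensional spaces. In detail: since $\limg$ is one-dimensional, Theorem~\ref{th:onedimdescription} presents $(G,\bim)$ as the faithful quotient of a contracting virtually free self-similar group $(\tilde G,\tilde{\bim})$; by the construction in its proof, $\tilde G$ acts properly and cocompactly on a tree $\tilde\Gamma$ carrying a contracting equivariant map $\tilde I:\tilde\Gamma\otimes\tilde{\bim}\arr\tilde\Gamma$. By Theorem~\ref{th:modelGspace}, $\limg$ is the inverse limit of the forests $\tilde\Gamma\otimes\tilde{\bim}^{\otimes n}$, and $\lims=\limg/G$ is the inverse limit of the finite graphs of groups $\Gamma_n:=(\tilde\Gamma\otimes\tilde{\bim}^{\otimes n})/\tilde G$ along the bonding maps coming from the covering correspondence $\tilde f,\iota:\Gamma_1\arr\Gamma_0:=\tilde\Gamma/\tilde G$, with $\tilde f$ of degree $d:=|\alb|\ge 2$. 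Since $(G,\bim)$ is self-replicating, Theorem~\ref{th:connectivity} makes $\limg$ and $\lims$ connected and locally connected, so $\lims$ is a Peano continuum, of topological dimension exactly $1$ by hypothesis and Theorem~\ref{th:zerodimdescr} ($G$ is infinite). Write $f:\lims\arr\lims$ for the associated expanding self-covering, of degree $d$.

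The heart of the argument — and, I expect, the main obstacle — is to show that finite presentability of $G$ forces $\lims$ to be a circle or an arc. Because $\tilde f:\Gamma_{n+1}\arr\Gamma_n$ is a $d$-sheeted covering, the virtual Euler characteristics satisfy $\bar\chi(\Gamma_n)=d^{\,n}\bar\chi(\Gamma_0)$; hence if $\bar\chi(\Gamma_0)<0$ the graphs $\Gamma_n$ accumulate arbitrarily many independent loops, so $\lims$ carries essential loops of arbitrarily small diameter, and if $\Gamma_0$ has a vertex of valence $\ge 3$ then $\lims$ has branch points at every scale. I would argue that either phenomenon is incompatible with $G$ being finitely presented — this is exactly the behaviour of the non-finitely-presented examples (Grigorchuk, Gupta--Sidki, Basilica, and the dendrite examples) — for instance by extracting from it an infinite non-redundant family of relators in an arbitrary finite presentation of $G$, using the simply connected two-dimensional contracting model that a finite presentation provides (cf.\ Theorem~\ref{th:contractingmodeldim} and \cite{nek:models}). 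Granting this, $\bar\chi(\Gamma_0)=0$ and every vertex of $\Gamma_0$ has valence $\le 2$; inspecting finite connected graphs of finite groups with these two properties shows that $\tilde G\cong\Z$ (when $\Gamma_0$ is a cycle with trivial vertex and edge groups) or $\tilde G\cong D_\infty$ (when $\Gamma_0$ is a segment with trivial edge and interior-vertex groups and $\Z/2$ at the two endpoints). Since the only infinite-index normal subgroup of $\Z$, and of $D_\infty$, is trivial, we get $G\cong\tilde G$, and $\lims$ is, respectively, a circle or an arc carrying the $1$-orbifold structure with two order-$2$ endpoints.

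Finally I would identify the dynamics. If $\lims$ is a circle there are no orbifold points, the $G$-action on $\limg$ is free, $\limg$ is the universal cover $\R$ of $\lims$, $G\cong\Z$ acts by translations, and $f$ is an expanding self-covering of $S^1$ of degree $d$, hence topologically conjugate to $z\mapsto z^{\pm d}$. If $\lims$ is an arc, the two critical values of the degree-$d$ map $f$ lie at the two order-$2$ endpoints; here $\limg=\R$ and $G\cong D_\infty$, and by the rigidity of expanding interval maps $f$ is conjugate to the Chebyshev polynomial $T_d$ if it fixes the two endpoint cone points, or to $-T_d$ if it interchanges them, the latter forcing $d$ to be odd (for $d$ even, $-T_d$ is not an orbifold self-covering). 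Since an expanding self-covering of a compact metric space is determined up to topological conjugacy by its iterated monodromy group, and conversely (see \cite{nek:book,nek:models}), $(G,\bim)$ is equivalent to $\img{z^n}$ with $n=\pm d$ and $|n|\ge 2$, to $\img{T_n}$, or to $\img{-T_n}$ with $n$ odd, which is the asserted list.
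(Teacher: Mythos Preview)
Your proposal has a genuine gap at precisely the point you flag as ``the main obstacle.'' You correctly reduce to a virtually free overgroup $\tilde G$ via Theorem~\ref{th:onedimdescription}, but the heart of the argument --- that finite presentability of $G$ forces $\lims$ to be a circle or an arc --- is only sketched heuristically. The suggestion to ``extract an infinite non-redundant family of relators'' from branch points or small essential loops is plausible-sounding but is not a proof, and making it rigorous would require substantial additional work that you do not supply. Your subsequent claim that $\bar\chi(\Gamma_0)=0$ together with all valences $\le 2$ forces $\tilde G\cong\Z$ or $D_\infty$ is also not quite correct as stated: a loop of groups with vertex and edge group both $\Z/2$ has $\bar\chi=0$, valence $2$, and fundamental group $\Z/2\times\Z$.

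The paper bypasses the geometry of $\lims$ entirely and argues group-theoretically. From finite presentability of $G$ and \cite[Proposition~2.13.2]{nek:book}, the kernel of $\tilde G\twoheadrightarrow G$ lies in the kernel of some iterate of the wreath recursion, so $G$ embeds into $S_{d^n}\ltimes\tilde G^{d^n}$; self-replication then embeds a finite-index subgroup of $G$ into $\tilde G^{d^n}$ with each coordinate projection of finite-index image. Since faithful contracting groups contain no non-abelian free subgroups \cite{nek:free}, the virtually free group $\tilde G$ (and hence $G$) must be virtually abelian. One then invokes \cite[Theorem~6.1.6]{nek:book}: for a self-replicating virtually abelian contracting group, $\limg\cong\R^m$ with $G$ acting affinely and cocompactly, which in dimension one forces $G\cong\Z$ or $D_\infty$ acting on $\R$; the expanding endomorphism $x\mapsto dx$ then yields the listed cases. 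This route is short and completely avoids the delicate question of how finite presentability constrains the topology of $\lims$.
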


The iterated monodromy group of $-T_d$ for even $d$ is equivalent to the iterated monodromy group of $T_d$ (since the polynomials are conjugate), so it is not listed in Theorem~\ref{th:onedimfinpres}.

\begin{proof}
By Theorem~\ref{th:onedimdescription}, we may assume, after replacing $\bim$ by an iterate $\bim^{\otimes n}$, that $(G, \bim)$ is the iterated monodromy group of a correspondence $f, \iota:\Gamma_1\arr\Gamma$, where $\Gamma$ is a finite connected graph of groups, $f$ is a covering of graphs of groups, and a morphism $\iota:\Gamma_1\arr\Gamma$ is contracting on the underlying graphs. The fundamental group $\tilde G$ of $\Gamma$ is virtually free, the  associated biset $(\tilde G, \tilde\bim)$ over the fundamental group is contracting, and $(G, \bim)$ is the faithful quotient of $(\tilde G, \tilde\bim)$.

Choose a basis $\alb$ of $\tilde\bim$. It follows from the definition of the associated wreath recursion that if $g\in\tilde G$ is an element of the nucleus of $(\tilde G, \tilde\bim)$ such that its image in $G$ is trivial, then $g$ is conjugate to an element of a vertex group of $\Gamma$. But if we start with a $G$-adapted cover of the limit space of $\limg$ by sufficiently small sets, the isotropy groups of the vertices of the corresponding complex $\Delta_{G, S}$, used in the proof of Theorem~\ref{th:onedimdescription} will be sugroups of $G$, i.e., will be faithfully represented in it. It follows, we may assume that the nucleus of $(\tilde G, \tilde\bim)$ does not contain non-trivial elements of $\tilde G$ that are mapped to trivial elements of $G$.

Suppose that $G$ is finitely presented. Then it follows from~\cite[Proposition~2.13.2]{nek:book} that the kernel of the epimorphism $\tilde G\arr G$ is contained in the kernel of some iteration $\phi^n:\tilde G\arr S_{d^n}\ltimes {\tilde G}^{d^n}$ of the wreath recursion $\phi$. It follows that $G$ is isomorphic to a subgroup of $S_{d^n}\ltimes {\tilde G}^{d^n}$. Since the group is self-replicating, the homomorphism from the stabilizer of any word of length $n$ onto the corresponding factor of ${\tilde G}^{d^n}$ (i.e., the associated virtual endomorphism) is surjective. It follows that a finite index subgroup of $G$ is  embedded into the direct product ${\tilde G}^{d^n}$ so that projection of the image onto each direct factor has finite index in $\tilde G$. But faithful contracting groups have no non-abelian free subgroups, see~\cite{nek:free}. Consequently, both $\wt G$ and $G$ are virtually abelian.

By~\cite[Theorem~6.1.6]{nek:book}, if $(G, \bim)$ is a self-replicating virtually abelian group then there exists a homeomorphism of the limit space $\limg$ with the group $\R^m$ conjugating the action of $G$ on $\limg$ to a proper co-compact action of $G$ on $\R^m$ by affine transformations. It follows that in the one-dimensional case the limit space is $\R$ and $G$ acts on it properly and co-compactly by affine transformations $x\mapsto ax+b$. It follows from properness of the action that $a=\pm 1$. It follows then from properness and cocompactness that the action of $G$ on the limit space is either conjugate to the action of $\Z$ on $\R$ or to the action of the dihedral group $D_\infty$ generated by $x\mapsto -x$ and $x\mapsto 1-x$.

The limit dynamical system is induced then by an expanding automorphism $x\mapsto dx$ of $\R$. We have then four cases: either $G$ is $\Z$ or $D_\infty$, and either $d$ is positive or negative. These cases lead to the examples of the iterated monodromy groups listed in the theorem.
\end{proof}

The wreath recursions for the self-similar groups mentioned in Theorem~\ref{th:onedimfinpres} are as follows. We use the alphabet $\alb=\{0, 1, 2, \ldots, d-1\}$ for $d\ge 2$.

The iterated monodromy group of $z^d$ is generated by one element $a$ satisfying the wreath recursion
\[a=\sigma(1, 1, \ldots, 1, a),\]
where $\sigma$ is the cyclic permutation $(0,1,2,\ldots, d-1)$.

The iterated monodromy group of $z^{-d}$ is generated by 
\[a=\sigma(1, 1, \ldots, 1, a^{-1}).\]

The iterated monodromy group of $T_d$ for even $d$ is generated by 
\[a=\alpha,\qquad b=\beta(a, 1, 1, \ldots, 1, 1, b),\]
where permutations $\alpha, \beta$ are
\[\alpha: 0\leftrightarrow 1, 2\leftrightarrow 3, \ldots (d-2)\leftrightarrow (d-1)\]
and
\[\beta: 1\leftrightarrow 2, 3\leftrightarrow 4, \ldots, (d-3)\leftrightarrow (d-2).\]

The iterated monodromy group of $T_d$ for odd $d$ is generated by 
\[a=\alpha(a, 1, 1, \ldots, 1),\qquad b=\beta(1, 1, \ldots, 1, b),\]
where
\[\alpha: 1\leftrightarrow 2, 3\leftrightarrow 4, \ldots, (d-2)\leftrightarrow (d-1)\]
and
\[\beta: 0\leftrightarrow 1, 2\leftrightarrow 3, \ldots, (d-3)\leftrightarrow (d-2).\]

The iterated monodromy group of $-T_d$ for odd $d$ is generated by 
\[a=\alpha(b, 1, 1, \ldots, 1),\qquad b=\beta(1, 1, \ldots, 1, a)\]
for the same permutations $\alpha$ and $\beta$.

\section{Examples}

\subsection{Groups generated by automata of polynomial activity growth}

Let $g$ be an automorphism of the tree $\xs$. Consider the \emph{activity growth function}
\[\alpha_g(n)=|\{v\in\alb^n\;:\;g|_v\ne 1\}|.\]

We say that $g$ is \emph{finite-state} if the set $\{g|_v\;:\;v\in\xs\}$ of all its sections is finite. The set of finite-state automorphisms $g$ of $\xs$ such that  $\alpha_g(n)$ is bounded by a polynomial of degree $d$ is a subgroup of the automorphism group of the tree $\xs$, see~\cite{sid:cycl}. We will denote it $\mathcal{P}_d(\alb)$.

A \emph{non-trivial cycle} (of the Moore diagram) of $g$ is a set of the form $\{g|_v=g|_{vx_1x_2\ldots x_k}, g|_{vx_1}, g|_{vx_1x_2}, \ldots, g|_{vx_1x_2\ldots x_{k-1}}\}$ of non-trivial sections of $g$.

An automorphism $g$ of $\xs$ belongs to $\mathcal{P}_d(\alb)$ if and only if the cycles in the Moore diagram of $g$ passing through non-trivial elements are disjoint and the maximal length of a sequence of cycles $C_1, C_2, \ldots, C_k$
such that elements of $C_i$ are sections of the elements of $C_{i-1}$ for every $i$ is $d+1$.

Suppose that $G$ is a finitely generated self-similar subgroup of $\mathcal{P}_d(\alb)$. Let $S$ be a finite generating set of $G$ such that $g|_x\in S$ for all $g\in S$ and $x\in\alb$, which exists since all elements of $\mathcal{P}_d(\alb)$ are assumed to be finite-state. Let $N$ be a number divisible by the lengths of all non-trivial cycles of all elements of $S$. Let $C$ be the set of words $v\in\alb^N$ such that $g|_v=g$ for some $g\in S$, and let $\mathcal{C}$ be the set of subtrees $v\xs$ for $v\in C$. For every $v\in C$ and $g\in S$ such that $g|_v=g$, consider the isomorphism $v\xs\arr g(v)\xs$ induced by $g$ (i.e., the map $vw\mapsto g(vw)=g(v)g|_v(w)$). Let $\mathcal{G}$ be the groupoid generated by these isomorphisms. The following theorem is proved in~\cite{nekpilgrimthurston}.

\begin{theorem}
\label{th:pold}
The group $G$ is contracting if and only if the groupoid $\mathcal{G}$ is finite. Moreover, the nucleus is equal to the set of sections $g|_v$ for the elements $g:v\xs\arr g(v)\xs$ of $\mathcal{G}$.

A contracting self-similar group $G$ is a subgroup of $\mathcal{P}_d(\alb)$ for some $d$ if and only if the boundary of the tile $\til\subset\limg$ is countable.
\end{theorem}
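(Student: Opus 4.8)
The plan is to read both assertions off the Moore diagram of the nucleus, using the standing hypothesis $G\le\mathcal P_d(\alb)$ to control its cycle structure. Fix the generating set $S$ with $S|_\alb\subseteq S$ and the integer $N$ divisible by the lengths of all non-trivial cycles of the elements of $S$, as in the statement. The technical heart of the first assertion is a block-stabilisation lemma: for $g\in S$ and a word $v=v_1v_2\cdots v_k$ cut into blocks $v_i\in\alb^N$, the section $g|_{v_1\cdots v_k}$ is, once $k$ is large enough, either trivial or lies on a non-trivial cycle of the Moore diagram of $g$; and since $N$ is a common multiple of cycle lengths, such a stable section $h$ satisfies $h|_u=h$ for a suitable $u\in\alb^N$, so $u\in C$ and $h$ is one of the elements labelling a generator of $\mathcal G$. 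This is exactly where the $\mathcal P_d$ structure enters: the non-trivial cycles of an element of $\mathcal P_d$ are pairwise disjoint and chains of cycles have length at most $d+1$, so along any path the section drops to a strictly deeper cycle only boundedly often (at most $d+1$ times) before stabilising.

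Granting this, consider $g=s_1\cdots s_m\in G$ and the formula $(s_1\cdots s_m)|_v=\prod_i s_i|_{w_i}$, which writes $g|_v$ as a product of sections of generators. For $v$ long and cut into $N$-blocks, each factor is by the lemma either trivial or a stable cycle element, and the cycle elements appearing chain up, by construction, into a composable string of the isomorphisms $v\xs\arr g(v)\xs$ generating $\mathcal G$; hence every sufficiently deep section $g|_v$ is a section of an element of $\mathcal G$. If $\mathcal G$ is finite then, since each of its elements is finite-state (being the restriction of an element of $G\le\mathcal P_d$ to a subtree), the set $\{\,g|_u : g \text{ ranges over the elements of }\mathcal G\,\}$ is finite; one checks it is closed under taking sections and absorbs multiplication by $S$, so $G$ is contracting with nucleus contained in this set, and minimality upgrades this to equality once one verifies that each such section does occur as a deep section of a genuine element of $G$. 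Conversely, if $G$ is contracting then $\nuke$ is finite, and an element of $\mathcal G$, being a partial automorphism $v_1w\mapsto g_k(v_k)\,(g_k\cdots g_1)(w)$ determined up to finitely many choices by its sections on bounded-length words (which all lie in the finite set $\nuke$), can take only finitely many values; so $\mathcal G$ is finite.

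For the second assertion, recall from the synthetic model of $\limg$ and \cite[Section~3.3.2]{nek:book} that a point of the boundary $\partial\til$ is represented by a sequence $\ldots x_2x_1\in\xmo$ for which there is a sequence $(g_n)$ of nucleus elements with $g_n\cdot x_n=y_n\cdot g_{n-1}$ in $\bim$ for all $n$ and $g_0\ne 1$; this is exactly the data of a backward-infinite path in the Moore diagram of $\nuke$ that never enters the trivial state, with input label sequence $\ldots x_2x_1$. Let $P\subseteq\xmo$ be the set of such input sequences; it is a (sofic) subshift, and the natural map $\xmo\arr\limg$ is finite-to-one onto the tile $\til$, so $\partial\til$ is countable if and only if $P$ is. Since $h|_u$ is determined by $u$, two distinct non-trivial cycles through a common state of the Moore diagram of $\nuke$ carry distinct input words; hence $P$ is countable precisely when every strongly connected component of the non-trivial part of that diagram is a single cycle — in which case every backward-infinite non-trivial path is eventually periodic — and otherwise $P$, and therefore $\partial\til$, contains an embedded Cantor set obtained by interleaving two such cycle words. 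By the cycle characterisation of $\mathcal P_d(\alb)$ recalled above, ``every non-trivial component is a single cycle'' means exactly $\nuke\subseteq\mathcal P_d(\alb)$ for a suitable $d$; and for a contracting group this is equivalent to $G\subseteq\mathcal P_{d'}(\alb)$ for some $d'$, since every $g\in G$ has all deep sections in $\nuke$, whence $\alpha_g(n)=O(n^{d})$ when $\nuke\subseteq\mathcal P_d(\alb)$. Combining the equivalences gives the statement.

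The hard part will be the passage, in the last paragraph, from ``$P$ is uncountable'' to ``$\partial\til$ is uncountable'', i.e.\ making sure asymptotic equivalence does not collapse the Cantor family of sequences $\ldots x_2x_1$ arising from a bad component to a countable set. This is precisely where finiteness of $\nuke$ is indispensable: the asymptotic equivalence relation on $\xmo$ has uniformly bounded classes (equivalently, the tiling of $\limg$ by translates of $\til$ has bounded multiplicity), so two of the interleaved sequences, which differ along infinitely many blocks, cannot be equivalent. The remaining ingredients — the block-stabilisation lemma and the activity comparison between $G$ and $\nuke$ — are routine once the disjoint-cycle structure of $\mathcal P_d(\alb)$ is in hand.
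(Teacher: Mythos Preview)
First, note that the paper does not actually prove Theorem~\ref{th:pold}: it is imported wholesale from \cite{nekpilgrimthurston}, so there is no in-paper proof to compare against. That said, your sketch has a genuine gap in the forward implication of the first assertion.

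Your block-stabilisation lemma is false as formulated. You claim that for $g\in S$ and $v=v_1\cdots v_k$ with $v_i\in\alb^N$, the section $g|_{v_1\cdots v_k}$ is, for $k$ large, either trivial or on a non-trivial cycle. But the $\mathcal P_d$ structure only says that along any path the section can \emph{drop} between cycles at most $d+1$ times; it does not say the section is on a cycle at the terminal step. Concretely: if $h\in S$ satisfies $h|_u=h$ for one $u\in\alb^N$ but $h|_{u'}=h'$ is a non-trivial transient state for some other $u'\in\alb^N$, then $g|_{u^{k-1}u'}=h'$ is transient for every $k$. So the terminal section need not be on a cycle, and your subsequent ``chaining'' step --- that the non-trivial factors $s_i|_{w_i}$ of $g|_v$ form a composable string of generators of $\mathcal G$ --- has no foundation. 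Composability requires the cycle word of each factor to be the image, under the previous factor, of the previous cycle word; nothing in your argument arranges this. The honest fix is to work with the tuple $(s_1|_{w_1},\ldots,s_m|_{w_m})$ as a state of a product automaton and analyse its recurrent part, or equivalently to show directly that the set of acting elements of $\mathcal G$ is section-closed and absorbs $S$ after finitely many $N$-blocks; either route needs real work beyond what you wrote.

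In the converse direction your argument is recoverable but misstated. The clean observation you are missing is that the acting element $h=g_k\cdots g_1$ of any element of $\mathcal G$ satisfies $h|_{v_1}=h$ by a one-line induction (since $g_i|_{v_i}=g_i$ and $g_i(v_i)=v_{i+1}$), so $h$ is a deep section of itself and hence lies in $\nuke$ once $G$ is contracting; finiteness of $\mathcal G$ is then immediate. Your phrase ``determined by its sections on bounded-length words (which all lie in $\nuke$)'' does not by itself bound $h$.

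Your argument for the second assertion --- identifying $\partial\til$ with left-infinite non-trivial paths in the Moore diagram of $\nuke$, and using the dichotomy ``every non-trivial strongly connected component is a single cycle'' versus ``some component supports a full shift'' --- is the right one and is essentially complete.
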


The smallest $d$ for which $G$ is contained in $\mathcal{P}_d(\alb)$ is equal to the Cantor-Bendixson rank of $\partial\til$ minus one. For example, $G\le\mathcal{P}_0(\alb)$ if and only if $\partial\til$ is finite. The group is contained in $\mathcal{P}_1(\alb)$ if and only if $\partial\til$ has a finite number of non-isolated points. All self-similar subgroups of $\mathcal{P}_0(\alb)$ are contracting (follows from Theorem~\ref{th:pold}, see also~\cite{bondnek}). On the other hand, $\mathcal{P}_1(\alb)$ has non-contracting self-similar subgroups, for example the ``long range'' group generated by
\[a=\sigma(1, a),\qquad b=(a, b).\]

It follows from the second paragraph of Theorem~\ref{th:pold} that the limit space of a contracting subgroup of $\mathcal{P}_d(\alb)$ is one-dimensional.

As an example of a self-similar subgroup of $\mathcal{P}_0(\alb)$, consider the Hanoi Tower group (see~\cite{grisunik:hanoi}) generated by
\begin{align*}
a &=(01)(1, 1, a),\\
b &=(02)(1, b, 1),\\
c &=(12)(1, 1, c)
\end{align*}

Let us apply Theorem~\ref{th:contractingmodeldim} to it. Let $\Gamma$ be the graph of groups equal to the tripod with the vertex groups of the feet $\Z/2\Z$ and all the other vertex and edge groups trivial (see the left-hand side half of Figure~\ref{fig:sierpinski}). Let $\Gamma_1$ be the graph of groups shown on the right-hand side half of Figure~\ref{fig:sierpinski} also with $\Z/2\Z$ at the vertices of valency 1 and trivial everywhere else. We have a natural covering map $f:\Gamma_1\arr\Gamma$ folding $\Gamma_1$ at the vertices of valency 2 and mapping the vertices $A_0, B_1, C_2$ and $A, B, C$ to $A, B, C$, respectively. Let $\iota:\Gamma_1\arr\Gamma$ be the map multiplying the lengths of all edges by $1/2$, mapping the vertices $A, B, C$ of valency 2 to the center of the tripod $\Gamma$, the vertices of degree 3 to the midpoints of the legs of the tripod, and the vertices $A_0, B_1, C_2$ of valency 1 to its feet $A, B, C$, respectively. 

\begin{figure}
\centering
\includegraphics{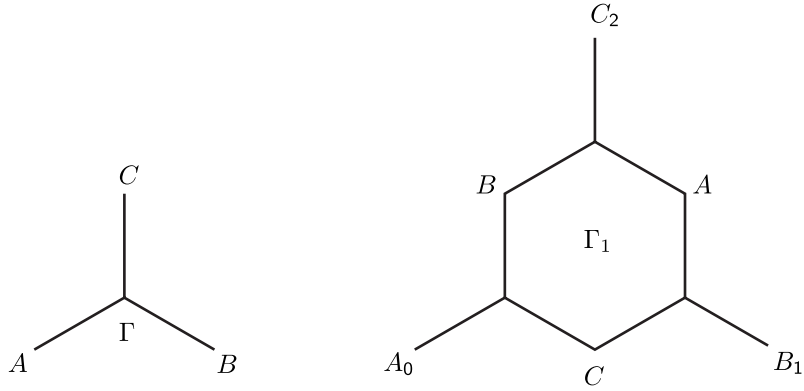} 
\caption{Models of the Hanoi tower group}
\label{fig:sierpinski}
\end{figure}

It is checked directly that the iterated monodromy group of the correspondence $f, \iota:\Gamma_1\arr\Gamma$ is the group $\langle a, b, c\rangle$ defined above. Since $\iota$ is contracting, the limit space of this group has topological dimension 1. In fact, it is the classical Sierpinski gasket (triangle). See an iteration $\Gamma_5$ of the correspondence $f, \iota$ on Figure~\ref{fig:hanoiiteration}.

\begin{figure}
\centering
\includegraphics{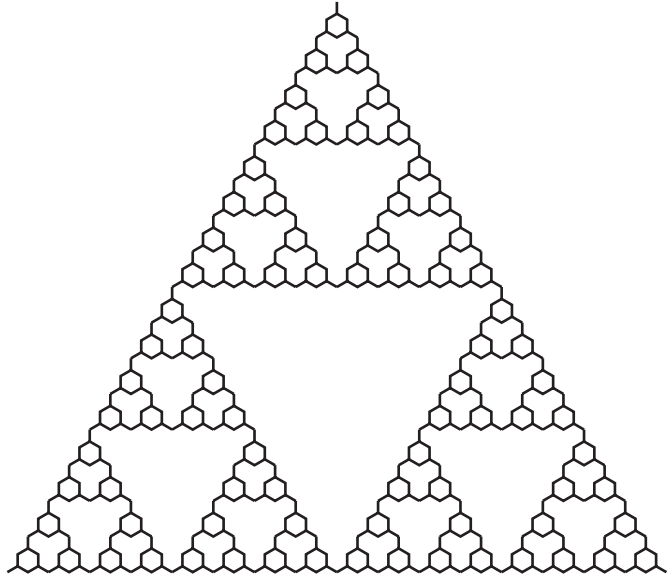}
\caption{Sierpinski gasket}
\label{fig:hanoiiteration}
\end{figure}

Let us apply Theorem~\ref{th:dad}. Figure~\ref{fig:hanoischr} shows the graph of the action on the second level of the tree. Since all generators are involutions, we do not orient the edges.

\begin{figure}
\centering
\includegraphics{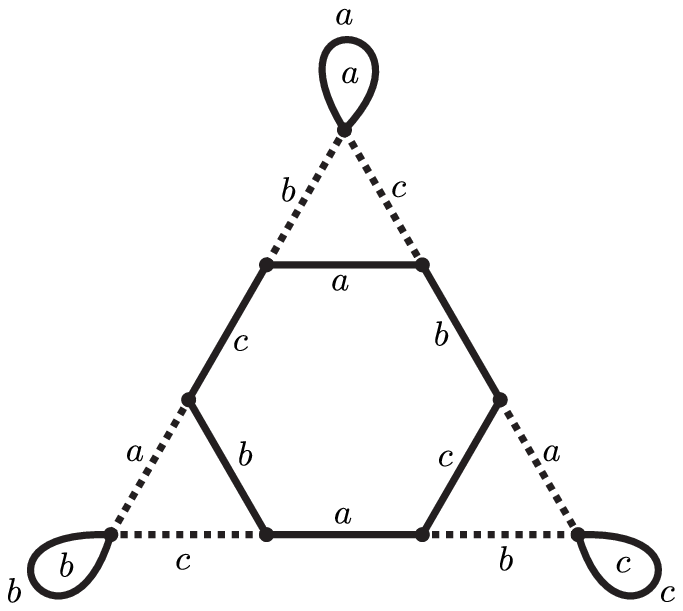}
\caption{The action on the second level}
\label{fig:hanoischr}
\end{figure}

Let us partition the second level $\{0, 1, 2\}^2$ into two parts: $\{00, 11, 22\}$ and the rest. We have marked the edges connecting vertices from different parts by dashed lines on Figure~\ref{fig:hanoischr}. The sections of the generators in the words $\{0, 1, 2,\}^2\setminus\{00, 11, 22\}$ are trivial, so the isomorphism between the corresponding subtrees are of the form $x_1x_2w\mapsto y_1y_2w$, and we get therefore a finite groupoid. The vertices $00$, $11$, and $22$ are not connected by the generators, and since $a, b, c$ are of order two, the conditions of Theorem~\ref{th:dad} are satisfied for this partition, so the limit space is one-dimensional.

As an example of a contracting subgroup of $\mathcal{P}_1(\alb)$, consider the iterated monodromy group of $1-\frac{3+\sqrt{5}}{2z^2}$ generated by
\[a=\sigma(c, b^{-1}),\quad b=(a, 1),\quad c=\sigma(1, c^{-1}).\]

See its limit space $\lims$, which is homeomorhic to the Julia set of $1-\frac{3+\sqrt{5}}{2z^2}$ on Figure~\ref{fig:lineargr}.

\begin{figure}
\centering
\includegraphics{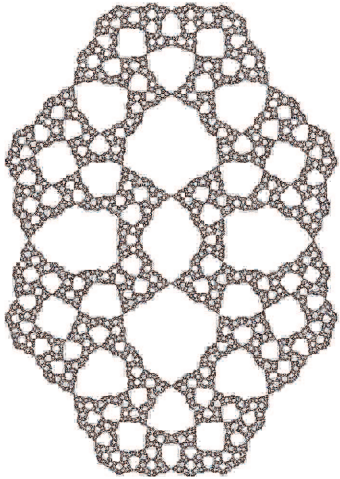}
\caption{The Julia set of $1-\frac{3+\sqrt{5}}{2z^2}$}
\label{fig:lineargr}
\end{figure}



%
%

\subsection{Locally disconnected examples}

Consider the self-similar action of $\Z$ generated by
\[a=(012)(1, 1, a^2).\]
The associated virtual endomorphism is $a^n\mapsto a^{2n/3}$ with the domain equal to $3\Z$. It is contracting, but not self-replicating, since the range of the virtual endomorphism is $2\Z$.

As an example of a contracting model, we can take $\Gamma_0, \Gamma_1=\R/\Z$, the covering $f:x\mapsto 3x$ and the reduction map $\iota:x\mapsto 2x$. We introduce the metric on $\Gamma_1$ with respect to which $f$ is a local isometry, so that the length of the circle $\Gamma_1$ is 3. Then $\iota$ locally multiplies the metric by $2/3$, i.e., is contracting. It is checked directly that the iterated monodromy group of the correspondence $f, \iota:\Gamma_1\arr\Gamma_0$ is given by the above wreath recursion.

The limit space of the group $\langle a\rangle$ is the inverse limit of the circles $\R/\Z$ with respect to the angle doubling maps $\iota_n:x\mapsto 2x$, i.e., the classical binary solenoid.

As another disconnected example, consider the \emph{universal Grigorchuk group} $G$ generated by the following elements acting on $\xs=\{1, 2, 3, 4, 5, 6\}^*$:
\begin{align*}
a &= (12)(34)(56),\\
b &= (a, b, a, b, 1, b),\\
c &= (a, c, 1, c, a, c),\\
d &= (1, d, a, d, a, d).
\end{align*}

It is checked directly that the wreath recursion is contracting for the free product $\langle a\rangle*\langle b, c, d\rangle\cong(\Z/2\Z)*(\Z/2\Z)^2$.

The action of the group on the tree $\xs$ is not level-transitive. For any choice of a sequence $A_n\in\{\{1, 2\}, \{3, 4\}, \{5, 6\}\}$, the binary subtree $A_1\cup A_1\times A_2\cup A_1\times A_2\times A_3\cup\ldots$ is $G$-invariant, and the action of $G$ on it is level-transitive. If we restrict $G$ to this subtree, and take the quotient of $G$ by the kernel of the action, we get one of the \emph{Grigorchuk groups} from~\cite{grigorchuk:growth_en}. For example, if we choose $(A_1, A_2, \ldots)$ to be the periodic sequence $\{1, 2\}, \{3, 4\}, \{5, 6\}, \{1, 2\}, \{3, 4\}, \{5, 6\}, \ldots$, we get the famous \emph{first Grigorchuk group} from~\cite{grigorchuk:80_en}.

A model of the group $G$ consists of the segment of groups $\Gamma_0$   with one vertex group $\Z/2\Z$ identified with $\{1, a\}$ and the other vertex group $(\Z/2\Z)^2$ identified with the subgroup $\{1, b, c, d\}$ of $G$. The graph $\Gamma_1$ consists of three disjoint segments, each of them with two vertex groups $\{1, b, c, d\}$ at the ends. The covering $f:\Gamma_1\arr\Gamma_0$ folds each of the segments in two, maps the folding point to the end with vertex group $\Z/2\Z$, and vertex groups of the endpoints identically to the vertex group $(\Z/2\Z)^2$. The reduction $\iota$ maps each of the segments homeomorphically onto $\Gamma_0$, maps one of the vertex groups identically to $(\Z/2\Z)^2$, and maps the other one surjectively onto $\Z/2\Z$ by each of the three maps (one for each connected component of $\Gamma_1$):
\begin{gather*}
\iota_0:b\mapsto a,\quad c\mapsto a,\quad d\mapsto 1;\\
\iota_1:b\mapsto a,\quad c\mapsto 1,\quad d\mapsto a;\\
\iota_2:b\mapsto 1,\quad c\mapsto a,\quad d\mapsto a.
\end{gather*}
Here the covering $f$ is a local isometry, so the connected components of $\Gamma_1$ have lengths equal to 2, and the reduction map $\iota$ locally divides the distances by 2.

The limit space $\lims$ is homeomorphic to the direct product of a segment with the Cantor set.

\subsection{Sierpi\'nski carpet}

The classical Sierpi\'nski carpet can be realized as the limit space of the  group generated by the automaton whose dual Moore diagram is shown on Figure~\ref{fig:carpet}

\begin{figure}
\centering
\includegraphics{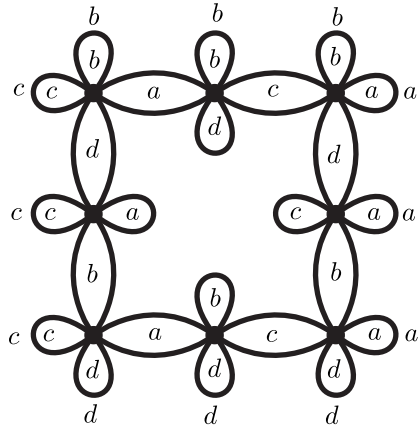}
\caption{Sierpinski carpet automaton}
\label{fig:carpet}
\end{figure}

The corresponding wreath recursion is
\begin{align*}
a&=(12)(67)(1, 1, a, 1, a, 1, 1, a)\\
b&=(46)(58)(b, b, b, 1, 1, 1, 1, 1)\\
c&=(23)(78)(c, 1, 1, c, 1, c, 1, 1)\\
d&=(14)(35)(1, 1, 1, 1, 1, d, d, d)
\end{align*}

It is easy to see that the generators $a, b, c, d$ are of order 2. The limit space of this group is the classical Sierpinski carpet. The limit dynamical system folds it naturally and then stretches it to the original carpet, so that the corner squares are mapped onto the whole carpet by an orientation preserving similarity, while the other four squares are mapped to it by an orientation reversing similarities.

\begin{figure}
\centering
\includegraphics{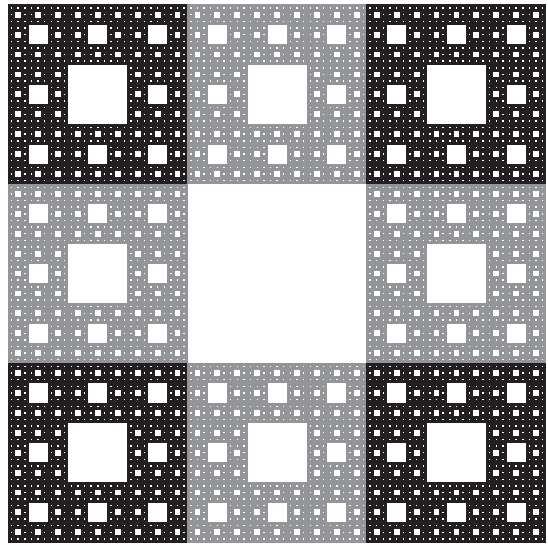}
\caption{Sierpinski carpet}
\label{fig:scarpet}
\end{figure}

The wreath recursion for the element $ab$ is shown as a dual Moore diagram on Figure~\ref{fig:ab}.

\begin{figure}
\centering
\includegraphics{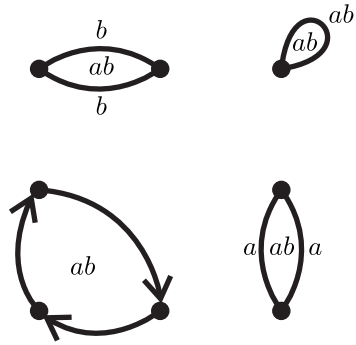}
\caption{The automaton for $ab$}
\label{fig:ab}
\end{figure}

It follows from the recursion that $ab$ is an element of order 6. Consequently, $\langle a, b\rangle$ is the dihedral group $D_6$ of order 12. Same is true for the groups $\langle b, c\rangle$, $\langle c, d\rangle$, and $\langle a, d\rangle$.


Consequently, the partition of the first level into two sets shown on Figure~\ref{fig:partition} satisfies the conditions of Theorem~\ref{th:dad}, which implies that the limit space of this group has topological dimension 1.

\begin{figure}
\centering
\includegraphics{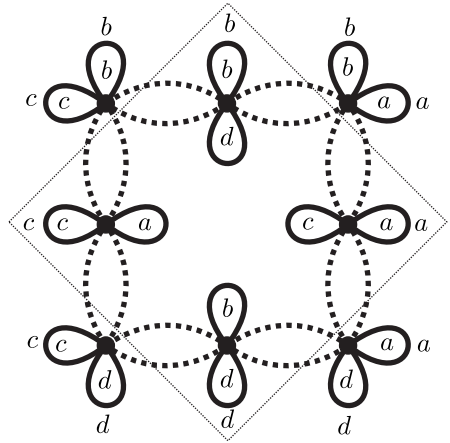}
\caption{A partition of the first level}
\label{fig:partition}
\end{figure}

Let us illustrate Theorem~\ref{th:contractingmodeldim} by this example. We will construct a contracting model of the second iteration of the limit dynamical system. Define $\Gamma_0$ as the graph of groups consisting of a cycle of length four with the vertex groups $\langle a, b\rangle$, $\langle a, d\rangle$, $\langle c, d\rangle$, and $\langle b, c\rangle$, and four edge groups $\langle a\rangle$, $\langle b\rangle$, $\langle c\rangle$, $\langle d\rangle$, with the  identical embeddings. By imitating the second iteration of the limit dynamical system, we get a covering graph $\Gamma_2$  shown on Figure~\ref{fig:model}.

\begin{figure}
\centering
\includegraphics{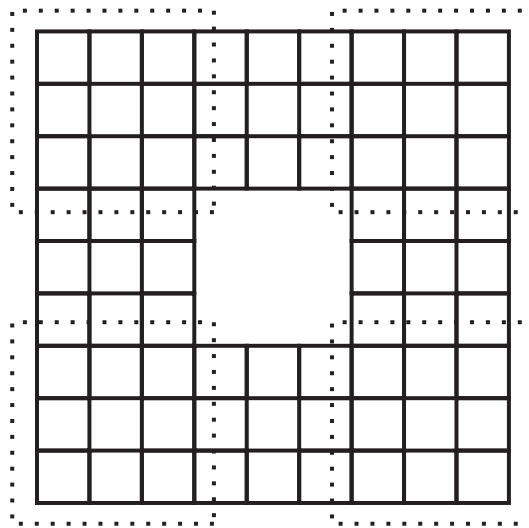}
\caption{Contracting model}
\label{fig:model}
\end{figure}

A contracting map $\iota$ will map the four $3\times 3$ corners to the vertices of $\Gamma_0$, the four $3\times 3$ squares between them to the corresponding edges. We will get a map multiplying the lengths of the edges of $\Gamma_2$ by $1/3$ or $0$. It is checked directly that, with appropriate structure of a graph of groups on $\Gamma_2$, the iterated monodromy group of the described correspondence is $\langle a, b, c, d\rangle$. Theorem~\ref{th:contractingmodeldim} shows then that the limit space has topological dimension 1. The model also implies that the wreath recursion is contracting on the fundamental group of the graph of groups $\Gamma_0$, which is virtually free as the fundamental group of a finite graph of groups with finite vertex groups.

\providecommand{\bysame}{\leavevmode\hbox to3em{\hrulefill}\thinspace}
\providecommand{\MR}{\relax\ifhmode\unskip\space\fi MR }
\providecommand{\MRhref}[2]{%
  \href{http://www.ams.org/mathscinet-getitem?mr=#1}{#2}
}
\providecommand{\href}[2]{#2}

\end{document}